\definecolor{airforceblue}{rgb}{0.36, 0.54, 0.66}
\definecolor{antiquebrass}{rgb}{0.8, 0.58, 0.46}
\definecolor{applegreen}{rgb}{0.55, 0.71, 0.0}
\definecolor{amber(sae/ece)}{rgb}{1.0, 0.49, 0.0}
\definecolor{arsenic}{rgb}{0.23, 0.27, 0.29}
\definecolor{armygreen}{rgb}{0.29, 0.33, 0.13}
\definecolor{ashgray}{rgb}{0.7, 0.75, 0.71}
\definecolor{auburn}{rgb}{0.43, 0.21, 0.1}
\definecolor{aurometalsaurus}{rgb}{0.43, 0.5, 0.5}
\definecolor{babyblue}{rgb}{0.54, 0.81, 0.94}
\definecolor{azure(web)(azuremist)}{rgb}{0.94, 1.0, 1.0}
\definecolor{babyblue}{rgb}{0.54, 0.81, 0.94}
\definecolor{bole}{rgb}{0.47, 0.27, 0.23}
\definecolor{brightgreen}{rgb}{0.4, 1.0, 0.0}
\definecolor{britishracinggreen}{rgb}{0.0, 0.26, 0.15}
\definecolor{bulgarianrose}{rgb}{0.28, 0.02, 0.03}
\definecolor{cadmiumyellow}{rgb}{1.0, 0.96, 0.0}
\definecolor{carolinablue}{rgb}{0.6, 0.73, 0.89}
\definecolor{chartreuse(traditional)}{rgb}{0.87, 1.0, 0.0}
\definecolor{chartreuse(web)}{rgb}{0.5, 1.0, 0.0}
\definecolor{cinereous}{rgb}{0.6, 0.51, 0.48}
\definecolor{coolblack}{rgb}{0.0, 0.18, 0.39}
\definecolor{deepskyblue}{rgb}{0.0, 0.75, 1.0}
\definecolor{darkpastelblue}{rgb}{0.47, 0.62, 0.8}
\definecolor{ceil}{rgb}{0.57, 0.63, 0.81}
\definecolor{cadetgray}{rgb}{0.57, 0.64, 0.69}
\definecolor{bluedefrance}{rgb}{0.19, 0.55, 0.91}
\crefname{hypothesis}{Hypothesis}{Hypotheses}
\newcommand*{\Scale}[2][4]{\scalebox{#1}{$#2$}}%
\newcommand{\RR}{\mathbb{R}}
\Crefname{ALC@unique}{Line}{Lines}
\colorlet{texcscolor}{blue!50!black}
\colorlet{texemcolor}{red!70!black}
\colorlet{texpreamble}{red!70!black}
\colorlet{codebackground}{black!25!white!25}
\newcommand{\HP}[1]{{\color{blue}HP: #1}}
\pgfplotsset{compat=1.18} 
\newcommand{\hiddencomment}[1]{}
\lstdefinestyle{siamlatex}{%
  style=tcblatex,
  texcsstyle=*\color{texcscolor},
  texcsstyle=[2]\color{texemcolor},
  keywordstyle=[2]\color{texemcolor},
  moretexcs={cref,Cref,maketitle,mathcal,text,headers,email,url},
}
\DeclareTotalTCBox{\code}{ v O{} }
{ 
  fontupper=\ttfamily\color{black},
  nobeforeafter,
  tcbox raise base,
  colback=codebackground,colframe=white,
  top=0pt,bottom=0pt,left=0mm,right=0mm,
  leftrule=0pt,rightrule=0pt,toprule=0mm,bottomrule=0mm,
  boxsep=0.5mm,
  #2}{#1}
\patchcmd\newpage{\vfil}{}{}{}
\title{Area preserving Combescure transformations\thanks{\vspace{-0.3cm}
\funding{This research has been supported by KAUST baseline funding (grant BAS/1/1679-01-01).}}}
\author{O. Pirahmad\thanks{CEMSE, King Abdullah University of Science and Technology (\email{pirahmad.olimjoni@kaust.edu.sa, helmut.pottmann@gmail.com, mikhail.skopenkov@gmail.com}).} \and H. Pottmann\footnotemark[2]
\and M. Skopenkov\footnotemark[2]}
\begin{document}
\maketitle

\begin{tcbverbatimwrite}{tmp_\jobname_abstract.tex}
\begin{abstract}
Motivated by the design of flexible nets, we classify all nets of arbitrary size $m\times n$
that admit a continuous family of area-preserving Combescure transformations. There are just two different classes. The nets in the first class are special cases of cone nets that
have been recently studied by Kilian, M\"uller, and Tervooren. The second class consists of K{\oe}nigs nets having a Christoffel dual with the same areas of corresponding faces. We apply isotropic metric duality to get a new class of flexible nets in isotropic geometry. We also study the smooth analogs of the introduced classes. 
\end{abstract}

\begin{keywords}
Area preserving Combescure transformation, Deformation, Cone-net, Flexible mesh, Christoffel dual
\end{keywords}

\begin{MSCcodes}
53A70, 53A05, 52C25, 53A35
\end{MSCcodes}
\end{tcbverbatimwrite}
\input{tmp_\jobname_abstract.tex}


\section{Introduction}
\label{sec:intro}

The motivation for our paper originates from the search
for flexible quadrilateral meshes. Considering the faces as
rigid bodies and the edges where two faces meet as hinges, such meshes
allow for a continuous flexion. In other words, they are mechanisms. 
Very recently, there has been growing interest in this area, with applications
in rigid origami and transformable design \cite{aikyn2024flexible,chen2015,dang-feng,feng-dang,evans2015rigidly,lewis2016,Nawratil2022,t-nets-IASS,Tac09a}. In our paper, we suggest a new approach to the problem. 

We restrict to quad meshes of regular combinatorics and call them (discrete) \emph{nets} in the following. The flexibility is interesting to study starting from nets with $3 \times 3$ faces.
The flexible $3 \times 3$ Q-nets (meshes with planar faces) have been classified in a seminal 
paper by Izmestiev  \cite{izmestiev-2007}, which revealed a big
variety of types. Well-known examples
of flexible Q-nets of arbitrary size $m \times n$ are the Voss nets and T-nets, treated in detail by R.~Sauer \cite{sauer:1970}.
Very recently, Nawratil \cite{Nawratil2024} presented another special class of flexible Q-nets which he calls P-nets. They may be seen as a generalization
of rotational nets, can be easily designed, but cannot provide a large variety of shapes. 
Further important recent progress is due to He and Guest \cite{he2020rigid} who have been able to patch other types of $3 \times 3$ nets together
to larger flexible Q-nets. However, this is not yet a method for the design of flexible
structures, since the result is hardly predictable by the provided generation process. 


In view of the difficulties which arise in Euclidean geometry,
we simplify the problem and turn our interest to {isotropic geometry}, which may be seen
as a \emph{structure-preserving degeneration} 
of Euclidean one. 
We hope that the examples found in isotropic geometry 
can then be used as a good initialization to construct Euclidean ones 
via numerical optimization. This approach has been (implicitly) used since as early as the work \cite{Muntz} by M\"untz from 1911,  who solved the Plateau problem for Euclidean minimal surfaces in a quite general setup by deformation of isotropic minimal surfaces (graphs of harmonic functions). Another successful example of a structure-preserving degeneration is tropical~geometry.

The geometry
in \emph{isotropic 3-space} $I^3$ is based on
a 6-parametric group of affine transformations in $\RR^3$ which preserve the isotropic semi-norm  $\|(x,y,z)\|_i:=\sqrt{x^2+y^2}$. This geometry has been 
systematically developed by K.~Strubecker \cite{strubecker1,strubecker2,strubecker3}. A detailed treatment is found in the monograph by H.~Sachs \cite{Sachs:1990}. 
The geometry in $I^3$ is not as degenerate as it may appear by just looking at the
metric. One can come up with so-called replacing invariants and obtains beautiful
counterparts to Euclidean results. For example,
one finds a definition of isotropic Gaussian curvature $K_i$  of a surface that has properties
very similar to the familiar Euclidean counterpart.  However, pursuing the Riemannian approach based on the isotropic metric, $K_i$
would vanish everywhere.   Isotropic curvature theory of surfaces
defines a
counterpart to the Euclidean shape operator via an isotropic Gauss map that maps
a surface via parallel tangent planes to the parabolic 
isotropic unit sphere $S_i^2: 2z=x^2+y^2$. If the surface
is 
a function graph  $z=f(x,y)$,  then $K_i$ is the determinant of the Hessian, i.e., 
$K_i=f_{xx}f_{yy}-f_{xy}^2$.

Since flexible nets may be seen as discrete versions of continuous isometric deformations of a surface, one needs a proper definition of isometric surfaces in $I^3$,
which has been missing until very recently. Requiring the preservation of the lengths of surface curves \emph{and} of
isotropic Gaussian curvature during an isometric deformation, one obtains
the desired non-degenerate analogue to the Euclidean case \cite{isometric-isotropic}.
Unlike Euclidean geometry, isotropic geometry possesses a metric duality,
realized via the polarity $\delta$ with respect to $S_i^2$. It has been shown \cite{isometric-isotropic} that $\delta$ transforms a pair of isometric surfaces
in $I^3$ to a pair of surfaces which are related by an \emph{area-preserving Combescure
transformation}. In the smooth setting, two surfaces $f(u,v)$ and $f^+(u,v)$
are related by a Combescure transformation (C-trafo), if corresponding tangent
vectors $f_u,f^+_u$ and $f_v,f^+_v$ of parameter lines are parallel. 


\begin{figure}[!t]
    \centering
    \includegraphics[scale=0.04]
{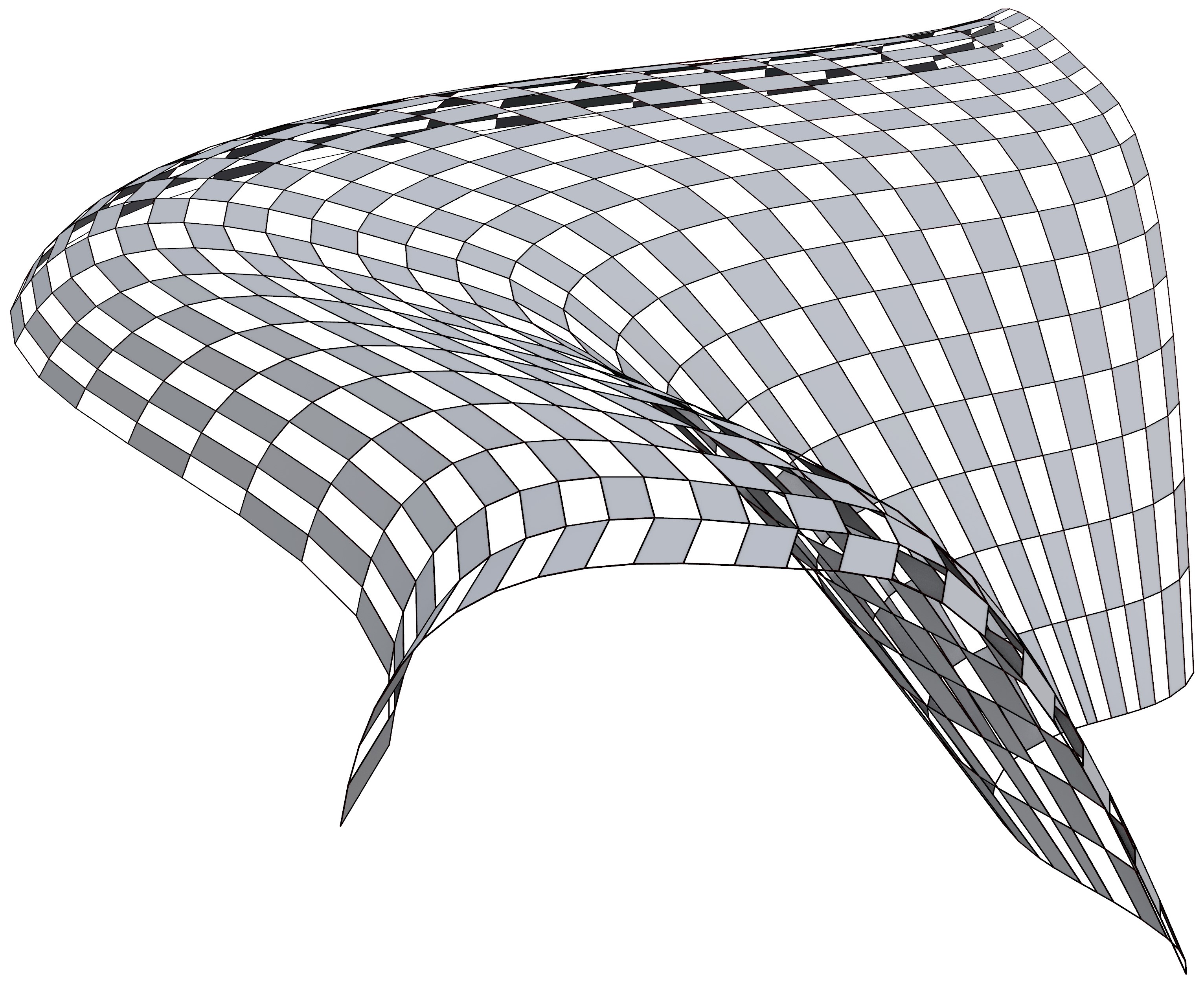}\qquad\qquad\qquad\includegraphics[scale=0.04]
{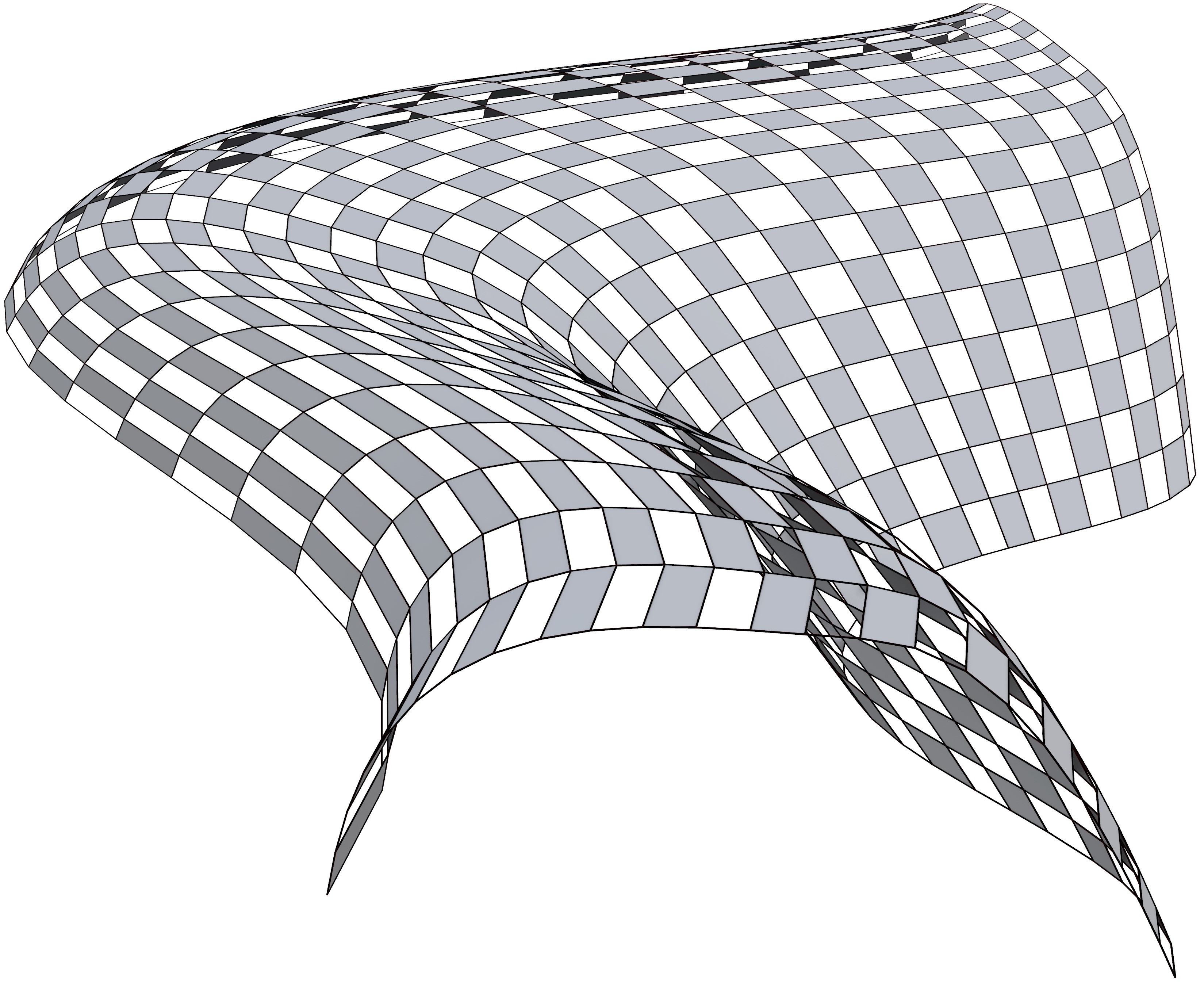}
    \quad\quad\quad\includegraphics[scale=0.04]
    {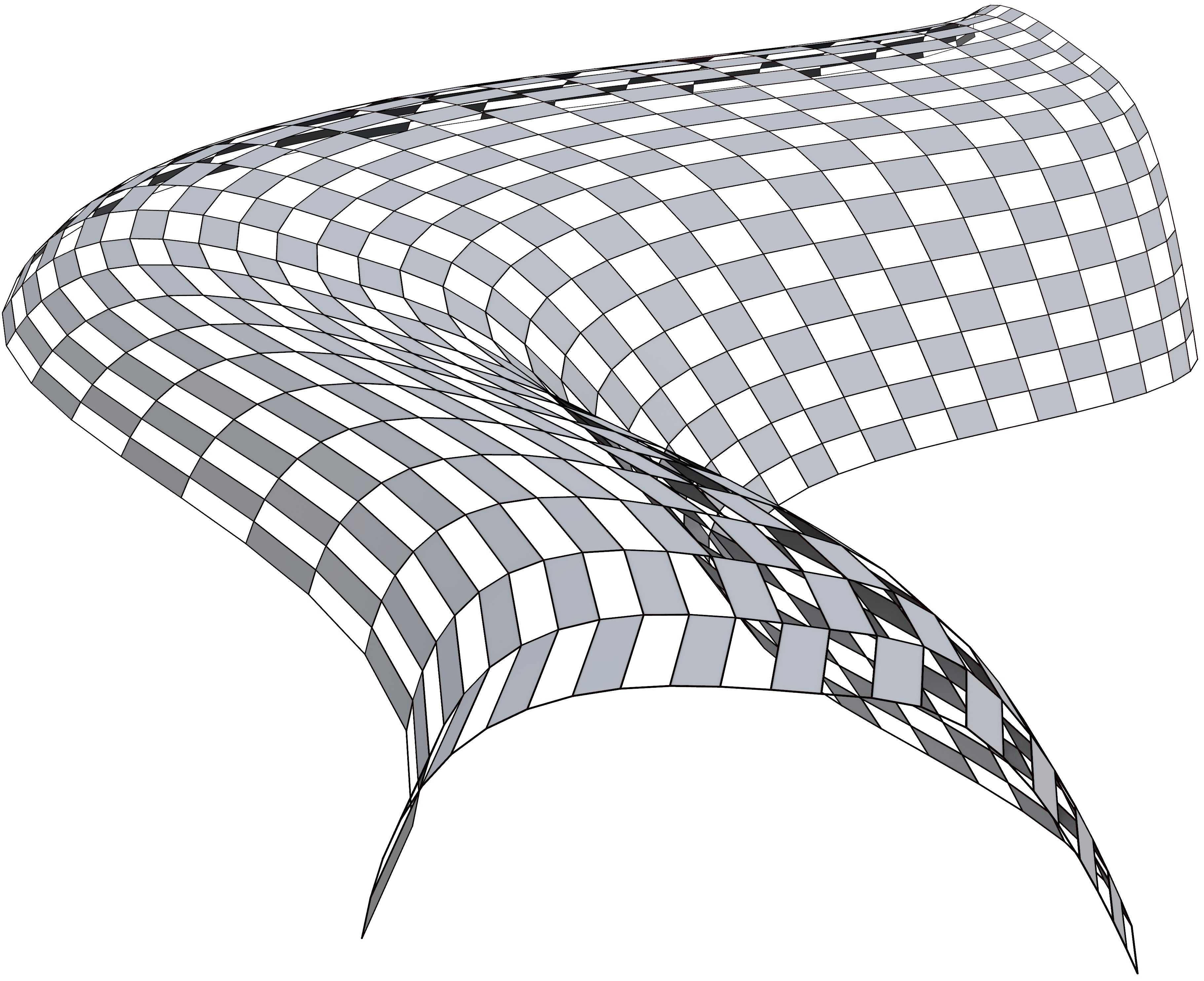}
    \qquad\qquad\qquad\includegraphics[scale=0.036]
    {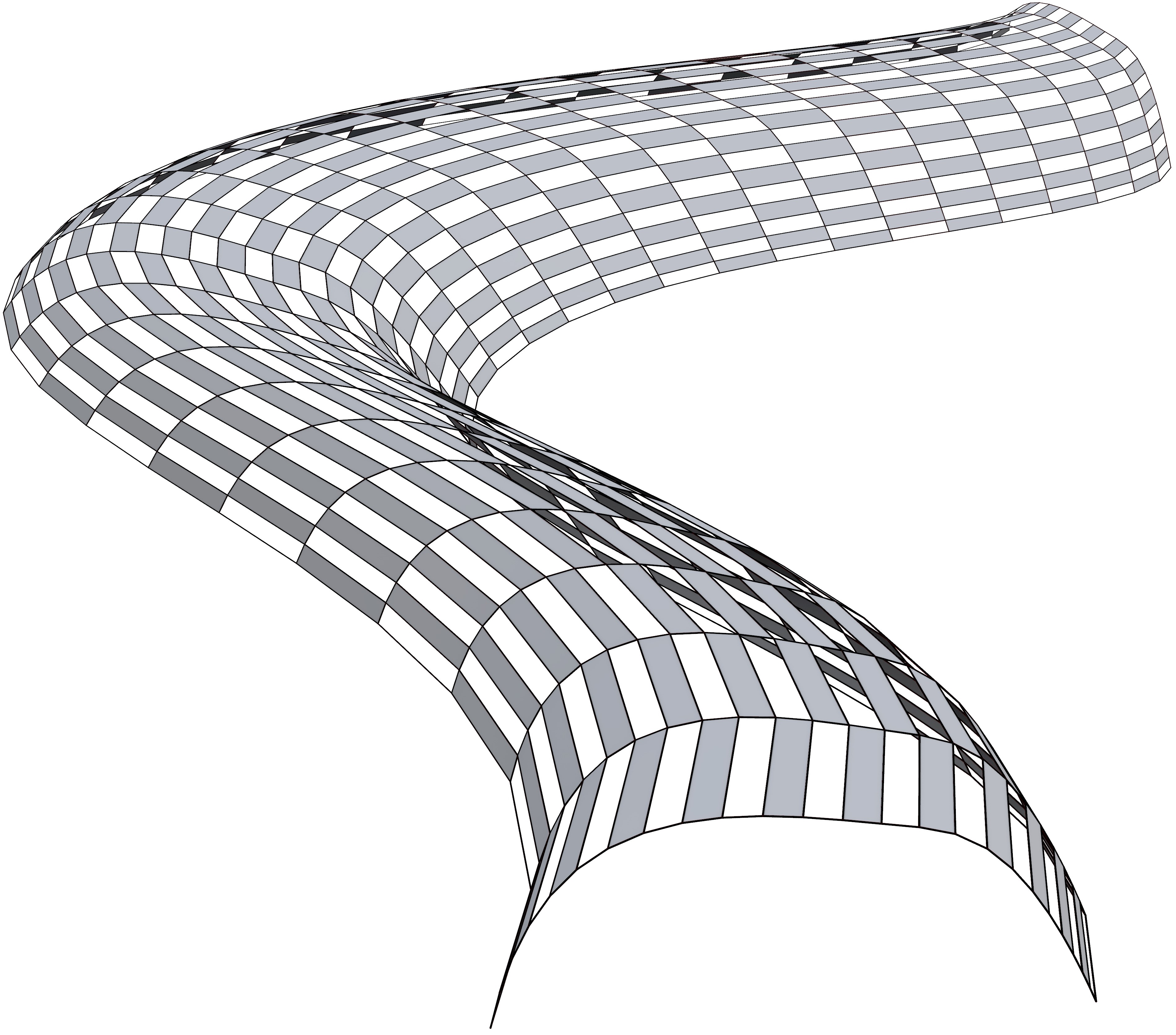}
    \caption{A sequence of deformations of a Q-net from class (i) in $\mathbb R^3$. Corresponding edges are parallel and corresponding faces have equal areas. For a net from class~(i), the two faces of each $1\times2$ sub-net or each $2\times1$ sub-net are affine symmetric. See Theorem~\ref{th-mxn}}
    \label{fig:type i}
\end{figure}

C-trafos play a fundamental role in discrete differential geometry \cite{bobenko-2008-ddg}. Two Q-nets are related by a C-trafo if corresponding edges are parallel.
Dual to the search for flexible Q-nets in $I^3$,
we have  to find all
\emph{Q-nets which allow for a continuous family of area preserving C-trafos}.
In the following, we denote such Q-nets as \emph{deformable}. See Figure~\ref{fig:type i}.

 This problem is one of affine geometry in $\RR^2$. Indeed, if two Q-nets are related by a C-trafo, corresponding faces are parallel, and we require that they have the same 
 area. This implies immediately the following basic fact. If two Q-nets in 
$\RR^d$, where $d \ge 3$, are related by an area-preserving C-trafo, the same is true for their images under 
any parallel projection into a plane. 
Conversely, 
any area preserving C-trafo in the plane
can be lifted to $\RR^d$ in infinitely many essentially different ways. 
We just add $d-2$ coordinates to the vertices of two transversal discrete parameter lines of one Q-net in the plane and
complete the lifted nets via parallelism. 

After having classified all deformable Q-nets in $I^3$, we will apply
metric duality and obtain all flexible Q-nets in $I^3$. Then, our goal is to transform these nets with a numerical
optimization algorithm to Euclidean flexible Q-nets. The latter
is, however, subject of a future publication.

\subsection{Pointers to the classical literature}
Area preserving maps between surfaces are a well studied
subject of classical differential geometry and cartography. In the
latter case, the focus is on maps from the sphere to the plane.

Already the seemingly simple cases of area preserving maps on the sphere $S^2$ or in the
Euclidean plane $\mathbb R^2$ possess a remarkable theory that reaches into non-Euclidean geometries. G.~Fubini \cite{fubini} found all area-preserving maps on $S^2$ via
the so-called kinematic mapping of oriented lines $L$ in elliptic 3-space $E^3$ to pairs $(L_l,L_r)$ of points (left and right image) on $S^2$.
For details on this map, which uses Clifford parallelism in $E^3$, we refer
to \cite{hrmueller-kin,pottwall:2001}. The normal lines
of a surface $\Phi \subset E^3$ get mapped to a left and right image
domain $\Phi_l$ and $\Phi_r$, respectively, which are related by an
area preserving map. All smooth area-preserving maps on $S^2$ can be generated in
this way. 

Only after Fubini's result, a similar construction of area preserving
maps in the plane has been discovered. Based on initial work by
G.~Scheffers \cite{scheffers}, K.~Strubecker \cite{strubecker4} developed the
theory of the so-called \emph{paratactic map} of contact elements $C$ in  isotropic 3-space $I^3$ to pairs of points $(C_l, C_r)$ in the plane.
A contact element is defined as a point and an incident plane. 
The paratactic map is one application of the geometry in $I^3$.  
It possesses limits of Clifford
translations which generate the paratactic map. Remarkably, if the
contact elements are formed by the points plus tangent planes of a surface $\Phi \subset I^3$, the left and right images in the plane
are related by an area preserving map. A special property related to our work is the following: Mapping the asymptotic net
of a negatively curved surface $\Phi$ results in two nets related
by a C-trafo. Hence, a negatively curved surface $\Phi$ generates an area preserving C-trafo $\Phi_l \mapsto \Phi_r$ between two nets in $\RR^2$. Note that we are searching for a continuous family of nets
related by area preserving C-trafos. 

Another relevant 
result is 
Minkowski's theorem on the existence and uniqueness of a convex polytope with given directions and areas of faces. The polytope surface can be viewed as a mesh (with combinatorics more complex than a square grid), and then the theorem implies that the mesh admits no non-congruent area-preserving C-trafos. 
Via the metric duality $\delta$, this translates to \emph{isotropic 
Cauchy's 
theorem}: {a mesh in $I^3$ whose metric dual is the surface of a convex polytope is not flexible.} 

\subsection{Contributions and overview}
Our contributions in this paper are as follows.

In Section~\ref{sec-deformable-dual-nets}, we provide a 
classification of all deformable nets with $2 \times 2$ 
faces ($ 3 \times 3$ vertices), since an $m \times n$ net is deformable if all
$2 \times 2$ sub-nets are, as we show later. This is in agreement
with the fact that in $I^3$ (as in Euclidean $\RR^3$) flexible Q-nets require
flexible sub-nets of $3  \times 3$ faces, where 
only the 4 vertices of the inner face matter. 
Remarkably, \emph{there are just two different classes}. See Theorem~\ref{th-classification}. The main part of the section presents
the proof of the classification. It amounts to discussing
a certain system of quadratic equations and identifying
the cases where it has a one-parametric real solution.
To keep our arguments elementary, we avoid tools such as complex projective space and resultants. 


In Section~\ref{sec-deformable-mxn}, we classify deformable $m \times n$ nets and show how to build them from boundary data. See Theorem~\ref{th-mxn}. All $2 \times 2$ sub-nets turn out to be of the same class. For class (i), the resulting
nets are special cases of cone nets that have been recently studied by Kilian, M\"uller, and Tervooren \cite{kilian2023smooth}. Such nets also appeared in free-form architectural glass structures with planar quadrilateral (in fact trapezoidal) faces \cite{glymphetal}. For class (ii), the resulting
nets are special cases of well-known K\oe nigs nets \cite{bobenko-2008-ddg}. The class consists of nets having a Christoffel dual with the same areas of corresponding faces. While class (i) can yield visually pleasing discretizations of
smooth deformable nets (see Figure~\ref{fig:type i}), class (ii) almost always exhibits
the opposite behavior and a crumpled appearance (see Figures~\ref{fig:type ii more} and~\ref{fig:type ii}). 

This fact has a large impact  when turning to smooth analogs in 
Section~\ref{sec-smooth-def}. The first class (i) yields smooth double cone
nets where one family of cones are cylinders. The surfaces are
generalizations of translational surfaces, called
scale-translational surfaces in architecture \cite{glymphetal}. Ordinary translational surfaces appear as the
only smooth analogs of the second class (ii). They constitute
a special case of the smooth analogs of (i). Their
metric duals in $I^3$ are the
isotropic counterparts of Voss nets. 

In Section~\ref{sec:final}, we conclude the paper with a few remarks on the dual flexible Q-nets
whose detailed study and possibly conversion to Euclidean flexible
Q-nets through optimization shall form the content of a separate
publication. The nets of class (i) are instances of 
or closely related to
multi-conjugate nets in the sense of Bobenko et al. \cite{multinets-2018}. They are also
 of interest in architectural applications as discussed in  \cite{PP-2022,Cheng-CAD-2023}. Those
of class (ii) appear crumpled, which is a frequent effect in Euclidean flexible Q-nets such as
Miura origami and generalizations (see e.g. \cite{he2020rigid}).




\begin{figure}[!t]
    \centering
    \includegraphics[scale=0.22]{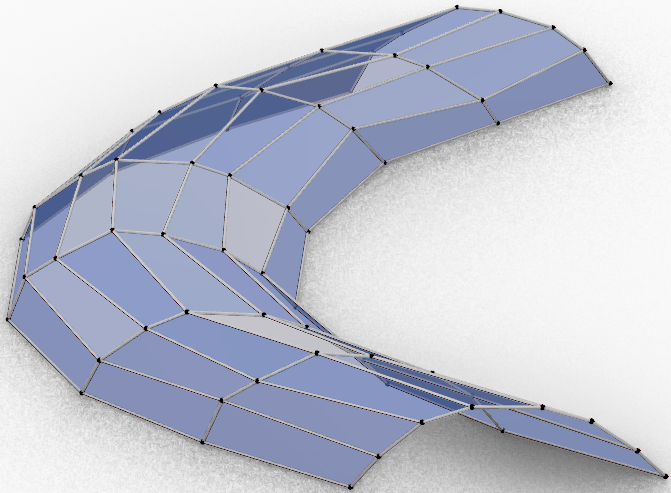}\quad\includegraphics[scale=0.22]{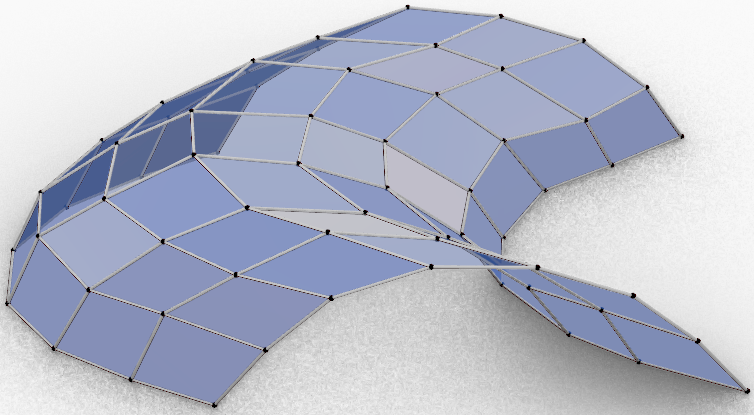}\quad\includegraphics[scale=0.22]{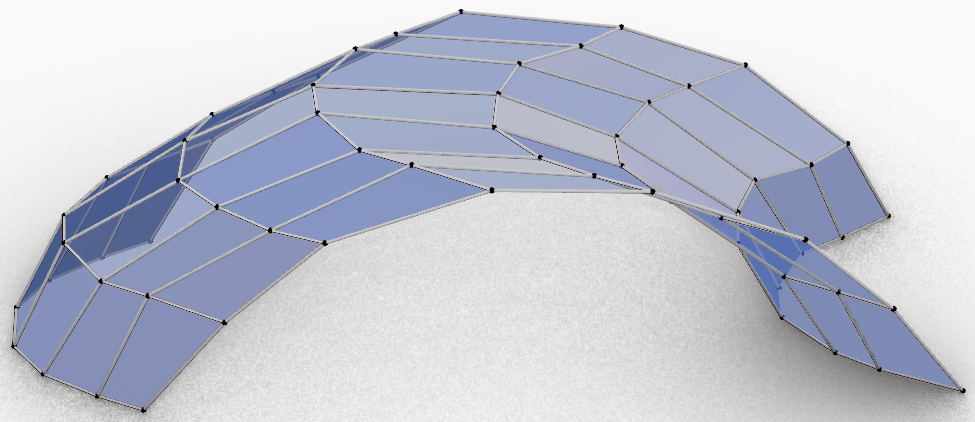}
    \caption{A sequence of deformations of a net from class (ii) in $\mathbb R^3$. 
    Any two neighboring faces have equal so-called opposite ratios with respect to the common edge. See Section~\ref{ssec-statement-deformable-2x2} and Theorem~\ref{th-mxn}}
    \label{fig:type ii more}
\end{figure}



\begin{figure}[htbp]
    \centering
    \includegraphics[scale=0.048]
    {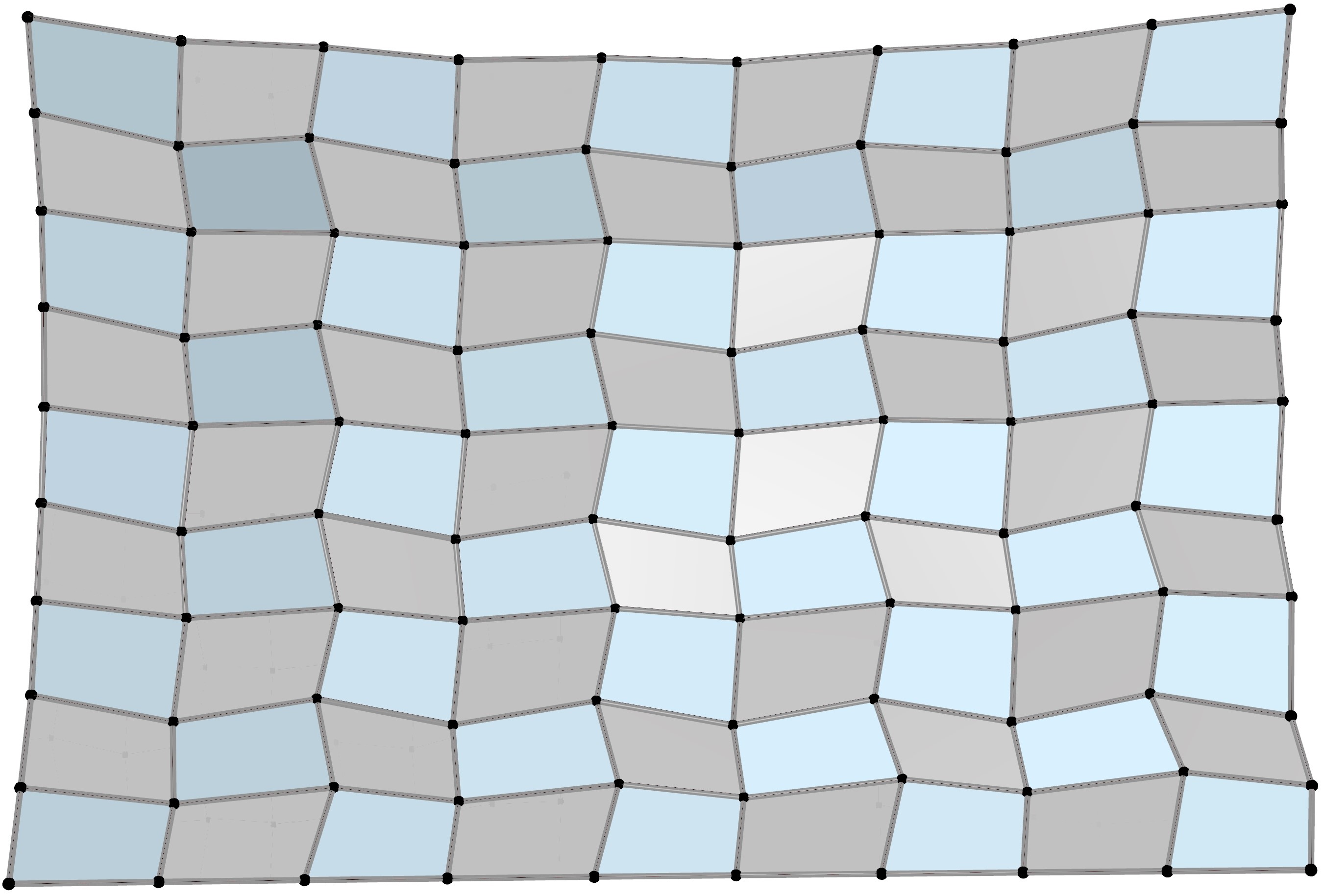}\quad\includegraphics[scale=0.048]
    {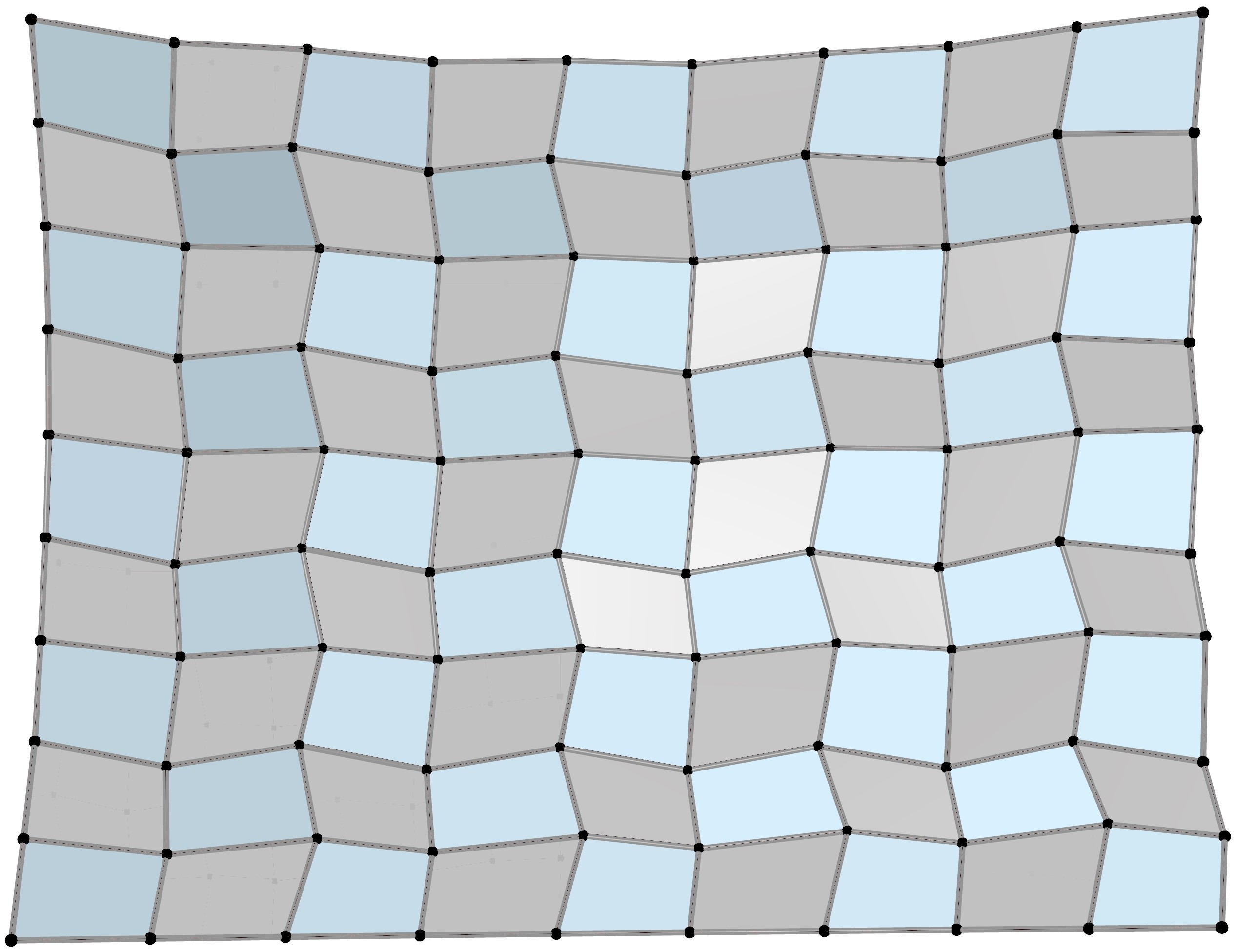}\quad\includegraphics[scale=0.048]
    {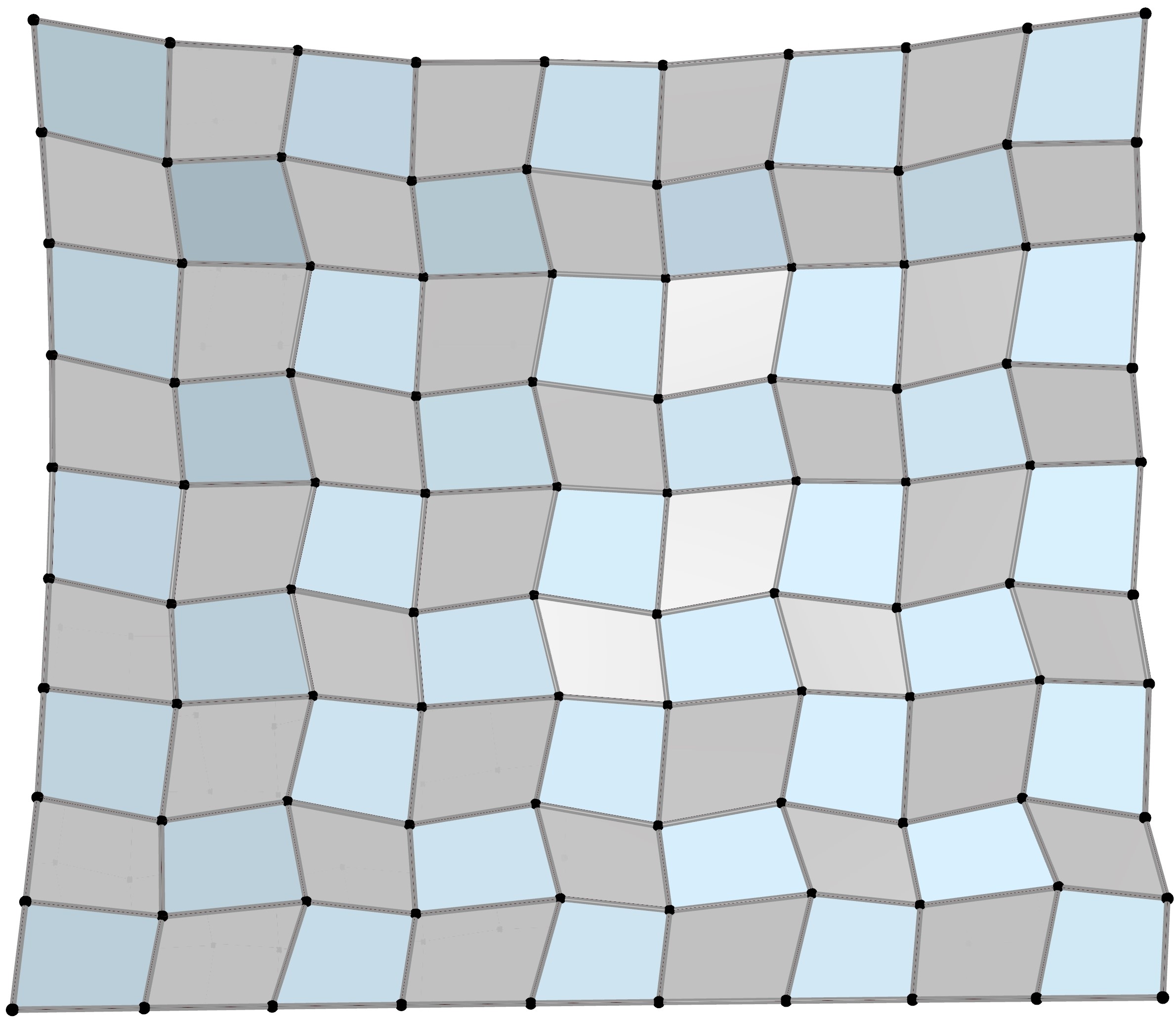}
    \caption{A sequence of deformations of a $9\times9$ net from class (ii) in $\mathbb R^3$. 
    See Theorem~\ref{th-mxn}}
    \label{fig:type ii}
\end{figure}




\section{Deformable $2\times 2$ nets}\label{sec-deformable-dual-nets}

In this section, we introduce basic notions and characterize all deformable $2\times 2$ nets, i.e., nets that admit a family of non-congruent area-preserving Combescure transformations. 
We state the Classification Theorem~\ref{th-classification} in Section~\ref{ssec-statement-deformable-2x2}, discuss the geometry of deformable nets in Section~\ref{ssec-properties-deformable-2x2}, and give the proof in Section~\ref{ssec-proof-deformable-2x2}. 

\subsection{Statement of the classification}
\label{ssec-statement-deformable-2x2}
Let us introduce basic notions. 
By a $m\times n$ \emph{net} we mean a collection of $(m+1)(n+1)$ points $P_{ij}\in\mathbb{R}^d$ indexed by two integers $0\le i\le m$ and $0\le j\le n$ (i.e., a map $\{0,\dots,m\}\times \{0,\dots,n\}\to\mathbb{R}^d$), such that $P_{ij},P_{i+1,j},P_{i+1,j+1},P_{i,j+1}$ are consecutive vertices of a convex quadrilateral for all $0\le i< m$ and $0\le j< n$. See Figure~\ref{figure:lemma-eqqq}.
The latter quadrilaterals are called \emph{faces}, and their sides are called \emph{edges}.
Faces and edges are \emph{labeled}, i.e., equipped with an assignment of indices to their vertices (see~\cite[\S2]{Mor20} for details).
An $m\times n$ net has $mn$ faces, thus the~name.

Edges or faces spanned by points with the same indices in two $m\times n$ nets are called \emph{corresponding}. 
Two $m\times n$ nets are \emph{parallel} or \emph{Combescure transformations} of each other if their corresponding edges are parallel. Two $m\times n$ nets are \emph{congruent}, if there is an isometry of $\mathbb{R}^d$ taking each point of the first net to the point of the second net with the same indices.

We come to the main notion of the paper.

\begin{definition} \label{def-deformable-net} (See Figure~\ref{figure:lemma-eqqq}) An $m\times n$ net is \emph{deformable} if contained in a continuous family of non-congruent parallel $m\times n$ nets 
with the same areas of corresponding faces. 
Any member of this family is called a \emph{deformation}
of the initial net.
\end{definition}

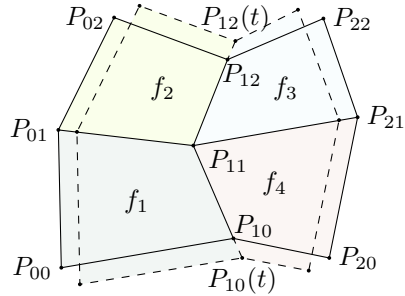
\begin{figure}[htbp]
  \centering
\begin{tikzpicture}[scale=0.13]

\coordinate (A1) at (11.7, -16);
\coordinate (O) at (8.2, -24.8);
\coordinate (A2) at (25, -21.9);
\coordinate (B2) at (22.1, -36.3);
\coordinate (A3) at (12.3, -34.3);
\coordinate (B3) at (-5.3, -37.3);
\coordinate (A4) at (-5.6, -23.2);
\coordinate (B4) at (0.1, -11.7);
\coordinate (B1) at (21.5, -11.8);
\coordinate (A1t) at (12.5, -14.1);
\coordinate (B1t) at (18.9, -10.9);
\coordinate (A2t) at (23.1, -22.2);
\coordinate (B2t) at (20, -37.6);
\coordinate (A3t) at (13.2, -36.3);
\coordinate (B3t) at (-3.4, -39);
\coordinate (A4t) at (-3.7, -23.4);
\coordinate (B4t) at (2.7, -10.5);

\fill[babyblue!5] (O) -- (A1) -- (B1) -- (A2) -- cycle;
\fill[babyblue!5] (O) -- (A1t) -- (B1t) -- (A2t) -- cycle;
\fill[antiquebrass!8] (O) -- (A2) -- (B2) -- (A3) -- cycle;
\fill[antiquebrass!8] (O) -- (A2t) -- (B2t) -- (A3t) -- cycle;
\fill[britishracinggreen!5] (O) -- (A3) -- (B3) -- (A4) -- cycle;
\fill[britishracinggreen!5] (O) -- (A3t) -- (B3t) -- (A4t) -- cycle;
\fill[lime!8] (O) -- (A4) -- (B4) -- (A1) -- cycle;
\fill[lime!8] (O) -- (A4t) -- (B4t) -- (A1t) -- cycle;

\draw[black, thin] (A1) -- (O) -- (A2) -- (B2) -- (A3) -- (B3) -- (A4) -- (B4) -- (A1) -- (B1) -- (A2);
\draw[black, thin] (A4) -- (O) -- (A3);
\draw[black, thin, dashed] (A1t) -- (B1t) -- (A2t) -- (B2t) -- (A3t) -- (B3t) -- (A4t) -- (B4t) -- (A1t);
\draw[black, thin, dashed] (A1t) -- (A1);
\draw[black, thin, dashed] (A3t) -- (A3);

\filldraw[black] (O) circle (4pt);
\filldraw[black] (8.8, -24.3) circle (0pt) node[anchor=north west]{$P_{11}$};

\filldraw[black] (25, -21.9) circle (4pt) node[anchor=west]{$P_{21}$};
\filldraw[black] (12.3, -34.3) circle (4pt);
\filldraw[black] (11.4, -35.1) circle (0pt) node[anchor=south west]{$P_{10}$};
\filldraw[black] (-5.6, -23.2) circle (4pt) node[anchor=east]{$P_{01}$};
\filldraw[black] (11.7, -16) circle (4pt); 
\filldraw[black] (10.2, -15.5) circle (0pt) node[anchor=north west]{$P_{12}$};

\filldraw[black] (21.5, -11.8) circle (4pt) node[anchor=west]{$P_{22}$};
\filldraw[black] (22.1, -36.3) circle (4pt) node[anchor=west]{$P_{20}$};
\filldraw[black] (-5.3, -37.3) circle (4pt) node[anchor=east]{$P_{00}$};
\filldraw[black] (0.1, -11.7) circle (4pt) node[anchor=east]{$P_{02}$};

\filldraw[black] (23.1, -22.2) circle (4pt); 
\filldraw[black] (-3.7, -23.4) circle (4pt); 
\filldraw[black] (12.5, -14.1) circle (4pt) node[anchor=south]{$P_{12}(t)$};
\filldraw[black] (13.2, -36.3) circle (4pt) node[anchor=north]{$P_{10}(t)$};

\filldraw[black] (18.9, -10.9) circle (4pt); 
\filldraw[black] (20, -37.6) circle (4pt); 
\filldraw[black] (-3.4, -39) circle (4pt); 
\filldraw[black] (2.7, -10.5) circle (4pt); 

\filldraw[black] (17.63456,-18.97232) node[anchor=center]{$f_3$};
\filldraw[black] (18.74035,-28.64956) node[anchor=east]{$f_4$};
\filldraw[black] (2.40359,-30.5) node[anchor=center]{$f_1$};
\filldraw[black] (2.84118,-18.94961) node[anchor=west]{$f_2$};
\end{tikzpicture}

\caption{A $2\times 2$ net (solid lines) and its deformation (dashed lines). Corresponding edges are parallel and corresponding faces (colored with the same color) have equal areas. See Definition~\ref{def-deformable-net}.}
\label{figure:lemma-eqqq}
\end{figure}

\begin{figure}[htbp]
\centering

\begin{tikzpicture}[scale=0.08]

\coordinate (P) at (0.06565, -0.004385);
\coordinate (P') at (20.001513, 0.837969);
\coordinate (D) at (40.064928, -0.002347);
\coordinate (C) at (4.581406, 21.030862);
\coordinate (B) at (20.015315, 12.739852);
\coordinate (C') at (35.568782, 21.032441);
\coordinate (D') at (-6.163002, 39.917815);
\coordinate (A) at (17.421152, 27.538802);
\coordinate (E) at (41.300775, 39.920233);

\fill[pattern=north west lines, pattern color=aurometalsaurus!70] (B) -- (C) -- (D') -- (A) -- cycle;
\fill[pattern=north east lines, pattern color=aurometalsaurus!70] (B) -- (C') -- (E) -- (A) -- cycle;

\fill[pattern=north east lines, pattern color=auburn!70] (P') -- (P) -- (C) -- (B) -- cycle;
\fill[pattern=north west lines, pattern color=auburn!70] (D) -- (P') -- (B) -- (C') -- cycle;

\draw[applegreen, thick] (C) -- (B) -- (C');
\draw[applegreen, thick] (P') -- (B) -- (A);
\draw[applegreen, thick] (D') -- (A) -- (E) -- (C') -- (D) -- (P') -- (P) -- (C) -- (D');
\draw[applegreen, thick] (C) -- (B) -- (C');

\filldraw[airforceblue] (D) circle (10pt) node[anchor=east]{};
\filldraw[airforceblue] (C) circle (10pt) node[anchor=south east]{};
\filldraw[airforceblue] (B) circle (10pt) node[anchor=south]{};
\filldraw[airforceblue] (C') circle (10pt) node[anchor=south west]{};
\filldraw[airforceblue] (D') circle (10pt) node[anchor=west]{};
\filldraw[airforceblue] (A) circle (10pt) node[anchor=north]{};
\filldraw[airforceblue] (E) circle (10pt) node[anchor=south east]{};
\filldraw[airforceblue] (P') circle (10pt) node[anchor=west]{};
\filldraw[airforceblue] (P) circle (10pt) node[anchor=east]{};


\draw[gray, very thin, <->] (13.8,7.95) -- (27,7.95);
\draw[gray, very thin, <->] (13.1,24) -- (25.9,24);
\filldraw[white] (11.6,8) circle (62pt);
\filldraw[black] (11.6,8) circle (0.0pt) node[]{2};
\filldraw[white] (29,8) circle (62pt);
\filldraw[black] (29,8) circle (0.0pt) node[]{$2'$};
\filldraw[white] (11,24) circle (62pt);
\filldraw[black] (11,24) circle (0.0pt) node[]{1};
\filldraw[white] (28,24) circle (62pt);
\filldraw[black] (28,24) circle (0.0pt) node[]{$1'$};

\end{tikzpicture}
\qquad\qquad\qquad
\begin{tikzpicture}[scale=1.7]

\coordinate (P) at (0.141595, 0.064912);
\coordinate (P') at (0.833477, 0.183469);
\coordinate (D) at (2.059665, 0.087001);
\coordinate (C) at (0.009566, 1.019064);
\coordinate (B) at (0.975775, 0.830942);
\coordinate (C') at (2.992722, 1.052864);
\coordinate (D') at (0.183345, 2.11201);
\coordinate (A) at (0.863328, 1.696711);
\coordinate (E) at (2.113132, 2.121726);


\path[name path=line 1] (B) -- (D);
\path[name path=line 2] (P') -- (C');
\path[name path=line 3] (A) -- (C');
\path[name path=line 4] (B) -- (E);
\path[name path=line 5] (B) -- (D');
\path[name path=line 6] (A) -- (C);
\path[name path=line 7] (B) -- (P);
\path[name path=line 8] (P') -- (C);

\fill[name intersections={of=line 1 and line 2, by=X1}];
\fill[name intersections={of=line 3 and line 4, by=X2}];
\fill[name intersections={of=line 5 and line 6, by=X3}];
\fill[name intersections={of=line 7 and line 8, by=X4}];

\fill[pattern=north west lines, pattern color=antiquebrass!70] (B) -- (X1) -- (C') -- cycle;
\fill[pattern=north west lines, pattern color=antiquebrass!70] (P') -- (X1) -- (D) -- cycle;
\fill[pattern=north west lines, pattern color=antiquebrass!70] (B) -- (X2) -- (C') -- cycle;
\fill[pattern=north west lines, pattern color=antiquebrass!70] (A) -- (X2) -- (E) -- cycle;

\fill[pattern=north east lines, pattern color=armygreen!70] (E) -- (X2) -- (C') -- cycle;
\fill[pattern=north east lines, pattern color=armygreen!70] (A) -- (X2) -- (B) -- cycle;
\fill[pattern=north east lines, pattern color=armygreen!70] (A) -- (X3) -- (B) -- cycle;
\fill[pattern=north east lines, pattern color=armygreen!70] (D') -- (X3) -- (C) -- cycle;

\fill[pattern=north west lines, pattern color=applegreen!70] (A) -- (X3) -- (D') -- cycle;
\fill[pattern=north west lines, pattern color=applegreen!70] (C) -- (X3) -- (B) -- cycle;
\fill[pattern=north west lines, pattern color=applegreen!70] (C) -- (X4) -- (B) -- cycle;
\fill[pattern=north west lines, pattern color=applegreen!70] (P) -- (X4) -- (P') -- cycle;

\fill[pattern=north east lines, pattern color=aurometalsaurus!70] (P) -- (X4) -- (C) -- cycle;
\fill[pattern=north east lines, pattern color=aurometalsaurus!70] (P') -- (X4) -- (B) -- cycle;
\fill[pattern=north east lines, pattern color=aurometalsaurus!70] (B) -- (X1) -- (P') -- cycle;
\fill[pattern=north east lines, pattern color=aurometalsaurus!70] (C') -- (X1) -- (D) -- cycle;

\draw[applegreen, thick] (C) -- (B) -- (C');
\draw[applegreen, thick] (P') -- (B) -- (A);
\draw[applegreen, thick] (D') -- (A) -- (E) -- (C') -- (D) -- (P') -- (P) -- (C) -- (D');
\draw[applegreen, thick] (C) -- (B) -- (C');

\draw[gray!50, thin] (C) -- (A);
\draw[gray!50, thin] (B) -- (D');
\draw[gray!50, thin] (E) -- (B);
\draw[gray!50, thin] (A) -- (C');
\draw[gray!50, thin] (P') -- (C');
\draw[gray!50, thin] (B) -- (D);
\draw[gray!50, thin] (B) -- (P);
\draw[gray!50, thin] (C) -- (P');

\filldraw[airforceblue] (D) circle (0.5pt) node[anchor=east]{};
\filldraw[airforceblue] (C) circle (0.5pt) node[anchor=south east]{};
\filldraw[airforceblue] (B) circle (0.5pt) node[anchor=south]{};
\filldraw[airforceblue] (C') circle (0.5pt) node[anchor=south west]{};
\filldraw[airforceblue] (D') circle (0.5pt) node[anchor=west]{};
\filldraw[airforceblue] (A) circle (0.5pt) node[anchor=north]{};
\filldraw[airforceblue] (E) circle (0.5pt) node[anchor=south east]{};
\filldraw[airforceblue] (P') circle (0.5pt) node[anchor=west]{};
\filldraw[airforceblue] (P) circle (0.5pt) node[anchor=east]{};

\filldraw[white] (1.5,1.7) circle (2.6pt);
\filldraw[black] (1.5,1.7) circle (0.0pt) node[]{$2'$};
\filldraw[white] (1.7,1.19) circle (2.6pt);
\filldraw[black] (1.7,1.19) circle (0.0pt) node[]{$1'$};
\filldraw[white] (1.6,0.7) circle (2.6pt);
\filldraw[black] (1.6,0.7) circle (0.0pt) node[]{1};
\filldraw[white] (1.45,0.28) circle (2.6pt);
\filldraw[black] (1.45,0.28) circle (0.0pt) node[]{2};

\end{tikzpicture}
\caption{Two classes of deformable $2\times 2$ nets introduced in Theorem~\ref{th-classification}. Left: class (i). Quadrilaterals in each pair $1,1'$ and $2,2'$ are affine symmetric, i.e., there exist two affine maps: one maps $1$ to $1'$ and the other maps $2$ to $2'$, keeping the points of the common sides fixed. Right: class (ii). Two pairs of red triangles $1,2$ and $1',2'$ have equal area ratios. This property holds for other pairs of triangles with the same color.}
\label{fig-defs-i-ii}
\end{figure}
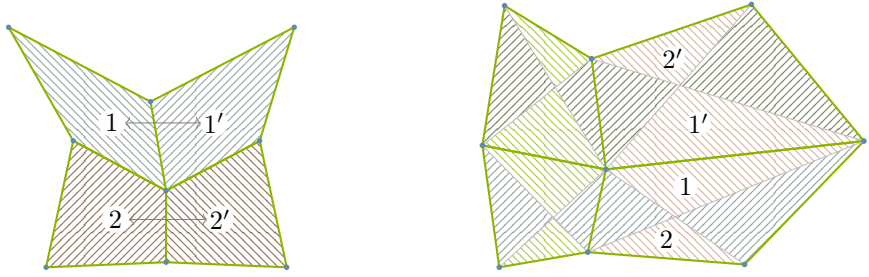

\begin{figure}[htbp]
  \centering
\begin{tikzpicture}[scale=0.8]

\coordinate (A) at (-3.82, -0.74);
\coordinate (D) at (-2.8416, 1.03277);
\coordinate (S) at (-2.29447, 2.02757);
\coordinate (B) at (-1.06341, -0.70812);
\coordinate (R) at (0.50339, -0.68325);

\path[name path=line 1] (S) -- (B);
\path[name path=line 2] (D) -- (R);
\fill[name intersections={of=line 1 and line 2, by=C}];

\path[name path=line 3] (D) -- (B);
\path[name path=line 4] (A) -- (C);
\fill[name intersections={of=line 3 and line 4, by=Q}];

\fill[pattern=north east lines, pattern color=applegreen!70] (D) -- (Q) -- (C) -- cycle;
\fill[pattern=north east lines, pattern color=applegreen!70] (A) -- (Q) -- (B) -- cycle;


\draw[black] (D) -- (A) -- (B) -- (C) -- (D);
\draw[gray!50, very thin] (D) -- (B);
\draw[gray!50, very thin] (A) -- (C);
\draw[gray, thin, dashed] (D) -- (S) -- (C) -- (R) -- (B);

\filldraw[black] (A) circle (0.5pt) node[anchor=north east]{$A$};
\filldraw[black] (B) circle (0.5pt) node[anchor=north]{$B$};
\filldraw[black] (S) circle (0.5pt) node[anchor=south]{$S$};
\filldraw[black] (C) circle (0.5pt) node[anchor=south west]{$C$};
\filldraw[black] (D) circle (0.5pt) node[anchor=east]{$D$};
\filldraw[black] (R) circle (0.5pt) node[anchor=west]{$R$};
\filldraw[black] (Q) circle (0.5pt); \filldraw[black] (-2.03,0.2) circle (0pt) node[anchor=east]{$Q$};

\end{tikzpicture}

\caption{The opposite ratio of the quadrilateral $ABCD$ with respect to the side $AB$ is the ratio of the areas of the colored triangles. See Proposition~\ref{p-opposite-ratio-equivalent-def} for equivalent definitions.}
\label{figure:def-oppos-rat}
\end{figure}
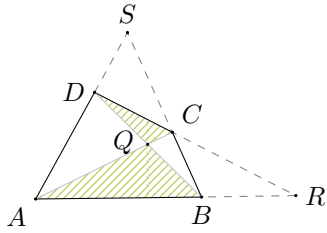

To state the classification theorem, we need a convention and two auxiliary notions.

The points of a $m\times n$ net need not be distinct. To deal with coincident points, 
we have labeled faces and edges.
Faces or edges with different labels are viewed as distinct even if they coincide as subsets of $\mathbb{R}^d$. In particular, a common edge of two faces must have common labels with both.

A pair of quadrilaterals in $\mathbb{R}^d$ with a common side is called \emph{affine symmetric with respect to the common side} if there is an affine map taking the first quadrilateral to the second one and keeping the points of the common side fixed. (Labels, if any, are ignored.) See Figure~\ref{fig-defs-i-ii}(left) for an illustration. 

The ratio of the areas of triangles $AQB$ and $CQD$ in a quadrilateral $ABCD$ with the diagonals meeting at $Q$ is called the \emph{opposite ratio of the quadrilateral $ABCD$ with respect to the side} $AB$. 
See Figures~\ref{figure:def-oppos-rat} and~\ref{fig-defs-i-ii}(right). 
Notice that if two quadrilaterals are affine symmetric with respect to the common side, then their opposite ratios with respect to all corresponding sides are equal. 

\begin{theorem} \label{th-classification} 
A $2\times 2$ net is deformable if and only if at least one of the following conditions~holds:
\begin{enumerate}
    \item[\textup{(i)}] the four faces split into two pairs that are affine symmetric with respect to the common edges;
    \item[\textup{(ii)}] each pair of faces with a common edge has equal opposite ratios with respect to that edge.
\end{enumerate}
\end{theorem}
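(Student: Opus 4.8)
The plan is to convert deformability into the solvability of an explicit system of four quadratic equations and then to determine exactly when that system carries a one-parameter \emph{real} solution.

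\emph{Reduction and parametrization.} First I would reduce to the plane: by the projection remark in the introduction an area-preserving planar C-trafo lifts to $\RR^d$, and conversely any lift projects back, so it suffices to classify planar $2\times 2$ nets. Fix the central vertex $P_{11}$ at the origin and let $\mathbf s_1,\mathbf s_2,\mathbf s_3,\mathbf s_4$ be the four edge vectors from $P_{11}$ to its neighbours, listed cyclically; index the faces $F_1,\dots,F_4$ so that $F_k$ is bounded by the spokes $\mathbf s_k,\mathbf s_{k+1}$ (indices mod $4$). A C-trafo fixing $P_{11}$ is determined by the scalings $\mathbf s_k\mapsto t_k\mathbf s_k$: each outer vertex is the intersection of the two lines through $t_k\mathbf s_k$ and $t_{k+1}\mathbf s_{k+1}$ having the prescribed (fixed) directions of the outer edges, hence depends linearly and homogeneously on $(t_k,t_{k+1})$. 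Thus the C-trafos form the four-parameter family $(t_1,t_2,t_3,t_4)$, the signed area of $F_k$ is a homogeneous quadratic $A_k(t_k,t_{k+1})=a_kt_k^2+b_kt_kt_{k+1}+c_kt_{k+1}^2$, and deformability becomes: the system $A_k(t_k,t_{k+1})=A_k(1,1)$, $k=1,\dots,4$, has a real solution curve through $(1,1,1,1)$. (Generically $\pm(1,1,1,1)$ give the only congruent members, so any such curve automatically contains non-congruent nets.)

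\emph{Dictionary between coefficients and geometry.} I would then record two computations translating the algebra back to Figure~\ref{fig-defs-i-ii}. Differentiating $A_k$ at $(1,1)$ shows that $\partial_{t_k}A_k$ and $\partial_{t_{k+1}}A_k$ are twice the areas of the two triangles into which the diagonal of $F_k$ issuing from $P_{11}$ divides it; in particular the ratio $a_k:b_k:c_k$, and hence the whole conic $A_k=A_k(1,1)$, is an affine invariant of the shape of $F_k$, expressible through the corner position or through the opposite ratios of $F_k$ with respect to its two spokes. Condition~(ii) then reads: the two faces meeting along each spoke have equal opposite ratios across that spoke; condition~(i) forces, for each pair, proportional quadratics with matching opposite ratios across all corresponding edges. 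Linearizing the system at $(1,1,1,1)$ gives the cyclic matrix with rows $p_k\dot t_k+r_k\dot t_{k+1}=0$, where $p_k=\partial_{t_k}A_k$ and $r_k=\partial_{t_{k+1}}A_k$; its determinant is $p_1p_2p_3p_4-r_1r_2r_3r_4$, so a \emph{necessary} condition for a deformation is
\[ p_1p_2p_3p_4=r_1r_2r_3r_4, \]
i.e. a balance, taken consistently around the net, between the four corner triangles on one side of the diagonals and the four on the other.

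\emph{Sufficiency.} For the two classes I would exhibit the families explicitly. In class~(i), scale equally the two spokes that the affine symmetries interchange (the two spokes not shared by the pairs); each symmetry then carries the deformed face onto its deformed partner, multiplying areas by a fixed factor, so the two area equations of each pair coincide. This leaves two equations in three surviving scaling parameters, hence a one-parameter family. In class~(ii), the equal-opposite-ratio relations are exactly the compatibility that lets the four conics $A_k=A_k(1,1)$, traversed around the cycle $t_1\to t_2\to t_3\to t_4\to t_1$, close up along a curve: I would construct the family by solving the conics consecutively and checking that the equal-opposite-ratio relations make the monodromy of one full turn the identity on a curve through $(1,1,1,1)$ (in the fully symmetric sub-case this is the explicit family $t_1=t_3,\ t_2=t_4$, and the general case follows the same pattern).

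\emph{Necessity (the crux).} The hard direction is to show that no other nets are deformable. Here I would eliminate $t_2,t_3,t_4$ from the cyclic system: solving $A_1=A_1(1,1)$ for $t_2$ in terms of $t_1$, substituting into $A_2$ to get $t_3$, and so on, yields after one turn a single relation $R(t_1)=0$, and a solution curve exists precisely when $R$ vanishes identically. Setting all coefficients of $R$ to zero produces the full system of polynomial conditions on the $a_k,b_k,c_k$, and the decisive step is to show that this locus factors into exactly the two geometric cases~(i) and~(ii). I expect this factorization to be the main obstacle: the eliminant has high degree, the coefficient equations are unwieldy, and one must argue over the reals throughout — separating genuine real solution curves from isolated real points or complex branches — while handling by hand the degenerate configurations (coincident vertices, parallel spokes, parallel outer edges, vanishing denominators) that the generic elimination discards. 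This is consistent with the paper's stated intention to avoid resultants and complex projective space and to settle the real-solution bookkeeping by an elementary case analysis.
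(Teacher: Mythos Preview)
Your reduction to the four quadratic constraints $A_k(t_k,t_{k+1})=A_k(1,1)$ is exactly the paper's Lemma~\ref{l-system}, and your sufficiency argument for class~(i) matches Example~\ref{ex-i}. For class~(ii) your sufficiency sketch (``the equal-opposite-ratio relations make the monodromy close'') is not yet a proof; the paper instead writes down explicit rational families in Example~\ref{ex-ii} (and, independently, the Christoffel-dual construction of Proposition~\ref{ex-ii-geo}).

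The real divergence is in the necessity argument. Your plan is a full elimination: solve around the cycle, obtain a single relation $R(t_1)=0$, and set all coefficients to zero. That eliminant has large degree (it essentially doubles at each substitution), and factoring the resulting coefficient locus into (i) and (ii) by hand is precisely the obstacle you flag at the end. The paper sidesteps this by eliminating only \emph{halfway}: it takes the resultant of $P_1,P_2$ with respect to $x_2$ and of $P_3,P_4$ with respect to $x_4$, obtaining two plane curves in the $(x_1,x_3)$-plane that must share a component. The organizing idea you are missing is the \emph{conic criterion} (Lemma~\ref{l-conic}): the intersection $C_i$ of the cylinders $P_i=0$ and $P_{i+1}=0$ contains a conic if and only if the faces $f_i,f_{i+1}$ have equal opposite ratios across their common edge. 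This gives a clean dichotomy. If every $C_i$ contains a conic, condition~(ii) holds directly. If some $C_i$ does not, its projection to the $(x_1,x_3)$-plane is an \emph{irreducible} curve of degree $3$ or $4$ (Lemma~\ref{l-projection}); hence it must coincide, up to a scalar, with the projection of $C_{i+2}$, and equating the at most nine coefficients (Lemma~\ref{l-proportional}) forces condition~(i). All the algebra stays at degree~$\le 4$, and the real-versus-complex and degenerate cases become a short case analysis rather than a wall of resultants. Finally, your Jacobian identity $p_1p_2p_3p_4=r_1r_2r_3r_4$ is exactly the K{\oe}nigs (infinitesimal-deformability) condition (cf.\ Section~\ref{subs-class(ii)} and Proposition~\ref{p-Koenigs}); it is indeed necessary, but strictly weaker than (i)$\vee$(ii), so it cannot replace the above dichotomy.
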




The reader interested in the proof of the theorem can proceed directly to Section~\ref{ssec-proof-deformable-2x2}, and now we discuss geometric properties of resulting classes~(i)--(ii) of deformable $2\times 2$ nets.


\subsection{Geometric properties}
\label{ssec-properties-deformable-2x2}

There are equivalent descriptions of classes~(i) and~(ii), 
providing additional insight and simple ways to check conditions~(i) and~(ii) of Theorem~\ref{th-classification} for a given net.

\subsubsection{Class~(i)} Let us start with class~(i) and show that it indeed consists of deformable nets.

\begin{example}\label{ex-i} 
A $2\times 2$ net 
whose faces split into two pairs that are affine symmetric with respect to the common edges is deformable. 
\end{example}

\begin{proof} See Figure~\ref{figure:lemma-eqqq} and enumerate the faces as shown there. Assume that both pairs of faces $f_1,f_2$ and $f_3,f_4$ are affine symmetric with respect to the common sides.

Fix the common vertex $P_{11}$ of all the faces. 
Move the point $P_{10}$ of the net slightly along the line $P_{11}P_{10}$ to a new position $P_{10}(t)$. On the resulting segment $P_{11}P_{10}(t)$, construct a new quadrilateral $f_1(t)$ with the same area and the same side directions as $f_1$. Consider the affine symmetry taking $f_1$ to $f_2$, and apply it to $f_1(t)$. We get a quadrilateral $f_2(t)$ with the same area and the same side directions as $f_2$ because an affine map preserves parallelism of lines 
and the ratio of areas. Also, $f_1(t)$ and $f_2(t)$ share a common side because the points of the line $P_{11}P_{01}$ are fixed by the affine symmetry.

Analogously construct a quadrilateral $f_4(t)$ and its image $f_3(t)$ under the affine symmetry taking $f_4$ to $f_3$. Then both pairs $f_4,f_4(t)$ and $f_3,f_3(t)$ have parallel sides and equal areas, and $f_3(t)$ and $f_4(t)$ share a common side.
It remains to note that $f_2(t)$ and $f_3(t)$ also share a common side: indeed, both affine symmetries take $P_{10}$ to $P_{12}$ and preserve the ratio 
$P_{11}P_{10}:P_{11}P_{10}(t)$, hence they take $P_{10}(t)$ to the same point $P_{12}(t)$. 
Thus $f_1(t),f_2(t),f_3(t),f_4(t)$ together constitute a deformation of our net.
\end{proof}

Let us introduce a geometric characterization of affine symmetric quadrilaterals. See Figure~\ref{figure:properties}.


\begin{proposition}\label{th-old}
Two convex quadrilaterals $ABCD$ and $ABC'D'$ in $\mathbb{R}^d$, where $C, D, C', D'$ are non-collinear, are affine symmetric with respect to their common side $AB$, if and only if $CC'\parallel DD'$ and the lines $AB$, $CD$, $C'D'$ are concurrent or parallel. Moreover, if the quadrilaterals are not coplanar, then the latter condition 
and the assumption that $C, D, C', D'$ are non-collinear
can be dropped.
%
%
%
%
\end{proposition}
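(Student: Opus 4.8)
The plan is to reduce everything to a single affine map. Since $ABCD$ is a convex quadrilateral, the points $A,B,C$ are affinely independent and span the plane $\pi$ of the first quadrilateral, and likewise $A,B,C'$ span the plane $\pi'$ of the second. An affine map that fixes the two distinct points $A,B$ fixes the whole line $AB$ pointwise, so I introduce the \emph{unique} affine map $\phi$ of $\mathrm{aff}(A,B,C)$ with $\phi(A)=A$, $\phi(B)=B$, $\phi(C)=C'$. Writing $D$ in barycentric coordinates $D=\alpha A+\beta B+\gamma C$ with $\alpha+\beta+\gamma=1$, affineness gives the clean identity
\[
\phi(D)-D=\gamma\,(C'-C).
\]
Thus affine symmetry with respect to $AB$ is \emph{equivalent} to $\phi(D)=D'$, i.e.\ to $D'-D=\gamma\,(C'-C)$, where $\gamma$ is the barycentric weight of $C$ in $D$.

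Necessity is then immediate. If the quadrilaterals are affine symmetric, the symmetry is forced to be this very $\phi$ (it is the only affine map fixing $A,B$ with $C\mapsto C'$), so $D'-D=\gamma\,(C'-C)$, whence $CC'\parallel DD'$ in every dimension. For the coplanar concurrency claim I will use that $\phi$ carries the line $CD$ to the line $C'D'$ (since $\phi(C)=C'$ and $\phi(D)=D'$) while fixing the point $P=CD\cap AB$; hence $P$ lies on $AB$, $CD$ and $C'D'$ simultaneously, giving concurrency, and if instead $CD\parallel AB$ then the image line $C'D'$ is parallel to $\phi(AB)=AB$, giving the parallel alternative.

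For sufficiency I split into two cases, the difference being a rank phenomenon. In the non-coplanar case the three vectors $B-A,\ C-A,\ C'-A$ are linearly independent, so expanding the hypothesis $D'-D=\lambda\,(C'-C)$ in this basis and matching coefficients forces $\lambda=\gamma$ together with equality of the remaining barycentric coordinates; hence $D'=\alpha A+\beta B+\gamma C'=\phi(D)$, and $\phi$ extends to an affine map of $\mathbb{R}^d$ fixing $A,B$, exhibiting the symmetry with neither the concurrency nor the non-collinearity hypothesis needed. In the coplanar case $C'-A$ lies in the span of $B-A$ and $C-A$, so parallelism alone leaves a one-parameter ambiguity and the concurrency condition must furnish the missing equation. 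Here I argue synthetically: because $\phi$ fixes $A,B$, the endomorphism $\phi-\mathrm{id}$ has rank at most one on $\pi$, so $\phi(D)-D$ is automatically a scalar multiple of $\phi(C)-C=C'-C$; the hypothesis $CC'\parallel DD'$ then places $\phi(D)$ on the line $DD'$, while concurrency places $\phi(D)$ on the line $C'D'$ (as $\phi$ maps $CD$ onto $C'D'$), and these two lines meet only in $D'$, so $\phi(D)=D'$.

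The main obstacle is the coplanar sufficiency, specifically ensuring that the lines $DD'$ and $C'D'$ are genuinely distinct so that they meet in the single point $D'$. This is precisely where the hypothesis that $C,D,C',D'$ are not all collinear enters: if $D$ lay on the line $C'D'$, then together with the concurrency point $P\in AB$ one would force $C$ onto that same line and hence all four points collinear, a contradiction. I will also record the routine degenerate sub-cases ($C=C'$, $\gamma=1$, or $CD\parallel AB$) to confirm they are covered by the same argument, and note that convexity is used only to secure the affine independence of $A,B,C$ (and of $A,B,D$), which is what makes $\phi$ well defined.
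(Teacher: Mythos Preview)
Your argument is correct and follows essentially the same route as the paper: both introduce the unique affine map fixing $A,B$ and sending $C\mapsto C'$, then verify that the geometric hypotheses force $D\mapsto D'$. The paper phrases the key steps synthetically via Thales' theorem, whereas you compute in barycentric coordinates and use a rank argument; you also treat the non-coplanar ``moreover'' clause explicitly (via linear independence of $B-A,\,C-A,\,C'-A$), which the paper's proof leaves implicit.
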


\begin{proof}[Proof] 
    \emph{The 'only if' part.} Let 
    $ABCD$ and $ABC'D'$ be affine symmetric. Then 
    there is an affine map that maps $C, D$ to $C', D'$, respectively, and preserves the points of the line $AB$.
    The map takes the intersection point $AB\cap CD$, if it exists, to $AB \cap C'D'$, which means that $AB, CD, C'D'$ are parallel or concurrent. In particular, $C\ne C'$ and $D\ne D'$, otherwise $C, D, C', D'$ are collinear. Since the affine map preserves ratios of parallel segments, by the Thales theorem we get $CC'\parallel DD'$.
    
    \emph{The 'if' part.} Consider the affine map that maps the triangle $ACB$ to $AC'B$. This map takes the line $CD$ to $C'D'$ because $AB, CD, C'D'$ are   
    concurrent or parallel. Since $CC'\parallel DD'$ and  $C, D, C', D'$ are non-collinear, by the Thales theorem it follows that $D$ is taken to $D'$, and we are done.  
\end{proof}

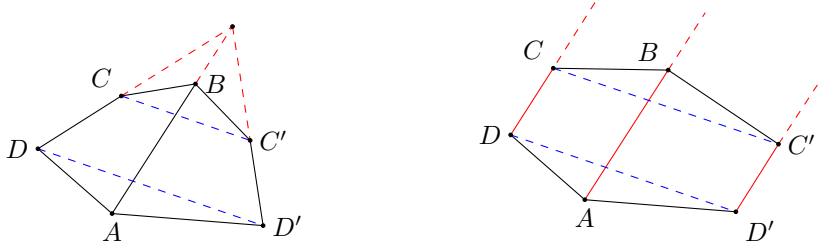
\begin{figure}[htbp]
  \centering
\begin{tikzpicture}[scale=0.17]
\draw[black, thin] (-2.50143, 1.26455) -- (4.02081, 5.3953) -- (9.81836, 6.3374) -- (14.09405, 1.91677) -- (15.10862, -4.75041) -- (3.29612, -3.80831) -- (-2.50143, 1.26455);
\draw[black, thin] (3.29612, -3.80831) -- (9.81836, 6.3374);

\draw[blue, dashed] (-2.50143, 1.26455) -- (15.10862, -4.75041);
\draw[blue, dashed] (4.02081, 5.3953) -- (14.09405, 1.91677);
\draw[red, dashed] (4.02081, 5.3953) -- (12.71713, 10.8305) -- (9.81836, 6.3374);
\draw[red, dashed] (12.71713, 10.8305) -- (14.09405, 1.91677);

\filldraw[black] (-2.50143, 1.26455) circle (4pt) node[anchor=east]{$D$};
\filldraw[black] (4.02081, 5.3953) circle (4pt) node[anchor=south east]{$C$};
\filldraw[black] (9.81836, 6.3374) circle (4pt) node[anchor=west]{$B$};
\filldraw[black] (14.09405, 1.91677) circle (4pt) node[anchor=west]{$C'$};
\filldraw[black] (15.10862, -4.75041) circle (4pt) node[anchor=west]{$D'$};
\filldraw[black] (3.29612, -3.80831) circle (4pt) node[anchor=north]{$A$};
\filldraw[black] (12.71713, 10.8305) circle (4pt) node[anchor=west]{};

\end{tikzpicture}\qquad\qquad\qquad 
\begin{tikzpicture}[scale=0.17]
\draw[black, thin] (0.83216, 6.48234) -- (9.81836, 6.3374) -- (18.44221, 0.53985); 
\draw[black, thin] (-2.50143, 1.26455) -- (3.29612, -3.80831) -- (15.10862, -4.75041);
\draw[red, thin] (-2.50143, 1.26455) -- (0.83216, 6.48234);
\draw[red, thin] (18.44221, 0.53985) -- (15.10862, -4.75041);
\draw[red, thin] (9.81836, 6.3374) -- (3.29612, -3.80831);
\draw[red, dashed] (4.09328, 11.62767) -- (0.83216, 6.48234);
\draw[red, dashed] (9.81836, 6.3374) -- (12.71713, 10.8305);
\draw[red, dashed] (18.44221, 0.53985) -- (21.48593, 5.17789);

\draw[blue, dashed] (-2.50143, 1.26455) -- (15.10862, -4.75041);
\draw[blue, dashed] (0.83216, 6.48234) -- (18.44221, 0.53985);

\filldraw[black] (-2.50143, 1.26455) circle (4pt) node[anchor=east]{$D$};
\filldraw[black] (0.83216, 6.48234) circle (4pt) node[anchor=south east]{$C$};
\filldraw[black] (9.81836, 6.3374) circle (4pt) node[anchor=south east]{$B$};
\filldraw[black] (18.44221, 0.53985) circle (4pt) node[anchor=west]{$C'$};
\filldraw[black] (15.10862, -4.75041) circle (4pt) node[anchor=north west]{$D'$};
\filldraw[black] (3.29612, -3.80831) circle (4pt) node[anchor=north]{$A$};

\end{tikzpicture}
\caption{Properties of two affine symmetric quadrilaterals. Blue lines are parallel and red lines are either concurrent or parallel. See Proposition~\ref{th-old}}
\label{figure:properties}
\end{figure}



As a consequence, we get the following geometric characterization of 
class~(i). 
See Figure~\ref{figure:theorem(i)}.

\begin{proposition}
    \label{lemma-three-pic}
    Consider a $2\times2$ net with points $P_{ij}$, where $0\leq i,j\leq2$, such that each of the quadruples $P_{00},P_{10},P_{02},P_{12}$ and $P_{10},P_{20},P_{12},P_{22}$ are not collinear. The pairs of faces $$(P_{00}P_{10}P_{11}P_{01}, P_{01}P_{11}P_{12}P_{02})\qquad\text{and}\qquad (P_{10}P_{20}P_{21}P_{11}, P_{11}P_{21}P_{22}P_{12})$$ are affine symmetric with respect to their common edges if and only if $P_{00}P_{02}\parallel P_{10}P_{12}\parallel  P_{20}P_{22}$ and the lines in each of the triples $P_{00}P_{10},P_{01}P_{11},P_{02}P_{12}$ and $P_{10}P_{20},P_{11}P_{21},P_{12}P_{22}$ are parallel or concurrent. Moreover, if no two faces are coplanar, then the latter two conditions can be dropped.
\end{proposition}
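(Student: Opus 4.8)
The plan is to reduce everything to two applications of Proposition~\ref{th-old}, one for each of the two prescribed pairs of faces, and then to conjoin the resulting equivalences. The only content beyond Proposition~\ref{th-old} is a careful matching of the net's vertices to the labels $A,B,C,D,C',D'$ of that proposition, arranged so that the common edge of a pair plays the role of $AB$ and the two remaining (``outer'') vertices of each face play the roles of $C,D$ and $C',D'$ respectively.

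For the first pair $(P_{00}P_{10}P_{11}P_{01},\,P_{01}P_{11}P_{12}P_{02})$ the common edge is $P_{01}P_{11}$, so I would set $A=P_{11}$ and $B=P_{01}$. Reading each face cyclically and taking $C,C'$ to be the neighbours of $B$ and $D,D'$ the neighbours of $A$, the first face contributes $C=P_{00}$, $D=P_{10}$ and the second contributes $C'=P_{02}$, $D'=P_{12}$. The four outer points $C,D,C',D'$ are exactly $P_{00},P_{10},P_{02},P_{12}$, which are non-collinear by hypothesis, so Proposition~\ref{th-old} applies verbatim: this pair is affine symmetric with respect to $P_{01}P_{11}$ if and only if $CC'\parallel DD'$, i.e.\ $P_{00}P_{02}\parallel P_{10}P_{12}$, and the lines $AB,CD,C'D'$, i.e.\ $P_{01}P_{11}$, $P_{00}P_{10}$, $P_{02}P_{12}$, are concurrent or parallel. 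An entirely analogous identification for the second pair $(P_{10}P_{20}P_{21}P_{11},\,P_{11}P_{21}P_{22}P_{12})$, whose common edge is $P_{11}P_{21}$, uses $A=P_{11}$, $B=P_{21}$ and yields $C=P_{20}$, $D=P_{10}$, $C'=P_{22}$, $D'=P_{12}$; the outer points are now $P_{10},P_{20},P_{12},P_{22}$, again non-collinear by hypothesis, and Proposition~\ref{th-old} gives that this pair is affine symmetric if and only if $P_{20}P_{22}\parallel P_{10}P_{12}$ and the lines $P_{11}P_{21}$, $P_{10}P_{20}$, $P_{12}P_{22}$ are concurrent or parallel.

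Combining the two equivalences finishes the main statement. The segment $P_{10}P_{12}$ appears as $DD'$ in both pairs, so the two parallelism conditions chain into $P_{00}P_{02}\parallel P_{10}P_{12}\parallel P_{20}P_{22}$, while the two concurrency-or-parallelism conditions are precisely the two triples $P_{00}P_{10},P_{01}P_{11},P_{02}P_{12}$ and $P_{10}P_{20},P_{11}P_{21},P_{12}P_{22}$ named in the proposition. For the concluding ``moreover'' claim I would note that if no two faces of the net are coplanar, then in particular the two faces within each pair are not coplanar, so the moreover part of Proposition~\ref{th-old} applies to each pair; it lets us drop both the non-collinearity hypothesis on the outer points and the concurrency condition on $AB,CD,C'D'$, leaving only $CC'\parallel DD'$ per pair and hence only $P_{00}P_{02}\parallel P_{10}P_{12}\parallel P_{20}P_{22}$.

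The only place demanding genuine care is the bookkeeping of the vertex correspondence: one must ensure that $C,C'$ are attached to the same endpoint $B$ of the common edge and $D,D'$ to the other endpoint $A$, since this orientation choice is exactly what forces $CC'$ and $DD'$ to come out as the advertised ``horizontal'' segments $P_{00}P_{02},P_{10}P_{12},P_{20}P_{22}$ rather than their crossing counterparts; once this is fixed the argument is a direct invocation of Proposition~\ref{th-old}.
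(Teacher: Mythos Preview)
Your proof is correct and follows exactly the paper's intended route: the proposition is stated there as an immediate consequence of Proposition~\ref{th-old}, and you have simply spelled out the two applications with the vertex bookkeeping made explicit. The identifications $A,B,C,D,C',D'$ are consistent, the chaining via the shared segment $P_{10}P_{12}$ is handled correctly, and the ``moreover'' clause is reduced to the corresponding clause of Proposition~\ref{th-old} in the right way.
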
 

\begin{figure}[htbp]
  \centering
\begin{tikzpicture}[scale=0.165]
\draw[black, thin] (-2.50143, 1.26455) -- (4.02081, 5.3953) -- (9.81836, 6.3374) -- (14.09405, 1.91677) -- (15.10862, -4.75041) -- (4.45563, -7.64918) -- (-2.71884, -7.72165) -- (-4.89292, -4.46053) -- (-2.50143, 1.26455) -- (3.29612, -3.80831) -- (15.10862, -4.75041);
\draw[black, thin] (-2.71884, -7.72165) -- (3.29612, -3.80831) -- (9.81836, 6.3374);

\draw[red, dashed] (-4.89292, -4.46053) -- (-7.57429, -10.91031) -- (-2.71884, -7.72165);
\draw[red, dashed] (-7.57429, -10.91031) -- (4.45563, -7.64918);
\draw[orange, dashed] (4.02081, 5.3953) -- (12.71713, 10.8305) -- (9.81836, 6.3374);
\draw[orange, dashed] (12.71713, 10.8305) -- (14.09405, 1.91677);
\draw[blue, dashed] (-4.89292, -4.46053) -- (4.45563, -7.64918);
\draw[blue, dashed] (-2.50143, 1.26455) -- (15.10862, -4.75041);
\draw[blue, dashed] (4.02081, 5.3953) -- (14.09405, 1.91677);

\filldraw[black] (-2.50143, 1.26455) circle (4pt) node[anchor=east]{$P_{10}$};
\filldraw[black] (4.02081, 5.3953) circle (4pt) node[anchor=south east]{$P_{20}$};
\filldraw[black] (9.81836, 6.3374) circle (4pt) node[anchor=west]{$P_{21}$};
\filldraw[black] (14.09405, 1.91677) circle (4pt) node[anchor=west]{$P_{22}$};
\filldraw[black] (15.10862, -4.75041) circle (4pt) node[anchor=north]{$P_{12}$};
\filldraw[black] (4.45563, -7.64918) circle (4pt) node[anchor=north west]{$P_{02}$};
\filldraw[white] (-2.5, -10) circle (45pt) node[anchor=north]{};
\filldraw[black] (-2.71884, -7.72165) circle (4pt) node[anchor=north]{$P_{01}$};
\filldraw[black] (-4.89292, -4.46053) circle (4pt) node[anchor=east]{$P_{00}$};
\filldraw[black] (-7.57429, -10.91031) circle (4pt) node[anchor=east]{};
\filldraw[black] (3.29612, -3.80831) circle (4pt) node[anchor=north]{$P_{11}$};
\filldraw[black] (12.71713, 10.8305) circle (4pt) node[anchor=west]{};

\end{tikzpicture}\qquad 
\begin{tikzpicture}[scale=0.165]
\draw[black, thin] (0.83216, 6.48234) -- (9.81836, 6.3374) -- (18.44221, 0.53985); 
\draw[black, thin] (15.10862, -4.75041) -- (6.99205, -7.79412) -- (0.2524, -7.79412) -- (-2.35649, -4.533) -- (-2.50143, 1.26455) -- (3.29612, -3.80831) -- (15.10862, -4.75041);
\draw[black, thin] (0.2524, -7.79412) -- (3.29612, -3.80831);
\draw[orange, thin] (-2.50143, 1.26455) -- (0.83216, 6.48234);
\draw[orange, thin] (18.44221, 0.53985) -- (15.10862, -4.75041);
\draw[orange, thin] (9.81836, 6.3374) -- (3.29612, -3.80831);
\draw[orange, dashed] (4.09328, 11.62767) -- (0.83216, 6.48234);
\draw[orange, dashed] (9.81836, 6.3374) -- (12.71713, 10.8305);
\draw[orange, dashed] (18.44221, 0.53985) -- (21.48593, 5.17789);

\draw[red, dashed] (-2.35649, -4.533) -- (-2.21156, -11.12771) -- (0.2524, -7.79412);
\draw[red, dashed] (-2.21156, -11.12771) -- (6.99205, -7.79412);
\draw[blue, dashed] (-2.35649, -4.533) -- (6.99205, -7.79412);
\draw[blue, dashed] (-2.50143, 1.26455) -- (15.10862, -4.75041);
\draw[blue, dashed] (0.83216, 6.48234) -- (18.44221, 0.53985);

\filldraw[black] (-2.50143, 1.26455) circle (4pt) node[anchor=north]{};
\filldraw[black] (0.83216, 6.48234) circle (4pt) node[anchor=north]{};
\filldraw[black] (9.81836, 6.3374) circle (4pt) node[anchor=south]{};
\filldraw[black] (18.44221, 0.53985) circle (4pt) node[anchor=west]{};
\filldraw[black] (15.10862, -4.75041) circle (4pt) node[anchor=south]{};
\filldraw[black] (6.99205, -7.79412) circle (4pt) node[anchor=south]{};
\filldraw[black] (0.2524, -7.79412) circle (4pt) node[anchor=east]{};
\filldraw[black] (-2.35649, -4.533) circle (4pt) node[anchor=east]{};
\filldraw[black] (-2.21156, -11.12771) circle (4pt) node[anchor=east]{};
\filldraw[black] (3.29612, -3.80831) circle (4pt) node[anchor=west]{};

\end{tikzpicture}\ \ \
\begin{tikzpicture}[scale=0.161]
\draw[black, thin] (0.83216, 6.48234) -- (9.81836, 6.3374) -- (18.44221, 0.53985); 
\draw[red, thin] (15.10862, -4.75041) -- (8.51391, -9.0261);
\draw[black, thin] (8.51391, -9.0261) -- (-2.71884, -7.72165); 
\draw[red, thin] (-9.02368, -3.01114) -- (-2.50143, 1.26455);
\draw[black, thin] (-2.50143, 1.26455) -- (3.29612, -3.80831) -- (15.10862, -4.75041);
\draw[black, thin] (-2.71884, -7.72165) -- (-9.02368, -3.01114);
\draw[red, thin] (-2.71884, -7.72165) -- (3.29612, -3.80831);
\draw[orange, thin] (-2.50143, 1.26455) -- (0.83216, 6.48234);
\draw[orange, thin] (18.44221, 0.53985) -- (15.10862, -4.75041);
\draw[orange, thin] (9.81836, 6.3374) -- (3.29612, -3.80831);
\draw[orange, dashed] (4.09328, 11.62767) -- (0.83216, 6.48234);
\draw[orange, dashed] (9.81836, 6.3374) -- (12.71713, 10.8305);
\draw[orange, dashed] (18.44221, 0.53985) -- (21.48593, 5.17789);

\draw[blue, dashed] (8.51391, -9.0261) -- (-9.02368, -3.01114);
\draw[blue, dashed] (-2.50143, 1.26455) -- (15.10862, -4.75041);
\draw[blue, dashed] (0.83216, 6.48234) -- (18.44221, 0.53985);

\draw[red, dashed] (-9.02368, -3.01114) -- (-11.70504, -4.75041);
\draw[red, dashed] (-7.57429, -10.91031) -- (-2.71884, -7.72165);
\draw[red, dashed] (8.51391, -9.0261) -- (2.8613, -12.72204);

\filldraw[black] (-2.50143, 1.26455) circle (4pt) node[anchor=north]{};
\filldraw[black] (0.83216, 6.48234) circle (4pt) node[anchor=north]{};
\filldraw[black] (9.81836, 6.3374) circle (4pt) node[anchor=south]{};
\filldraw[black] (18.44221, 0.53985) circle (4pt) node[anchor=west]{};
\filldraw[black] (15.10862, -4.75041) circle (4pt) node[anchor=south]{};
\filldraw[black] (8.51391, -9.0261) circle (4pt) node[anchor=south]{};
\filldraw[black] (-2.71884, -7.72165) circle (4pt) node[anchor=east]{};
\filldraw[black] (-9.02368, -3.01114) circle (4pt) node[anchor=east]{};
\filldraw[black] (3.29612, -3.80831) circle (4pt) node[anchor=west]{};

\end{tikzpicture}
\caption{A geometric characterization of deformable $2\times 2$ nets of class (i) in Theorem~\ref{th-classification}. Blue lines are parallel. Red and orange triples of lines are either concurrent or parallel. See Proposition~\ref{lemma-three-pic}.}
\label{figure:theorem(i)}
\end{figure}
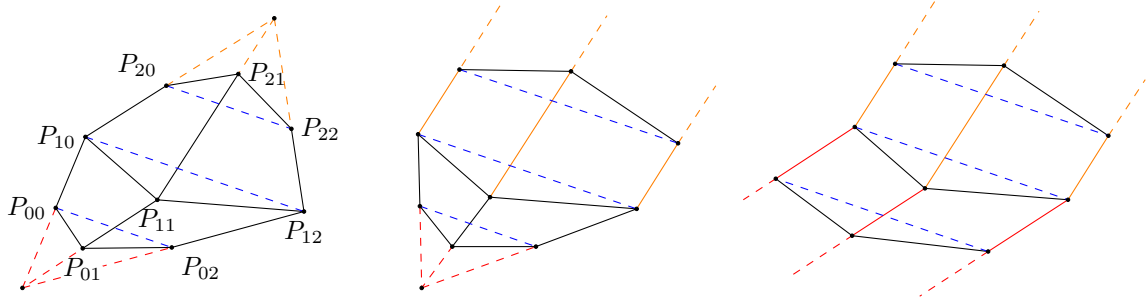


\subsubsection{Class~(ii)}
\label{subs-class(ii)}

The geometric description of class~(ii) is less straightforward. 
To motivate it, we first 
briefly discuss a necessary condition for 
a $2\times 2$ net to be deformable. 
This condition is actually the ``infinitesimal 
deformability'' studied in \cite[\S6.2]{isometric-isotropic}; cf.~\cite[\S2.2]{karpenkov-2010} and~\cite[\S2]{Schief2008}.

Let $P_{ij}(t)$ be a deformation of a $2\times 2$ net $P_{ij}$, where $0\le i,j\le 2$. Identify points in $\mathbb{R}^d$ with vectors. Assume that the derivatives $P_{ij}'(0)$ exist and do not all coincide. Consider the deformations $P_{ij}(t)P_{i+1,j}(t)P_{i+1,j+1}(t)P_{i,j+1}(t)$ of a particular face. These quadrilaterals have parallel sides and equal areas for all $t$. In particular, the derivative of the area at $t=0$ vanishes. 
By a known computation \cite[Theorem~13 and Eq.~(3)]{bobenko2010curvature}, the latter is equivalent to quadrilaterals $P_{ij}(0)P_{i+1,j}(0)P_{i+1,j+1}(0)P_{i,j+1}(0)$ and $P_{ij}'(0)P_{i+1,j}'(0)P_{i+1,j+1}'(0)P_{i,j+1}'(0)$
being \emph{dual}, i.e., having parallel corresponding sides and parallel non-corresponding diagonals (see Figure~\ref{figure:dual-of-2-quads}):
$$
P_{ij}(0)P_{i+1,j+1}(0)\parallel P_{i+1,j}'(0)P_{i,j+1}'(0),
\qquad
P_{i+1,j}(0)P_{i,j+1}(0)\parallel
P_{ij}'(0)P_{i+1,j+1}'(0).
$$

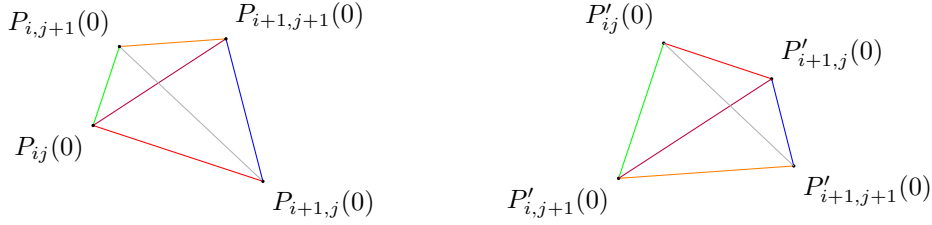
\begin{figure}[!t]
  \centering
\begin{tikzpicture}[scale=0.035]

\coordinate (A) at (54.48536, -61.2148);
\coordinate (B) at (40.45495, -7.05598);
\coordinate (C) at (0, -10);
\coordinate (D) at (-10, -40);

\draw[blue, thin] (A) -- (B);
\draw[orange, thin] (B) -- (C);
\draw[green, thin] (C) -- (D);
\draw[red, thin] (D) -- (A);

\draw[purple, name path=line 1] (B) -- (D);
\draw[gray!60, name path=line 2] (A) -- (C);

\filldraw[black] (A) circle (12pt) node[anchor=north west]{$P_{i+1,j}(0)$};
\filldraw[black] (B) circle (12pt) node[anchor=south west]{$P_{i+1,j+1}(0)$};
\filldraw[black] (C) circle (12pt) node[anchor=south east]{$P_{i,j+1}(0)$};
\filldraw[black] (D) circle (12pt) node[anchor=north east]{$P_{ij}(0)$};

\end{tikzpicture}
\qquad\qquad
\begin{tikzpicture}[scale=0.06]

\coordinate (C) at (0, -10);
\coordinate (D) at (-10, -40);
\coordinate (B') at (23.91368, -17.93708);
\coordinate (A') at (28.81781, -37.24708);

\draw[blue, thin] (A') -- (B');
\draw[orange, thin] (A') -- (D);
\draw[green, thin] (C) -- (D);
\draw[red, thin] (C) -- (B');

\filldraw[black] (C) circle (7pt) node[anchor=south east]{$P'_{ij}(0)$};
\filldraw[black] (D) circle (7pt) node[anchor=north east]{$P'_{i,j+1}(0)$};
\filldraw[black] (B') circle (7pt) node[anchor=south west]{$P'_{i+1,j}(0)$};
\filldraw[black] (A') circle (7pt) node[anchor=north west]{$P'_{i+1,j+1}(0)$};

\draw[purple, name path=line 1, thin] (B') -- (D);
\draw[gray!60, name path=line 2, thin] (A') -- (C);

\end{tikzpicture}
\caption{Dual quadrilaterals. Corresponding sides (with the same color) are parallel and non-corresponding diagonals (also with the same color) are parallel. See Subsection~\ref{subs-class(ii)}.}
\label{figure:dual-of-2-quads}
\end{figure}

This holds for all faces, hence points $P_{ij}'(0)$ form a $2\times 2$ net, and the nets $P_{ij}$ and $P_{ij}'(0)$ 
are \emph{Christoffel duals}, i.e., their corresponding faces are dual. See Figure~\ref{fig:christ-dual}. 
A net admitting a Christoffel dual is called a \emph{K\oe nigs} net. We conclude that an (``infinitesimally'') deformable $2\times 2$ net is a K\oe nigs~net. 

\begin{figure}[htbp]
\centering

\begin{tikzpicture}[scale=1.7]

\coordinate (P) at (0.141595, 0.064912);
\coordinate (P') at (0.833477, 0.183469);
\coordinate (D) at (2.059665, 0.087001);
\coordinate (C) at (0.009566, 1.019064);
\coordinate (B) at (0.975775, 0.830942);
\coordinate (C') at (2.992722, 1.052864);
\coordinate (D') at (0.183345, 2.11201);
\coordinate (A) at (0.863328, 1.696711);
\coordinate (E) at (2.113132, 2.121726);

\fill[lime!8] (B) -- (C) -- (D') -- (A) -- cycle;
\fill[britishracinggreen!5] (B) -- (A) -- (E) -- (C') -- cycle;
\fill[antiquebrass!8] (B) -- (C') -- (D) -- (P') -- cycle;
\fill[babyblue!5] (B) -- (P') -- (P) -- (C) -- cycle;

\draw[black, thin] (D') -- (A) -- (E);
\draw[black, thin] (D) -- (P') -- (P);
\draw[black, thin] (C) -- (B) -- (C');
\draw[red, thin] (P) -- (C);
\draw[blue, thin] (C) -- (D');
\draw[orange, thin] (E) -- (C');
\draw[green, very thin] (C') -- (D);
\draw[gray!60, thin] (P') -- (B);
\draw[purple, thin] (B) -- (A);

\filldraw[black] (P) circle (0.5pt) node[anchor=north east]{$P_{00}$};
\filldraw[black] (P') circle (0.5pt) node[anchor=north]{$P_{10}$};
\filldraw[black] (D) circle (0.5pt) node[anchor=north west]{$P_{20}$};
\filldraw[black] (C) circle (0.5pt) node[anchor=east]{$P_{01}$};
\filldraw[black] (B) circle (0.5pt) node[anchor=north west]{$P_{11}$};
\filldraw[black] (C') circle (0.5pt) node[anchor=west]{$P_{21}$};
\filldraw[black] (D') circle (0.5pt) node[anchor=south east]{$P_{02}$};
\filldraw[black] (A) circle (0.5pt) node[anchor=south]{$P_{12}$};
\filldraw[black] (E) circle (0.5pt) node[anchor=south west]{$P_{22}$};


\end{tikzpicture}
\qquad\qquad
\begin{tikzpicture}[scale=1.18]

\coordinate (P) at (-3.38159, 1.0081);
\coordinate (P') at (-2.02127, 0.1827);
\coordinate (D) at (0.58246, 1.06833);
\coordinate (C) at (-3.20801, 2.09565);
\coordinate (B) at (-2.24445, 1.91144);
\coordinate (C') at (-0.29254, 2.13816);
\coordinate (D') at (-3.33554, 3.02024);
\coordinate (A) at (-1.9398, 3.26467);
\coordinate (E) at (0.586, 3.04858);

\fill[babyblue!5] (B) -- (C) -- (D') -- (A) -- cycle;
\fill[antiquebrass!8] (B) -- (A) -- (E) -- (C') -- cycle;
\fill[britishracinggreen!5] (B) -- (C') -- (D) -- (P') -- cycle;
\fill[lime!8]  (B) -- (P') -- (P) -- (C) -- cycle;

\draw[black, thin] (D') -- (A) -- (E);
\draw[black, thin] (D) -- (P') -- (P);
\draw[black, thin] (C) -- (B) -- (C');
\draw[gray!60, thin] (B) -- (A);
\draw[purple, thin] (P') -- (B);
\draw[blue, thin] (P) -- (C);
\draw[red, thin] (C) -- (D');
\draw[green, very thin] (E) -- (C');
\draw[orange, thin] (C') -- (D);

\filldraw[black] (P) circle (0.65pt) node[anchor=north east]{$P_{02}'$};
\filldraw[black] (P') circle (0.65pt) node[anchor=north]{$P_{12}'$};
\filldraw[black] (D) circle (0.65pt) node[anchor=north west]{$P_{22}'$};
\filldraw[black] (C) circle (0.65pt) node[anchor=east]{$P_{01}'$};
\filldraw[black] (B) circle (0.65pt) node[anchor=north west]{$P_{11}'$};
\filldraw[black] (C') circle (0.65pt) node[anchor=west]{$P_{21}'$};
\filldraw[black] (D') circle (0.65pt) node[anchor=south east]{$P_{00}'$};
\filldraw[black] (A) circle (0.65pt) node[anchor=south]{$P_{10}'$};
\filldraw[black] (E) circle (0.65pt) node[anchor=south west]{$P_{20}'$};

\end{tikzpicture}
\caption{Two $2\times2$ nets that are Christoffel duals. Corresponding faces (of the same color) are~dual.}
\label{fig:christ-dual}
\end{figure}
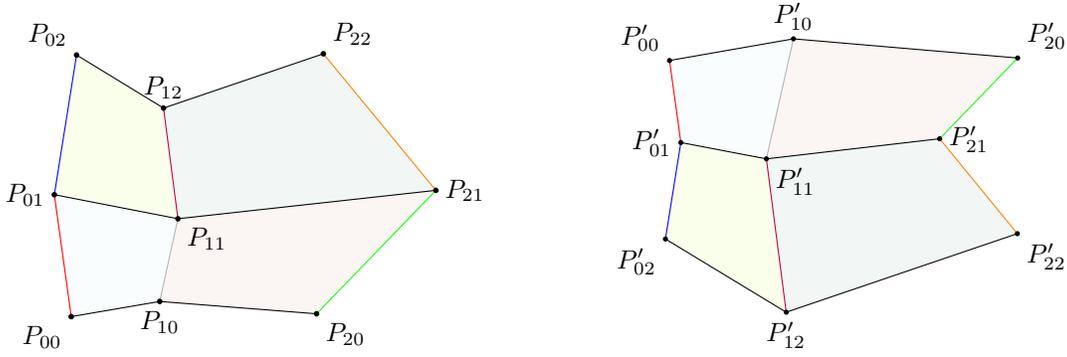

Several geometric characterizations of K\oe nigs nets are known. See Figure~\ref{figure:prop-inter-3-lines}. A $2\times2$ net with the vertices $P_{ij}$, where $0\leq i,j\leq2$, is a K\oe nigs net, if and only if
    \begin{equation}
        \label{eq-three-lines2}
        \frac{P_{10}Q_{00}}{Q_{00}P_{01}}\cdot\frac{P_{01}Q_{01}}{Q_{01}P_{12}}\cdot\frac{P_{12}Q_{11}}{Q_{11}P_{21}}\cdot\frac{P_{21}Q_{10}}{Q_{10}P_{10}}=1,
    \end{equation}
where $Q_{ij}:=P_{ij}P_{i+1,j+1}\cap P_{i+1,j}P_{i,j+1}$
for $0\leq i,j\leq1$ \cite[Theorem~2.25]{bobenko-2008-ddg}. If no three points among $P_{01}, P_{12}, P_{21}, P_{10}$ are collinear, then~\eqref{eq-three-lines2} is equivalent to the three lines $Q_{00}Q_{10}, P_{01}P_{21}, Q_{01}Q_{11}$ 
being concurrent or parallel \cite[Theorem~9.12]{bobenko-2008-ddg}. If the points $P_{01}, P_{12}, P_{21}, P_{10}$ are not coplanar, then the latter condition 
is equivalent to the coplanarity of $Q_{00}, Q_{01}, Q_{10}, Q_{11}$~\cite[Theorem~2.26]{bobenko-2008-ddg}. 

\begin{proposition} \label{p-Koenigs}
A $2\times2$ net satisfying one of conditions~(i)--(ii) of Theorem~\ref{th-classification} is a K\oe nigs~net.
\end{proposition}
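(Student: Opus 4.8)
The plan is to reduce both cases to the ratio criterion \eqref{eq-three-lines2}: I will show that each of (i) and (ii) forces
$$
\frac{P_{10}Q_{00}}{Q_{00}P_{01}}\cdot\frac{P_{01}Q_{01}}{Q_{01}P_{12}}\cdot\frac{P_{12}Q_{11}}{Q_{11}P_{21}}\cdot\frac{P_{21}Q_{10}}{Q_{10}P_{10}}=1,
$$
which by the cited \cite[Theorem~2.25]{bobenko-2008-ddg} is precisely the K\oe nigs property. The starting point is a formula for the opposite ratio in terms of diagonal-division ratios of a single face. Each $Q_{ij}$ is the intersection of the two diagonals of the face $f_{ij}$, and these diagonals split $f_{ij}$ into four triangles meeting at $Q_{ij}$; since the two triangles used to form an opposite ratio share a vertical angle at $Q_{ij}$, the opposite ratio with respect to a side equals the product of the two ratios into which $Q_{ij}$ divides the two diagonals (measured toward the endpoints of that side). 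I would abbreviate the four factors in the display above as $u_1,u_2,u_3,u_4$; these are the division ratios along the diagonals $P_{10}P_{01},P_{01}P_{12},P_{12}P_{21},P_{21}P_{10}$, i.e. along the sides of the quadrilateral $P_{10}P_{01}P_{12}P_{21}$. The remaining diagonals $P_{11}P_{00},P_{11}P_{20},P_{11}P_{02},P_{11}P_{22}$ all pass through the common vertex $P_{11}$, and I would write $w_{00},w_{10},w_{01},w_{11}$ for the division ratios along them. With this notation every opposite ratio across an interior edge becomes a monomial in the eight quantities $u_i,w_{ij}$.

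For class~(ii), I would write the four equalities of opposite ratios, one per interior edge $P_{10}P_{11},P_{01}P_{11},P_{11}P_{21},P_{11}P_{12}$. In the $u,w$ notation they read $u_1w_{00}=w_{10}/u_4$, $\ w_{00}/u_1=u_2w_{01}$, $\ u_4w_{10}=w_{11}/u_3$, and $w_{01}/u_2=u_3w_{11}$. Eliminating $w_{00},w_{10},w_{01},w_{11}$ by back-substitution around the cycle collapses the system to $(u_1u_2u_3u_4)^2=1$. Since every face is convex, each $Q_{ij}$ lies in the interior of its face, so all eight ratios are positive; hence $u_1u_2u_3u_4=1$, which is \eqref{eq-three-lines2}.

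For class~(i), I would instead use the remark following the definition of the opposite ratio: two affine symmetric quadrilaterals have equal opposite ratios with respect to every pair of corresponding sides. Taking the pairing of Proposition~\ref{lemma-three-pic}, the affine symmetry of $(f_{00},f_{01})$, which fixes the common edge $P_{01}P_{11}$ and sends $P_{00}\mapsto P_{02}$, $P_{10}\mapsto P_{12}$, yields two equalities — for the common edge $P_{01}P_{11}$ and for the corresponding edges $P_{10}P_{11}$ and $P_{11}P_{12}$ — which translate to $w_{00}/u_1=u_2w_{01}$ and $u_1w_{00}=w_{01}/u_2$, forcing $(u_1u_2)^2=1$. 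The affine symmetry of $(f_{10},f_{11})$ forces $(u_3u_4)^2=1$ in the same way. Positivity again gives $u_1u_2=u_3u_4=1$, so their product is $1$. The alternative pairing of the four faces is handled identically after interchanging the two index directions.

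I expect the main difficulty to be bookkeeping rather than anything conceptual: one must fix cyclic vertex orders and orientations so that each opposite ratio is converted into the correct monomial in $u_i,w_{ij}$, and in class~(i) one must correctly match corresponding sides under each affine symmetry. The only genuinely substantive point is that the elimination produces a \emph{squared} product, pinning down $u_1u_2u_3u_4$ only up to sign; this is resolved cleanly by the convexity built into the definition of a net, which guarantees that all the division ratios are positive.
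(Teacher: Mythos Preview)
Your proposal is correct and takes essentially the same approach as the paper: both reduce to criterion~\eqref{eq-three-lines2}, express the four factors through opposite ratios (the paper via $u_i=\sqrt{r(i,i-1)/r(i,i+1)}$, you via the equivalent $u_i,w_{ij}$ bookkeeping), and then verify that (i) or (ii) forces the product to be $1$; your squared identity $(u_1u_2u_3u_4)^2=1$ is exactly the paper's quotient~\eqref{eq:rel-opp-rat=1}, and the positivity-from-convexity step resolving the sign is implicit in the paper's use of positive ratios.
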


We give direct elementary proof relying neither on the infinitesimal deformability nor the duality.

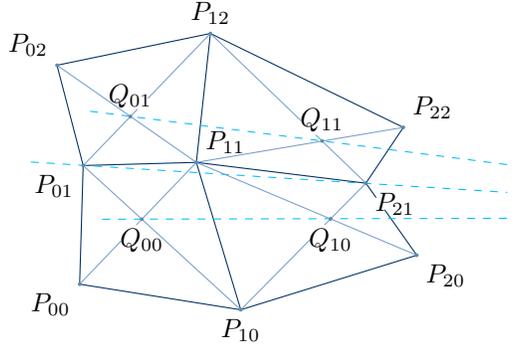
\begin{figure}[htbp]
  \centering
\begin{tikzpicture}[scale=0.7]

\coordinate (B3) at (1.77675, -17.15178);
\coordinate (A3) at (4.83651, -17.6383);
\coordinate (B2) at (8.17937, -16.60145);
\coordinate (A4) at (1.84386, -14.89677);
\coordinate (Q) at (3.99291, -14.8369);
\coordinate (A2) at (7.21294, -15.23233);
\coordinate (B4) at (1.34722, -12.99074);
\coordinate (A1) at (4.25995, -12.38672);
\coordinate (B1) at (7.92434, -14.17194);

\coordinate (Q1) at (2.95795, -15.91689);
\coordinate (Q2) at (2.7398, -13.965);
\coordinate (Q3) at (6.38073, -14.43);
\coordinate (Q4) at (6.54181, -15.91);

\coordinate (X1) at (1.97618, -13.86471);
\coordinate (Y1) at (10.13909, -14.91019);
\coordinate (X2) at (0.85058, -14.82965);
\coordinate (Y2) at (9.89748, -15.40683);
\coordinate (X3) at (2.20628, -15.91689);
\coordinate (Y3) at (10.0854, -15.899);

\draw[coolblack, thin] (A1) -- (Q) -- (A2) -- (B2) -- (A3) -- (B3) -- (A4) -- (B4) -- (A1) -- (B1) -- (A2);
\draw[coolblack, thin] (A4) -- (Q) -- (A3);
\draw[deepskyblue, thin, dashed] (X1) -- (Y1);
\draw[deepskyblue, thin, dashed] (X2) -- (Y2);
\draw[deepskyblue, thin, dashed] (X3) -- (Y3);

\draw[darkpastelblue,name path=line 1] (Q) -- (B4);
\draw[darkpastelblue,name path=line 2] (A1) -- (A4);

\draw[darkpastelblue,name path=line 3] (Q) -- (B1);
\draw[darkpastelblue,name path=line 4] (A1) -- (A2);

\draw[darkpastelblue,name path=line 5] (Q) -- (B2);
\draw[darkpastelblue,name path=line 6] (A2) -- (A3);

\draw[darkpastelblue,name path=line 7] (Q) -- (B3);
\draw[darkpastelblue,name path=line 8] (A3) -- (A4);

\filldraw[airforceblue] (Q) circle (0.7pt); 
\filldraw[black] (Q) circle (0pt) node[anchor=south west]{$P_{11}$};
\filldraw[airforceblue] (A2) circle (0.7pt);
\filldraw[white] (7.45, -15.65) circle (4pt);
\filldraw[black] (A2) circle (0pt) node[anchor=north west]{$P_{21}$};
\filldraw[airforceblue] (A3) circle (0.7pt);
\filldraw[black] (A3) circle (0pt) node[anchor=north]{$P_{10}$};
\filldraw[airforceblue] (A4) circle (0.7pt);
\filldraw[black] (A4) circle (0pt) node[anchor=north east]{$P_{01}$};
\filldraw[airforceblue] (A1) circle (0.7pt);
\filldraw[black] (A1) circle (0pt) node[anchor=south]{$P_{12}$};

\filldraw[airforceblue] (B1) circle (0.7pt);
\filldraw[black] (B1) circle (0pt) node[anchor=south west]{$P_{22}$};
\filldraw[airforceblue] (B2) circle (0.7pt);
\filldraw[black] (B2) circle (0pt) node[anchor=north west]{$P_{20}$};
\filldraw[airforceblue] (B3) circle (0.7pt);
\filldraw[black] (B3) circle (0pt) node[anchor=north east]{$P_{00}$};
\filldraw[airforceblue] (B4) circle (0.7pt);
\filldraw[black] (B4) circle (0pt) node[anchor=south east]{$P_{02}$};

\filldraw[airforceblue] (Q1) circle (0.7pt);
\filldraw[white] (2.79, -16.26) circle (6.5pt);
\filldraw[black] (Q1) circle (0pt) node[anchor=north]{$Q_{00}$};
\filldraw[airforceblue] (Q2) circle (0.7pt);
\filldraw[white] (3, -13.75) circle (4pt);
\filldraw[black] (Q2) circle (0pt) node[anchor=south]{$Q_{01}$};
\filldraw[airforceblue] (Q3) circle (0.7pt);
\filldraw[white] (6.2, -14.1) circle (6pt);
\filldraw[black] (Q3) circle (0pt) node[anchor=south]{$Q_{11}$};
\filldraw[airforceblue] (Q4) circle (0.7pt);
\filldraw[white] (6.38, -16.25) circle (6.3pt);
\filldraw[black] (Q4) circle (0pt) node[anchor=north]{$Q_{10}$};






\end{tikzpicture}\\
\caption{Three dashed lines for a deformable $2\times 2$ net are either concurrent or parallel.}
\label{figure:prop-inter-3-lines}
\end{figure}

\begin{proof} 
%
Enumerate the faces containing the points $Q_{00},Q_{01},Q_{11},Q_{10}$ in the listed order so that the indices are cyclic modulo $4$.
Denote by $r(i,j)$ the opposite ratio of $i$-th face with respect to its common edge with $j$-th face. Then ${P_{10}Q_{00}}/{Q_{00}P_{01}}=\sqrt{r(1,4)/r(1,2)}$ by a direct computation (see~\eqref{eq-ratio-from-opposite-ratio} below).
Substituting similar expressions for the other factors in~\eqref{eq-three-lines2}, we rewrite it in an equivalent form:
\begin{equation}
    \label{eq:rel-opp-rat=1}
        \frac{r(1,2)r(2,3)r(3,4)r(4,1)}{r(1,4)r(2,1)r(3,2)r(4,3)}=1.
\end{equation}

If the net is from class~(i), then $r(i,j-1)=r(i-1,j)$ either for all even $i,j$ or for all odd $i,j$, because affine symmetric quadrilaterals have equal opposite ratios with respect to corresponding edges.

If the net is from class~(ii), then $r(i,j)=r(j,i)$ for all $i,j$ different by $1$.

In both cases, the numerator and the denominator of~\eqref{eq:rel-opp-rat=1} cancel out completely, as required.
\end{proof}

To describe class~(ii) geometrically, we return to the duality. Notice that a Christoffel dual of a $2\times 2$ net is one of its Comberscure transformations. Now we show the following: if this particular Comberscure transformation is area-preserving, then there is a whole family of area-preserving Comberscure transformations, and this happens exactly for nets from class~(ii). 

\begin{proposition} \label{ex-ii-geo}
    If a $2\times 2$ net $P_{ij}$ ($0\le i,j\le 2$) has a Christoffel dual $P_{ij}^*$ ($0\le i,j\le 2$) with the same areas of corresponding faces, then
    \begin{equation} \label{eq-ii-geo}
    P_{ij}(t)=P_{ij}\cosh t +P_{ij}^*\sinh t,
    \qquad t\in [0,\varepsilon], 0\le i,j\le 2,
    \end{equation}
    is a family of area-preserving Comberscure transformations for sufficiently small $\varepsilon>0$. 
\end{proposition}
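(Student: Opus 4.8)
The plan is to verify the two requirements of an area-preserving Combescure transformation — that corresponding edges stay parallel and that corresponding faces keep equal areas — and then to check that \eqref{eq-ii-geo} really produces honest, pairwise non-congruent nets for small $t$. Parallelism is immediate: since $P_{ij}^*$ is a Christoffel dual, each dual edge is parallel to the corresponding edge of $P_{ij}$, say $P_{i+1,j}^*-P_{ij}^*=\lambda\,(P_{i+1,j}-P_{ij})$ for a scalar $\lambda$; then the corresponding edge of $P_{ij}(t)$ equals $(\cosh t+\lambda\sinh t)(P_{i+1,j}-P_{ij})$, again parallel to it, and similarly in the other parameter direction. Hence every face of $P_{ij}(t)$ has edges parallel to the original face, so it is planar and closes up, and $P_{ij}(t)$ is a Combescure transform of $P_{ij}$.

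The area statement is the core of the argument. Fix one face, let $u,v$ denote its diagonal vectors and $u^*,v^*$ the diagonals of the dual face. Because $u^*\parallel v$ and $v^*\parallel u$, all four vectors lie in the plane spanned by $u,v$, on which I use the oriented area form $\times$. The signed area of the deformed face is $\tfrac12\,(u\cosh t+u^*\sinh t)\times(v\cosh t+v^*\sinh t)$, and the coefficient of $\sinh t\cosh t$ in its expansion is $\tfrac12(u\times v^*+u^*\times v)$. This vanishes precisely because the defining property of a Christoffel dual is that the non-corresponding diagonals are parallel. Thus the signed area reduces to $S\cosh^2 t+S^*\sinh^2 t$, where $S=\tfrac12\,u\times v$ and $S^*=\tfrac12\,u^*\times v^*$ are the signed areas of the original and of the dual face.

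The main obstacle is the sign. To obtain the constant value $S$ I must show $S^*=-S$, whereas equality of areas only gives $|S^*|=|S|$; so the crux is that a convex quadrilateral and its Christoffel dual have signed areas of opposite sign. Writing $u^*=\mu v$ and $v^*=\nu u$, one has $S^*=-\mu\nu\,S$, so it suffices to prove $\mu\nu>0$ (which is invariant under rescaling the dual, in particular under $P^*\mapsto -P^*$). I would check this by placing the intersection of the diagonals of the face at the origin with the diagonals along the axes; the four edge-parallelism relations then determine the dual quadrilateral up to scale, and one reads off that $\mu$ and $\nu$ must have the same sign, equivalently that the dual vertices run around the origin in the order opposite to the original. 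Together with $|S^*|=|S|$ this forces $\mu\nu=1$, hence $S^*=-S$ and $\mathrm{area}(t)=S(\cosh^2 t-\sinh^2 t)=S$ for every face.

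It remains to note that convexity of the faces is an open condition, so there is an $\varepsilon>0$ for which each $P_{ij}(t)$, $t\in[0,\varepsilon]$, is a genuine net, with all face areas preserved by the previous paragraph. Finally the family is non-constant up to congruence, since the edges rescale by the factors $\cosh t+\lambda\sinh t$, whose values of $\lambda$ cannot all coincide — a common factor would make $P_{ij}^*$ a homothetic copy of $P_{ij}$, which is incompatible with the swapped-diagonal dual relation, as the diagonals of a convex quadrilateral are never parallel. This shows \eqref{eq-ii-geo} is a one-parameter family of area-preserving Combescure transformations.
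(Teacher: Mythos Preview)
Your proof is correct and follows essentially the same route as the paper's: expand the area of each face quadratically in $(\cosh t,\sinh t)$, observe that the cross term vanishes by the swapped-diagonal property of the Christoffel dual, and use that dual faces carry opposite signed area to conclude $S\cosh^2 t+S^*\sinh^2 t=S$. The paper packages the last two facts as citations (vanishing mixed area and opposite orientation), whereas you compute them directly via the diagonal vectors and the sign of $\mu\nu$; you also spell out the convexity and non-congruence checks that the paper leaves implicit.
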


\begin{proof}
Clearly, the 
nets $P_{ij}$, $P_{ij}^*$, and $P_{ij}(t)$ are parallel. Let $f$, $f^*$, and $f(t)$ be their corresponding faces. Dual quadrilaterals $f$ and $f^*$ have opposite orientation, hence opposite oriented areas $\mathrm{Area}(f^*)=-\mathrm{Area}(f)$. Their mixed area $\mathrm{Area}(f,f^*)=0$ by \cite[Theorem~13]{bobenko2010curvature}. Then by \cite[Eq.~(3)]{bobenko2010curvature}, we get
\begin{align*}
    \mathrm{Area}(f(t))=\mathrm{Area}(f)\cosh^2 t +\mathrm{Area}(f^*)\sinh^2 t +2\mathrm{Area}(f,f^*)\cosh t\sinh t =\mathrm{Area}(f)=\mathrm{const}.
\end{align*}
\end{proof}

In this construction, opposite ratios arise as follows.

\begin{proposition} \label{p-dual-with-equal-areas}
    If two dual convex quadrilaterals $f$ and $f^*$ have equal areas, then the ratio of their corresponding sides $e$ and $e^*$ equals the square root of the opposite ratio of $f$ with respect to $e$.
\end{proposition}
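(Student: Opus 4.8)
The plan is to adopt coordinates adapted to the diagonals of $f$, extract the dual $f^*$ from the parallelism conditions, and then impose the equal-area hypothesis to pin down the single remaining scale. I would place the diagonal intersection $Q$ of $f=ABCD$ at the origin, let $\vec u$ and $\vec v$ be unit vectors along the two diagonals, and write $A=a\vec u$, $C=-c\vec u$, $B=b\vec v$, $D=-d\vec v$ with $a,b,c,d>0$ by convexity. A one-line computation of two triangle areas then gives the opposite ratio of $f$ with respect to $e=AB$ as
\[
r_e=\frac{\mathrm{Area}(AQB)}{\mathrm{Area}(CQD)}=\frac{ab}{cd}.
\]

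Next I would determine $f^*$. Because $f^*$ is dual, its diagonals are parallel to the \emph{non}-corresponding diagonals of $f$, so with $Q^*$ at the origin I may write $A^*=\alpha\vec v$, $C^*=-\gamma\vec v$, $B^*=\beta\vec u$, $D^*=-\delta\vec u$ (giving $A^*C^*\parallel BD$ and $B^*D^*\parallel AC$). Requiring each of the four sides $B^*-A^*$, $C^*-B^*$, $D^*-C^*$, $A^*-D^*$ to be parallel to the corresponding side of $f$ produces four proportionality relations which, remarkably, collapse to the single chain
\[
\alpha a=\beta b=\gamma c=\delta d=:k.
\]
In words, the segments $Q^*$ cuts on the diagonals of $f^*$ are reciprocal to those $Q$ cuts on the diagonals of $f$, up to one common factor $k$. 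I expect this to be the main obstacle: one must verify that the four parallelism conditions are mutually compatible and genuinely reduce to a single positive scalar $k$ (neither over- nor under-determining $f^*$), while tracking signs so that the correct orientation---dual quadrilaterals are oppositely oriented---is selected.

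With this reciprocal structure in hand the remainder is bookkeeping. From $B^*-A^*=\tfrac{k}{b}\vec u-\tfrac{k}{a}\vec v$ and $B-A=-a\vec u+b\vec v$ one reads off $B^*-A^*=-\tfrac{k}{ab}(B-A)$, hence $|e^*|/|e|=k/(ab)$. Writing $\mathrm{Area}=\tfrac12\sin\theta\cdot(\text{product of the two diagonal lengths})$, with $\theta$ the angle between $\vec u$ and $\vec v$, gives $\mathrm{Area}(f)=\tfrac12\sin\theta\,(a+c)(b+d)$ and $\mathrm{Area}(f^*)=\tfrac12\sin\theta\,k^2(a+c)(b+d)/(abcd)$; the equal-area hypothesis cancels $(a+c)(b+d)$ and forces $k^2=abcd$. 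Substituting $k=\sqrt{abcd}$ yields
\[
\frac{|e|}{|e^*|}=\frac{ab}{k}=\sqrt{\frac{ab}{cd}}=\sqrt{r_e},
\]
which is exactly the assertion; running the identical computation on the other three sides reproduces their opposite ratios and confirms the statement holds for every corresponding pair of sides at once.
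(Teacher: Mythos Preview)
Your argument is correct and is essentially the paper's own proof: both set up coordinates along the diagonals of $f$, observe that the dual has the reciprocal diagonal segments $\alpha a=\beta b=\gamma c=\delta d$, and then match areas. The only cosmetic difference is that the paper normalizes $\alpha\beta\gamma\delta=1$ at the outset (and invokes uniqueness of the dual up to scaling) whereas you carry the free scale $k$ and solve $k^2=abcd$ at the end; the computations are otherwise identical.
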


\begin{proof}
(Cf.~\cite[Proof of 
Lemma~2.20]{bobenko-2008-ddg}.) Let $A,B,C,D$ be consecutive vertices of $f$ so that $AB=e$, let $Q$
be the intersection point of the diagonals, $e_1$ and $e_2$ be some vectors along the diagonals,
$$
\overrightarrow{QA}=\alpha e_1, \quad
\overrightarrow{QB}=\beta e_2, \quad
\overrightarrow{QC}=\gamma e_1, \quad
\overrightarrow{QD}=\delta e_2.
$$
Since $f$ is convex, we may assume that $\alpha\beta\gamma\delta=1$ without loss of generality.

Construct a quadrilateral $A^*B^*C^*D^*$ with the intersection of the diagonals $Q^*$ by setting 
$$
\overrightarrow{Q^*A^*}=-\frac{e_2}{\alpha}, \quad
\overrightarrow{Q^*B^*}=-\frac{e_1}{\beta}, \quad
\overrightarrow{Q^*C^*}=-\frac{e_2}{\gamma}, \quad
\overrightarrow{Q^*D^*}=-\frac{e_1}{\delta}
\qquad\text{\cite[Eq.~(2.28)]{bobenko-2008-ddg}}.
$$
The resulting quadrilateral is dual to $f$ and has the same area because $\alpha\beta\gamma\delta=1$.
Since $f^*$ is unique up to scaling and translation \cite[Lemma~2.20]{bobenko-2008-ddg}, we may assume that 
$f^*=A^*B^*C^*D^*$ so that $e^*=A^*B^*$. 

By the similarity of triangles $ABQ$ and $A^*B^*Q^*$, the ratio of $e$ and $e^*$ equals $AB/A^*B^*=|\alpha\beta|$. The opposite ratio of $f$ with respect to $e$ is $|\alpha\beta|/|\gamma\delta|=|\alpha\beta|^2$, as required.
\end{proof}

As a consequence, we get the following 
characterization of class~(ii). See Figures~\ref{fig-defs-i-ii}(right) and~\ref{fig:christ-dual}.

\begin{proposition}
    \label{p-class-ii}
    For a $2\times 2$ net, the following two conditions are equivalent: 
\begin{itemize}
    \item each pair of faces with a common edge has equal opposite ratios with respect to that edge;
    \item the net has a Christoffel dual with the same areas of corresponding faces.
\end{itemize}
\end{proposition}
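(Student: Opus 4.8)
The plan is to establish the two implications separately, both resting on Proposition~\ref{p-dual-with-equal-areas} together with one structural observation: in a Christoffel dual \emph{net}, a common edge $e$ of two adjacent faces has a \emph{single} dual edge $e^*$, shared by the two corresponding dual faces. Throughout I write $r_f(e)$ for the opposite ratio of a face $f$ with respect to its edge $e$.

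For the implication ``equal-area Christoffel dual $\Rightarrow$ condition (ii)'' I would proceed directly. Suppose $P_{ij}$ has a Christoffel dual $P_{ij}^*$ with $\mathrm{Area}(f^*)=\mathrm{Area}(f)$ in absolute value for every face. Take two faces $f,g$ sharing an edge $e$; their duals $f^*,g^*$ then share the dual edge $e^*$. Applying Proposition~\ref{p-dual-with-equal-areas} to the dual pair $(f,f^*)$ gives $|e|/|e^*|=\sqrt{r_f(e)}$, and applying it to $(g,g^*)$ gives $|e|/|e^*|=\sqrt{r_g(e)}$. Since $|e|$ and $|e^*|$ are literally the same quantities in both equalities, I conclude $r_f(e)=r_g(e)$; letting $e$ range over the four interior edges yields condition (ii).

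For the converse, ``condition (ii) $\Rightarrow$ equal-area Christoffel dual,'' the first step is to invoke Proposition~\ref{p-Koenigs}: a net satisfying (ii) is a Kœnigs net, hence a Christoffel dual $P^*$ exists. Because the dual of a single quadrilateral is unique up to scaling and translation \cite[Lemma~2.20]{bobenko-2008-ddg}, each dual face $f_k^*$ must be $\lambda_k$ times the equal-area ``canonical'' dual of $f_k$ constructed in Proposition~\ref{p-dual-with-equal-areas}, for some scalar $\lambda_k$. The canonical dual edge corresponding to $e$ has length $|e|/\sqrt{r_{f_k}(e)}$, so matching the common dual edge $e^*$ of two adjacent faces $f_k,f_l$ gives $|\lambda_k|\,|e|/\sqrt{r_{f_k}(e)}=|\lambda_l|\,|e|/\sqrt{r_{f_l}(e)}$. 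Condition (ii), i.e.\ $r_{f_k}(e)=r_{f_l}(e)$, then forces $|\lambda_k|=|\lambda_l|$. Since the four faces form a connected $4$-cycle of adjacencies around the central vertex, all $|\lambda_k|$ equal a common value $\lambda$; rescaling $P^*$ by $1/\lambda$ makes each dual face coincide up to translation with the corresponding canonical dual, whence $\mathrm{Area}(f_k^*)=\mathrm{Area}(f_k)$ for every $k$.

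I expect the converse to be the delicate part, for two reasons. First, it depends on the existence of a Christoffel dual for a Kœnigs net; I would lean on Proposition~\ref{p-Koenigs} and the cited theory rather than reconstruct the dual, but I must phrase the single-quadrilateral uniqueness carefully so that the scalars $\lambda_k$ are genuinely well defined. Second, there is an orientation subtlety: a priori the canonical dual edge coming from $f_k$ could point opposite to the one coming from $f_l$, so the relation between $\lambda_k$ and $\lambda_l$ carries an a priori undetermined sign. The point that rescues the argument is that face areas depend only on $\lambda_k^2$, so only the magnitudes $|\lambda_k|$ enter; condition (ii) controls precisely these magnitudes, and the signs—together with the automatic closing-up around the central vertex—are handled for free by the fact that $P^*$ is already a genuine net.
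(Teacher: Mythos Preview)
Your argument is correct. The forward implication (equal-area Christoffel dual $\Rightarrow$ equal opposite ratios) is essentially the paper's argument as well: apply Proposition~\ref{p-dual-with-equal-areas} to both faces on a shared edge and compare.

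For the converse, however, your route differs from the paper's. The paper does not invoke Proposition~\ref{p-Koenigs} or any global existence result for K\oe nigs nets. Instead it works constructively: for each of the four faces it takes \emph{the} equal-area dual quadrilateral (unique up to translation once an orientation convention is fixed, via central symmetry), and then checks directly that these four local duals glue along the shared edges if and only if the adjacent opposite ratios match---again by Proposition~\ref{p-dual-with-equal-areas}. This single ``fit if and only if'' observation handles both implications at once and keeps the proof self-contained. Your approach, by contrast, first secures \emph{some} Christoffel dual via the K\oe nigs property and then normalizes the scalars $\lambda_k$; this is perfectly valid but uses more machinery (Proposition~\ref{p-Koenigs} and the cited K\oe nigs theory), and you then have to talk your way through the sign ambiguity that the paper's orientation convention dispatches in one line. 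Both proofs ultimately rest on Proposition~\ref{p-dual-with-equal-areas}; the paper's is shorter and avoids the detour through existence.
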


\begin{proof}
    Each face has a dual quadrilateral of the same area \cite[Lemma~2.20]{bobenko-2008-ddg}. Performing a central symmetry, if necessary, we may assume that for any edge of the form $P_{ij}P_{i+1,j}$, the corresponding oriented side of the dual quadrilateral has the same direction, and for any edge of the form $P_{ij}P_{i,j+1}$, the corresponding oriented side has the opposite direction. See Figure~\ref{figure:dual-of-2-quads}. The resulting dual quadrilaterals are unique up to translation. By Proposition~\ref{p-dual-with-equal-areas}, their oriented sides fit to compose a whole Christoffel dual net if and only if the required opposite ratios are equal. 
\end{proof}

Propositions~\ref{ex-ii-geo} and~\ref{p-class-ii} show that class~(ii) indeed consists of deformable nets.


\subsubsection{Opposite ratios}
Let us discuss further properties of the opposite ratio. See Figure~\ref{figure:def-oppos-rat}.


\begin{proposition}
    \label{p-opposite-ratio-equivalent-def}
    If the rays $BC$ and $AD$ extending the sides of a convex quadrilateral $ABCD$
    meet at a point $S$ and the diagonals meet at $Q$, then the opposite ratio of $ABCD$ with respect to $AB$~is 
    \begin{equation}\label{eq-opposite-ratio-equivalent-def}
    \frac{\mathrm{Area}(AQB)}{\mathrm{Area}(CQD)}=\frac{AQ\cdot BQ}{CQ\cdot DQ}
    =\frac{AS\cdot BS}{CS\cdot DS}=\frac{\mathrm{Area}(ASB)}{\mathrm{Area}(CSD)}
    =\left(1-\frac{\mathrm{Area}(ABCD)}{\mathrm{Area}(ABS)}\right)^{-1}.
    \end{equation}
\end{proposition}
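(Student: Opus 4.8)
The plan is to prove the displayed chain as four separate equalities, three of them routine and one the crux. Throughout I abbreviate $[XYZ]:=\mathrm{Area}(XYZ)$, and I use the configuration forced by convexity: since the rays $BC$ and $AD$ meet at $S$, the vertex $C$ lies between $B$ and $S$, and the vertex $D$ lies between $A$ and $S$ (this is exactly the trapezoid-like picture of Figure~\ref{figure:def-oppos-rat}).

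First I would dispatch the two equalities of the form ``area $=\tfrac12\cdot\text{side}\cdot\text{side}\cdot\sin(\text{included angle})$''. For $\frac{[AQB]}{[CQD]}=\frac{AQ\cdot BQ}{CQ\cdot DQ}$, the point $Q$ lies on both diagonals, so $A,Q,C$ and $B,Q,D$ are collinear; hence $\angle AQB$ and $\angle CQD$ are vertical angles with equal sine, and the area formula gives the ratio at once. For $\frac{AS\cdot BS}{CS\cdot DS}=\frac{[ASB]}{[CSD]}$, the collinearities $S,D,A$ and $S,C,B$ make the rays $SA,SD$ coincide and the rays $SB,SC$ coincide, so $\angle ASB=\angle CSD$ is literally the same angle, and the area formula again finishes it. The last equality is a dissection: cutting the triangle $ABS$ along the segment $CD$ (with $D\in SA$ and $C\in SB$) splits it into the quadrilateral $ABCD$ and the triangle $CDS$, so $[ABS]=[ABCD]+[CDS]$; substituting $[CSD]=[ABS]-[ABCD]$ into $\frac{[ASB]}{[CSD]}$ yields $\left(1-\frac{[ABCD]}{[ABS]}\right)^{-1}$.

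The remaining, and main, step is the middle equality $\frac{AQ\cdot BQ}{CQ\cdot DQ}=\frac{AS\cdot BS}{CS\cdot DS}$, which ties the diagonal point $Q$ to the side-intersection point $S$. Here I would simply compute. Put $S$ at the origin and pick unit vectors $\vec u,\vec v$ along the rays $SA$ and $SB$, so $A=a\vec u$, $D=d\vec u$, $B=b\vec v$, $C=c\vec v$ with $0<d<a$ and $0<c<b$; then $\frac{SA\cdot SB}{SC\cdot SD}=\frac{ab}{cd}$ immediately. Writing $Q=A+s(C-A)=B+r(D-B)$ and equating the $\vec u$- and $\vec v$-components (independent since $\vec u,\vec v$ are not parallel) gives a $2\times 2$ linear system for $s,r$; solving it yields $\frac{AQ}{CQ}=\frac{s}{1-s}=\frac{b(a-d)}{d(b-c)}$ and $\frac{BQ}{DQ}=\frac{r}{1-r}=\frac{a(b-c)}{c(a-d)}$, where $0<s,r<1$ follows from $d<a$ and $c<b$ (so these are genuine length ratios of the named segments). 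Multiplying, the factors $(a-d)$ and $(b-c)$ cancel and leave $\frac{AQ\cdot BQ}{CQ\cdot DQ}=\frac{ab}{cd}$, matching the $S$-side.

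I expect the only real obstacle to be this middle equality: the two routine pairs each reduce to a single angle identity and the decomposition is immediate, but relating $Q$ to $S$ forces one to locate $Q$ and then observe the telescoping cancellation. A coordinate-free alternative I would keep in reserve is to replace $\frac{AQ}{QC}$ and $\frac{BQ}{QD}$ by area ratios of triangles sharing a diagonal, namely $\frac{AQ}{QC}=\frac{[ABD]}{[CBD]}$ and $\frac{BQ}{QD}=\frac{[ABC]}{[ACD]}$; evaluating these four areas in the same $\vec u,\vec v$ frame produces exactly the same cancellation, in case that phrasing reads more cleanly than the explicit intersection computation.
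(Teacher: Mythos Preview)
Your argument is correct. The three ``routine'' equalities are handled exactly as the paper intends (it simply calls them straightforward), and your dissection $[ABS]=[ABCD]+[CDS]$ is the right one.

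For the crux equality $\frac{AQ\cdot BQ}{CQ\cdot DQ}=\frac{AS\cdot BS}{CS\cdot DS}$ you take a genuinely different route from the paper. The paper applies Menelaus' theorem twice, to triangle $ADQ$ with transversal $S,B,C$ and to triangle $BCQ$ with transversal $S,A,D$, obtaining
\[
\frac{AS}{DS}=\frac{BQ}{BD}\cdot\frac{CA}{CQ}
\qquad\text{and}\qquad
\frac{BS}{CS}=\frac{AQ}{AC}\cdot\frac{DB}{DQ},
\]
and then multiplies; the factors $CA$ and $BD$ cancel, giving the result in one line with no coordinates. Your approach instead places $S$ at the origin, solves a $2\times 2$ system for the intersection $Q$, and watches $(a-d)$ and $(b-c)$ cancel in the product. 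Both are elementary; the Menelaus version is shorter and coordinate-free, while your computation has the minor bonus of producing the individual ratios $\frac{AQ}{CQ}$ and $\frac{BQ}{DQ}$ explicitly (which the paper records separately in equation~\eqref{eq-ratio-from-opposite-ratio}). Your reserve argument via $\frac{AQ}{QC}=\frac{[ABD]}{[CBD]}$ would also work and is closer in spirit to the paper's synthetic style.
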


\begin{proof} 
    The second equality follows from Menelaus' theorem for the triangles $ADQ$ and $BCQ$: 
    $$\frac{AS}{DS}=\frac{BQ}{BD}\cdot\frac{CA}{CQ}\quad\text{and}\quad\frac{BS}{CS}=\frac{AQ}{AC}\cdot\frac{DB}{DQ}.$$
    The other three equalities are straightforward. 
    %
\end{proof}

Thus, in particular, 
rays $BC$ and $AD$ intersect if and only if this opposite ratio is greater than~$1$.

Further, just like~\eqref{eq-opposite-ratio-equivalent-def} expresses the opposite ratio in terms of certain length ratios, those length ratios themselves can be expressed in terms of opposite ratios:
\begin{equation}\label{eq-ratio-from-opposite-ratio}
\frac{AQ}{CQ}=\sqrt{\frac{\mathrm{Area}(AQB)}{\mathrm{Area}(CQD)}\cdot\frac{\mathrm{Area}(DQA)}{\mathrm{Area}(BQC)}}\quad\text{and}\quad
\frac{BQ}{DQ}=\sqrt{\frac{\mathrm{Area}(AQB)}{\mathrm{Area}(CQD)}\cdot\frac{\mathrm{Area}(BQC)}{\mathrm{Area}(DQA)}}.
\end{equation}
This shows that two opposite ratios of quadrilateral with respect to two adjacent sides determine the quadrilateral uniquely up to affine transformations: given vertices $A, B, C$, the first equation uniquely determines $Q$, and the second one uniquely determines~$D$. We arrive at the following corollary.  

\begin{corollary} \label{cor-existence-uniqueness-opposite-ratios}
There exists a unique convex quadrilateral $ABCD$ with given (non-collinear) vertices $A,B,C$ and given (positive) opposite ratios with respect to the sides $AB$ and $BC$.
\end{corollary}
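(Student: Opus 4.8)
The plan is to read the location of the diagonal intersection $Q$ and of the unknown vertex $D$ directly off the two prescribed opposite ratios via the identities \eqref{eq-ratio-from-opposite-ratio}, and then to run this construction both forwards (for existence) and backwards (for uniqueness).

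First I would name the data. Write $r_{AB}$ and $r_{BC}$ for the prescribed opposite ratios of $ABCD$ with respect to $AB$ and to $BC$; by definition $r_{AB}=\mathrm{Area}(AQB)/\mathrm{Area}(CQD)$ and, cyclically, $r_{BC}=\mathrm{Area}(BQC)/\mathrm{Area}(DQA)$, where $Q$ is the intersection of the diagonals. Substituting these into \eqref{eq-ratio-from-opposite-ratio} collapses the four triangle areas into the two given numbers and yields
\begin{equation*}
\frac{AQ}{CQ}=\sqrt{\frac{r_{AB}}{r_{BC}}}\qquad\text{and}\qquad\frac{BQ}{DQ}=\sqrt{r_{AB}\,r_{BC}}.
\end{equation*}
Both right-hand sides are positive and finite: this is the crux, since the two opposite ratios pin down exactly these two length ratios at $Q$.

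For existence I would run the construction forward. As $A,B,C$ are non-collinear, place $Q$ on the segment $AC$ as the unique interior point with $AQ:QC=\sqrt{r_{AB}/r_{BC}}$; since $B\notin AC$, the point $Q$ differs from $B$, so the ray from $B$ through $Q$ is defined, and I put $D$ on it beyond $Q$ with $QD=BQ/\sqrt{r_{AB}r_{BC}}$. Then $Q$ is an interior point of both $AC$ and $BD$, so these segments properly cross, whence $ABCD$ is a convex quadrilateral with diagonal intersection $Q$. Because the four triangles $AQB$, $BQC$, $CQD$, $DQA$ share the vertex $Q$ with pairwise equal or supplementary angles there, their areas are proportional to the products of the adjacent segment lengths, so the opposite ratios of the constructed quadrilateral are $\frac{AQ\cdot BQ}{CQ\cdot DQ}=r_{AB}$ and $\frac{BQ\cdot CQ}{DQ\cdot AQ}=r_{BC}$, as required.

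Uniqueness is the same computation read backwards: any convex quadrilateral with the given vertices and opposite ratios must, by \eqref{eq-ratio-from-opposite-ratio}, have its diagonal intersection divide $AC$ in the ratio $\sqrt{r_{AB}/r_{BC}}$ (which fixes $Q$) and then $D$ on the ray $BQ$ beyond $Q$ with $QD=BQ/\sqrt{r_{AB}r_{BC}}$ (which fixes $D$). I expect the main obstacle to be the bookkeeping that legitimizes the construction rather than any algebra: verifying that $Q$ lands strictly inside $AC$ and strictly between $B$ and $D$, so that the diagonals genuinely cross and the vertex order $A,B,C,D$ is that of a convex quadrilateral, and tracking the unsigned areas so that the positive values $r_{AB},r_{BC}$ are matched with the correct orientation. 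These are precisely the places where a careless sign or a degenerate placement of $Q$ could destroy convexity.
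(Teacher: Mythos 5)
Your proposal is correct and takes essentially the same route as the paper: the paper deduces the corollary directly from \eqref{eq-ratio-from-opposite-ratio}, observing that the first equation uniquely determines $Q$ on the diagonal $AC$ and the second then uniquely determines $D$ on the ray $BQ$. Your write-up only adds the verification (left implicit in the paper) that the constructed quadrilateral is genuinely convex and realizes the prescribed opposite ratios, which is a sound and worthwhile completion of the same argument.
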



This suggests the following construction of deformable $2\times 2$ nets. A pair of opposite faces $P_{00}P_{01}P_{11}P_{10}$ and $P_{11}P_{12}P_{22}P_{21}$ can be prescribed arbitrarily as long as 
the triples $P_{01},P_{11},P_{12}$ and $P_{10},P_{11},P_{21}$ are non-collinear. 
Then the remaining pair of faces is uniquely determined by condition~(ii) of Theorem~\ref{th-classification}, leading to a unique $2\times 2$ net from class (ii). 
Analogously, the pair of opposite faces leads to at most two $2\times 2$ nets from class~(i), depending on which pairs of faces are affine symmetric.

\subsection{Proof of the classification}
\label{ssec-proof-deformable-2x2}

First, we reduce the classification of deformable $2\times 2$ nets to solving a system of quadratic equations. We need the following notation.
See Figure~\ref{figure:notation} (left).

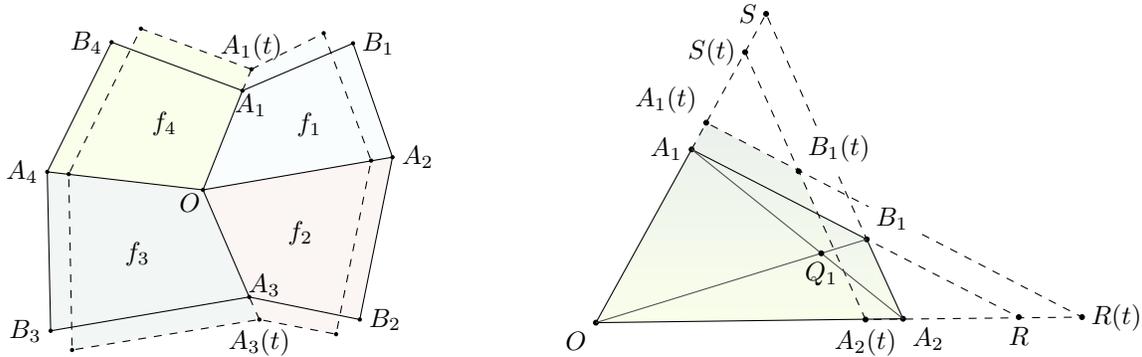
\begin{figure}[htbp]
  \centering
\begin{tikzpicture}[scale=0.15]

\coordinate (A1) at (11.7, -16);
\coordinate (O) at (8.2, -24.8);
\coordinate (A2) at (25, -21.9);
\coordinate (B2) at (22.1, -36.3);
\coordinate (A3) at (12.3, -34.3);
\coordinate (B3) at (-5.3, -37.3);
\coordinate (A4) at (-5.6, -23.2);
\coordinate (B4) at (0.1, -11.7);
\coordinate (B1) at (21.5, -11.8);
\coordinate (A1t) at (12.5, -14.1);
\coordinate (B1t) at (18.9, -10.9);
\coordinate (A2t) at (23.1, -22.2);
\coordinate (B2t) at (20, -37.6);
\coordinate (A3t) at (13.2, -36.3);
\coordinate (B3t) at (-3.4, -39);
\coordinate (A4t) at (-3.7, -23.4);
\coordinate (B4t) at (2.7, -10.5);

\fill[babyblue!5] (O) -- (A1) -- (B1) -- (A2) -- cycle;
\fill[babyblue!5] (O) -- (A1t) -- (B1t) -- (A2t) -- cycle;
\fill[antiquebrass!8] (O) -- (A2) -- (B2) -- (A3) -- cycle;
\fill[antiquebrass!8] (O) -- (A2t) -- (B2t) -- (A3t) -- cycle;
\fill[britishracinggreen!5] (O) -- (A3) -- (B3) -- (A4) -- cycle;
\fill[britishracinggreen!5] (O) -- (A3t) -- (B3t) -- (A4t) -- cycle;
\fill[lime!8] (O) -- (A4) -- (B4) -- (A1) -- cycle;
\fill[lime!8] (O) -- (A4t) -- (B4t) -- (A1t) -- cycle;

\draw[black, thin] (A1) -- (O) -- (A2) -- (B2) -- (A3) -- (B3) -- (A4) -- (B4) -- (A1) -- (B1) -- (A2);
\draw[black, thin] (A4) -- (O) -- (A3);
\draw[black, thin, dashed] (A1t) -- (B1t) -- (A2t) -- (B2t) -- (A3t) -- (B3t) -- (A4t) -- (B4t) -- (A1t);
\draw[black, thin, dashed] (A1t) -- (A1);
\draw[black, thin, dashed] (A3t) -- (A3);

\filldraw[black] (O) circle (4pt);
\filldraw[black] (8.8, -24.3) circle (0pt) node[anchor=north east]{$O$};

\filldraw[black] (25, -21.9) circle (4pt) node[anchor=west]{$A_2$};
\filldraw[black] (12.3, -34.3) circle (4pt);
\filldraw[black] (11.4, -35.1) circle (0pt) node[anchor=south west]{$A_3$};
\filldraw[black] (-5.6, -23.2) circle (4pt) node[anchor=east]{$A_4$};
\filldraw[black] (11.7, -16) circle (4pt); 
\filldraw[black] (10.2, -15.5) circle (0pt) node[anchor=north west]{$A_1$};

\filldraw[black] (21.5, -11.8) circle (4pt) node[anchor=west]{$B_1$};
\filldraw[black] (22.1, -36.3) circle (4pt) node[anchor=west]{$B_2$};
\filldraw[black] (-5.3, -37.3) circle (4pt) node[anchor=east]{$B_3$};
\filldraw[black] (0.1, -11.7) circle (4pt) node[anchor=east]{$B_4$};

\filldraw[black] (23.1, -22.2) circle (4pt); 
\filldraw[black] (-3.7, -23.4) circle (4pt); 
\filldraw[black] (12.5, -14.1) circle (4pt) node[anchor=south]{$A_1(t)$};
\filldraw[black] (13.2, -36.3) circle (4pt) node[anchor=north]{$A_3(t)$};

\filldraw[black] (18.9, -10.9) circle (4pt); 
\filldraw[black] (20, -37.6) circle (4pt); 
\filldraw[black] (-3.4, -39) circle (4pt); 
\filldraw[black] (2.7, -10.5) circle (4pt); 

\filldraw[black] (17.63456,-18.97232) node[anchor=center]{$f_1$};
\filldraw[black] (18.74035,-28.64956) node[anchor=east]{$f_2$};
\filldraw[black] (2.40359,-30.5) node[anchor=center]{$f_3$};
\filldraw[black] (2.84118,-18.94961) node[anchor=west]{$f_4$};

\end{tikzpicture}
\qquad\qquad
\begin{tikzpicture}[scale=1.3]

\coordinate (Q) at (-3.82, -0.74);
\coordinate (A1) at (-2.8416, 1.03277);
\coordinate (A1t) at (-2.69238, 1.30634);
\coordinate (S) at (-2.08, 2.42);
\coordinate (St) at (-2.29447, 2.02757);
\coordinate (A2t) at (-1.06341, -0.70812);
\coordinate (A2) at (-0.68, -0.7);
\coordinate (R) at (0.50339, -0.68325);
\coordinate (Rt) at (1.15, -0.68325);
\coordinate (B1t) at (-1.74733, 0.80894);
\coordinate (B1) at (-1.05098, 0.11259);

\fill[bluedefrance!8] (Q) -- (A1) -- (B1) -- (A2) -- cycle;
\fill[lime!8,path fading=west] (Q) -- (A1) -- (B1) -- (A2) -- cycle;
\fill[britishracinggreen!8,path fading=south] (Q) -- (A1) -- (B1) -- (A2) -- cycle;

\fill[lime!8] (Q) -- (A1t) -- (B1t) -- (A2t) -- cycle;
\fill[lime!8,path fading=west] (Q) -- (A1t) -- (B1t) -- (A2t) -- cycle;
\fill[britishracinggreen!8,path fading=south] (Q) -- (A1t) -- (B1t) -- (A2t) -- cycle;

\draw[black, thin] (Q) -- (A1) -- (B1) -- (A2) -- (Q);
\draw[black, thin, dashed] (A1) -- (S) -- (B1) -- (R);
\draw[black, thin, dashed] (St) -- (A2t) -- (Rt) -- (A1t);

\filldraw[black] (A2) circle (0.7pt) node[anchor=north west]{$A_2$};
\filldraw[black] (S) circle (0.7pt) node[left]{$S$};
\filldraw[black] (Q) circle (0.7pt) node[anchor=north east]{$O$};
\filldraw[black] (Rt) circle (0.7pt) node[anchor=west]{$R(t)$};
\filldraw[black] (A1t) circle (0.7pt) node[anchor=south east]{$A_1(t)$};
\filldraw[black] (B1) circle (0.7pt) node[anchor=south west, fill=white, rounded corners=5pt]{$B_1$};
\filldraw[black] (R) circle (0.7pt) node[anchor=north]{$R$};
\filldraw[black] (A1) circle (0.7pt) node[anchor=east]{$A_1$};
\filldraw[black] (St) circle (0.7pt) node[anchor=east]{$S(t)$};
\filldraw[black] (A2t) circle (0.7pt) node[anchor=north]{$A_2(t)$};
\filldraw[black] (B1t) circle (0.7pt) node[anchor=south west, fill=white, rounded corners=5pt]{$B_1(t)$};


\draw[black!70, thin] (A1) -- (A2);
\draw[black!70, thin] (Q) -- (B1);

\path[name path=line 1] (A1) -- (A2);
\path[name path=line 2] (Q) -- (B1);
\fill[name intersections={of=line 1 and line 2, by=Q1}];

\filldraw[black] (Q1) circle (0.7pt) node[anchor=north]{$Q_1$};
\end{tikzpicture}
\caption{Notation used throughout Subsection~\ref{ssec-proof-deformable-2x2} (left) and the proofs of Lemmas~\ref{l-system}--\ref{remark-lemma} (right)}
\label{figure:notation}
\end{figure}

The faces of a $2\times 2$ net are denoted by $f_1,f_2,f_3,f_4$ so that $f_i$ and $f_{i-1}$ have a common edge for $i=1,2,3,4$, and the indices are cyclic modulo $4$. 
The common vertex of all the faces is denoted by $O$, and the common edge of $f_i$ and $f_{i-1}$ is denoted by $OA_i$. The vertex of $f_i$ other than $O,A_i,A_{i+1}$ is denoted by $B_i$. 

The \emph{simple ratio of a quadrilateral $ABCD$ with respect to the oriented side} $AB$ is (see Figure~\ref{figure:def-oppos-rat})
$$
\begin{cases}
    AB/RA ,&\text{if the rays $BA$ and $CD$ intersect at a point $R$};\\
    0,     &\text{if }AB\parallel CD;\\
    -AB/RA ,&\text{if the rays $AB$ and $DC$ intersect at a point $R$.}
\end{cases}
$$
The simple ratios of the face $f_i$ with respect to $OA_i$ and $OA_{i+1}$ 
are denoted by $l_i$ and $m_i$ respectively.

For two collinear vectors $\overrightarrow{AB}$ and $\overrightarrow{CD}$, denote by $\overrightarrow{AB}/\overrightarrow{CD}$ the number $k$ such that $\overrightarrow{AB}=k\cdot\overrightarrow{CD}$.

\begin{lemma} \label{l-system}
A $2\times 2$ net is deformable if and only if the system of equations
\begin{equation}\label{neweq-main-system}
P_i(x_i,x_{i+1}):=l_ix_i^2+2x_ix_{i+1}+m_ix_{i+1}^2-(l_i+m_i+2)=0 \qquad\text{for }i=1,2,3,4
\end{equation}
has a non-constant continuous family of real solutions 
$\big(x_1(t),\dots,x_4(t)\big)$
with $x_1(0)=\dots=x_4(0)=1$. 
\end{lemma}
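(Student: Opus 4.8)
The plan is to set up a correspondence between germs of deformations of the net and germs of real solutions of the system, and then to read off the area-preservation condition as the equation $P_i=0$. First I would normalize: since all nets in a parallel family have edges parallel to those of the initial net, I may post-compose each member with a translation so that the common vertex $O$ is fixed, which changes nothing up to congruence. Then in any parallel net the point $A_i$ stays on the fixed line $OA_i$, so I can write $\overrightarrow{OA_i(t)}=x_i(t)\,\overrightarrow{OA_i}$ with $x_i(0)=1$. Conversely, a tuple $(x_1,x_2,x_3,x_4)$ near $(1,1,1,1)$ determines each far vertex $B_i$ uniquely, as the intersection of the line through $A_i(t)$ parallel to $A_iB_i$ with the line through $A_{i+1}(t)$ parallel to $A_{i+1}B_i$ (transversal for a convex face). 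Thus parallel nets fixing $O$ correspond exactly to such tuples, continuously, the constant tuple giving the initial net. I would also note that distinct tuples give non-congruent nets: an isometry carrying one such net to another preserves every edge direction, hence is a translation, and fixing $O$ it is the identity; so a non-constant family of tuples is the same thing as a non-congruent family of parallel nets.

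Next comes the core computation identifying the area constraint with $P_i$. Writing $\mathbf{u}=\overrightarrow{OA_i}$, $\mathbf{v}=\overrightarrow{OA_{i+1}}$, $\mathbf{p}=\overrightarrow{A_iB_i}$, $\mathbf{q}=\overrightarrow{A_{i+1}B_i}$ and solving the closure relation $p'\mathbf{p}-q'\mathbf{q}=x_{i+1}\mathbf{v}-x_i\mathbf{u}$ for $B_i(t)$, the signed area of $f_i(t)$, computed by the shoelace formula with the planar cross product $[\cdot,\cdot]$, is a quadratic form in $(x_i,x_{i+1})$. Using the Grassmann--Pl\"ucker relation $[\mathbf{u},\mathbf{v}][\mathbf{p},\mathbf{q}]=[\mathbf{u},\mathbf{p}][\mathbf{v},\mathbf{q}]-[\mathbf{u},\mathbf{q}][\mathbf{v},\mathbf{p}]$ to simplify the cross-term, this form becomes, up to the nonzero factor $\tfrac{[\mathbf{u},\mathbf{p}][\mathbf{v},\mathbf{q}]}{2[\mathbf{p},\mathbf{q}]}$,
$$ -\frac{[\mathbf{u},\mathbf{q}]}{[\mathbf{v},\mathbf{q}]}\,x_i^2+2\,x_ix_{i+1}-\frac{[\mathbf{v},\mathbf{p}]}{[\mathbf{u},\mathbf{p}]}\,x_{i+1}^2. $$
It then remains to recognize the two coefficients as the simple ratios $l_i$ and $m_i$: intersecting the line $OA_i$ with the line $A_{i+1}B_i$ locates the apex $S_i$ of the simple-ratio definition, and a short computation gives $S_i=\big([\mathbf{v},\mathbf{q}]/[\mathbf{u},\mathbf{q}]\big)\mathbf{u}$, whence $l_i=-[\mathbf{u},\mathbf{q}]/[\mathbf{v},\mathbf{q}]$, and symmetrically $m_i=-[\mathbf{v},\mathbf{p}]/[\mathbf{u},\mathbf{p}]$. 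Hence $\mathrm{Area}(f_i(t))$ is a fixed nonzero multiple of $l_ix_i^2+2x_ix_{i+1}+m_ix_{i+1}^2$, and the condition $\mathrm{Area}(f_i(t))=\mathrm{Area}(f_i)$, whose right-hand side is this multiple times $l_i+m_i+2$, is precisely $P_i(x_i,x_{i+1})=0$.

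Finally I would assemble the two halves. A deformation is a non-constant continuous family of tuples $(x_1(t),\dots,x_4(t))$ through $(1,\dots,1)$ for which every face keeps its area, and by the previous paragraph these four area conditions are exactly the four equations $P_i=0$; for the converse, I note that for tuples sufficiently close to $(1,\dots,1)$ the constructed quadrilaterals stay convex, so the tuples genuinely define admissible nets, which yields the "if" direction and completes the equivalence. The main obstacle is the second paragraph, specifically the bookkeeping of orientations and signs needed so that the three-case definition of the simple ratio matches the cross-product expressions exactly (including the correct branch of which pair of rays actually meets), together with checking that $[\mathbf{u},\mathbf{p}]$, $[\mathbf{v},\mathbf{q}]$, and $[\mathbf{p},\mathbf{q}]$ are nonzero for a genuine convex face so that all the divisions are legitimate.
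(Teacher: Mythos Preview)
Your argument is correct and essentially complete. The sign bookkeeping you flag as the main obstacle is in fact harmless once one notes, as the paper does, that the signed identity $\overrightarrow{OA_i}/\overrightarrow{OS_i}=-l_i$ holds uniformly across all three cases of the simple-ratio definition (including the parallel case $l_i=0$, which matches $[\mathbf{u},\mathbf{q}]=0$); with that, $l_i=-[\mathbf{u},\mathbf{q}]/[\mathbf{v},\mathbf{q}]$ and $m_i=-[\mathbf{v},\mathbf{p}]/[\mathbf{u},\mathbf{p}]$ hold as signed formulas, and convexity guarantees $[\mathbf{p},\mathbf{q}],[\mathbf{u},\mathbf{p}],[\mathbf{v},\mathbf{q}]\neq 0$ exactly as you say.

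Your route differs from the paper's. The paper first proves the area ratio formula
\[
\frac{\mathrm{Area}(OA_1B_1A_2)}{\mathrm{Area}(OA_1A_2)}=\frac{l_1+m_1+2}{1-l_1m_1}
\]
via Menelaus' theorem, then observes that under a parallel deformation the simple ratios scale as $l_1(t)=l_1x_1/x_2$, $m_1(t)=m_1x_2/x_1$ and applies the formula twice; the trapezoidal case is handled separately by a limiting argument. You instead compute the area of the deformed face directly from the shoelace formula, solve the closure relation linearly for $p',q'$, and collapse the cross term with the Grassmann--Pl\"ucker identity to get the quadratic form in $(x_i,x_{i+1})$ in one stroke. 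Your approach is more algebraic and treats the parallel-side case uniformly without a limit; the paper's approach is more synthetic and yields the auxiliary formula above, which it does not use elsewhere in an essential way. Both are short and valid.
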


\begin{remark}
    Each such family determines a family of parallel nets with $OA_i(t)/OA_i=x_i(t)$.
\end{remark}

\begin{proof}[Proof] 
 Let us prove the following expression for the ratio of oriented areas:
\begin{equation}\label{eq-area-ratio}
    \frac{\mathrm{Area}(OA_1B_1A_2)}{\mathrm{Area}(OA_1A_2)}=\frac{l_1+m_1+2}{1-l_1m_1}.
\end{equation}
Indeed, first assume that the lines $OA_1,OA_2$ intersect with $A_2B_1,A_1B_1$ at some points $S,R$, respectively. See Figure~\ref{figure:notation}(right). Then ${\overrightarrow{OA_2}}/{\overrightarrow{OR}}=-m_1$ and ${\overrightarrow{OA_1}}/{\overrightarrow{OS}}=-l_1$. By the Menelaus theorem  for points $A_2,B_1,S$ on the extensions of the sides of the triangle $OA_1R$, we get
$$\frac{\overrightarrow{A_1R}}{\overrightarrow{B_1R}}=1+\frac{\overrightarrow{A_1B_1}}{\overrightarrow{B_1R}}=1+\frac{\overrightarrow{SA_1}}{\overrightarrow{OS}}\cdot\frac{\overrightarrow{OA_2}}{\overrightarrow{RA_2}}=1-(1+l_1)\cdot\frac{m_1}{1+m_1}=\frac{1-l_1m_1}{1+m_1}.$$
Then 
\begin{multline*}
\frac{\mathrm{Area}(OA_1B_1A_2)}{\mathrm{Area}(OA_1A_2)}=\frac{\mathrm{Area}(OA_1R)}{\mathrm{Area}(OA_1A_2)}-\frac{\mathrm{Area}(A_2B_1R)}{\mathrm{Area}(OA_1A_2)}
=\frac{\overrightarrow{OR}}{\overrightarrow{OA_2}}-\frac{\overrightarrow{A_2R}}{\overrightarrow{OA_2}}\cdot\frac{\overrightarrow{B_1R}}{\overrightarrow{A_1R}}
=\\=-\frac{1}{m_1}-\frac{1+m_1}{-m_1}\cdot\frac{1+m_1}{1-l_1m_1}
=\frac{l_1+m_1+2}{1-l_1m_1}.
\end{multline*}
This proves~\eqref{eq-area-ratio} unless $OA_1B_1A_2$ has a pair of parallel sides. In the latter case, \eqref{eq-area-ratio} is established, e.g., by a limiting argument.

Now assume that the net is deformable. Without loss of generality, assume that $O$ is fixed during the deformation. In a deformation, take the face  $OA_1(t)B_1(t)A_2(t)$ with the same area and the same
side directions as $OA_1B_1A_2$. Denote $x_1(t)={OA_1(t)}/{OA_1}$ and $x_2(t)={OA_2(t)}/{OA_2}$. Then the simple ratios of the face $OA_1(t)B_1(t)A_2(t)$ with respect to the sides $OA_1(t)$ and $OA_2(t)$ are 
$l_1(t)=l_1x_1(t)/x_2(t)$ and $m_1(t)=m_1x_2(t)/x_1(t)$ respectively. Applying~\eqref{eq-area-ratio} two times, we get 
\begin{multline*}
\frac{l_1+m_1+2}{1-l_1m_1}=\frac{\mathrm{Area}(OA_1B_1A_2)}{\mathrm{Area}(OA_1A_2)}=
\frac{\mathrm{Area}(OA_1(t)B_1(t)A_2(t))}{\mathrm{Area}(OA_1(t)A_2(t))}\cdot\frac{\mathrm{Area}(OA_1(t)A_2(t))}{\mathrm{Area}(OA_1A_2)}=\\=\frac{l_1(t)+m_1(t)+2}{1-l_1(t)m_1(t)}x_1(t)x_2(t)
=\frac{l_1x_1(t)^2+2x_1(t)x_2(t)+m_1x_2(t)^2}{1-l_1m_1}.
\end{multline*}
We have arrived at $P_1(x_1(t),x_2(t))=0$. Analogously we get~\eqref{neweq-main-system} for $i=2,3,4$. 

Conversely, given a family of solutions of~\eqref{neweq-main-system}, we construct a family of parallel nets with the same areas of faces: the point $O$ is fixed; the points $A_i(t)$ are determined by ${OA_i(t)}/{OA_i}=x_i(t)$.
\end{proof}


Conditions (i)--(ii) in Theorem~\ref{th-classification} are restated in terms of the simple ratios $l_i$ and $m_i$ as follows.

\begin{lemma}\label{remark-lemma} 
For each $i=1,2,3,4$, we have $l_i+1,m_i+1,1-l_im_i>0$.
%
Two faces $f_i$ and $f_{i+1}$: 
\begin{enumerate}
    \item[\textup{(i)}] are affine symmetric with respect to their common edge if and only if $l_i=m_{i+1}$ and $l_{i+1}=m_i$;
    \item[\textup{(ii)}] have equal opposite-ratios with respect to their common edge if and only if $\frac{1-m_il_i}{(1+l_i)^2}=\frac{1-m_{i+1}l_{i+1}}{(1+m_{i+1})^2}$.
\end{enumerate}
\end{lemma}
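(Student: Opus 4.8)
The plan is to translate all three assertions into statements about the simple ratios $l_i,m_i$ and the fourth vertex $B_i$, using that the defining points satisfy $\overrightarrow{OS_i}=-\tfrac1{l_i}\overrightarrow{OA_i}$ and $\overrightarrow{OR_i}=-\tfrac1{m_i}\overrightarrow{OA_{i+1}}$, where $S_i=$ (line $OA_i$)$\cap$(line $A_{i+1}B_i$) and $R_i=$ (line $OA_{i+1}$)$\cap$(line $A_iB_i$).

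For the inequalities I would argue from convexity alone. In any nondegenerate convex quadrilateral the lines carrying a pair of \emph{opposite} sides meet (if at all) strictly outside both closed side-segments, since an intersection at a vertex would force three collinear vertices. For $f_i$ the opposite sides $OA_i$ and $A_{i+1}B_i$ meet at $S_i$, so writing $S_i=O+t\,\overrightarrow{OA_i}$ with $t=-1/l_i$ gives $t<0$ or $t>1$ (or the parallel case $l_i=0$); in every case $l_i>-1$, i.e. $l_i+1>0$, and symmetrically $m_i+1>0$. Finally, \eqref{eq-area-ratio} expresses $\frac{l_i+m_i+2}{1-l_im_i}$ as the ratio of the equally oriented (hence same-sign) areas $\mathrm{Area}(OA_iB_iA_{i+1})$ and $\mathrm{Area}(OA_iA_{i+1})$, so it is positive; as its numerator $(l_i+1)+(m_i+1)$ is already positive, the denominator $1-l_im_i$ is positive too.

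For part~(i) the key point is that the simple ratio is an affine invariant: it is built from parallelism and from signed ratios of collinear segments, both preserved by affine maps. If $f_i$ and $f_{i+1}$ are affine symmetric across their common edge $OA_{i+1}$, the symmetry $\phi$ fixes $O,A_{i+1}$ and sends $A_i\mapsto A_{i+2}$ and $B_i\mapsto B_{i+1}$; comparing each simple ratio of $f_i$ with its image yields $m_i=l_{i+1}$ (the ratio across $OA_{i+1}$, which $\phi$ preserves) and $l_i=m_{i+1}$ (the ratio across $OA_i$, whose image is $OA_{i+2}$). Conversely, assuming $l_i=m_{i+1}$ and $m_i=l_{i+1}$, I would take $\phi$ to be the unique affine map fixing the line $OA_{i+1}$ pointwise with $\phi(A_i)=A_{i+2}$, and check $\phi(B_i)=B_{i+1}$. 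Both $\phi(f_i)$ and $f_{i+1}$ are quadrilaterals on the triangle $O,A_{i+1},A_{i+2}$ whose simple ratios across the two edges through $O$ agree (this is exactly what the two hypotheses give), and such a fourth vertex is unique — each simple ratio pins down one of the two lines through it — so the quadrilaterals, hence the faces, coincide.

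For part~(ii) the heart of the matter is the formula: the opposite ratio of $f_i$ with respect to $OA_{i+1}$ equals $\frac{1-l_im_i}{(1+l_i)^2}$, and (by the symmetry $A_i\leftrightarrow A_{i+1}$, $l_i\leftrightarrow m_i$) its opposite ratio with respect to $OA_i$ equals $\frac{1-l_im_i}{(1+m_i)^2}$. I would prove this by normalizing $O=(0,0)$, $A_i=(1,0)$, $A_{i+1}=(0,1)$; the two simple ratios then place the lines $A_{i+1}B_i$ and $A_iB_i$ and force $B_i=\frac{1}{1-l_im_i}\bigl(1+m_i,\;1+l_i\bigr)$, after which the opposite ratio — the ratio of the two triangles cut off by the diagonals — is a one-line determinant computation. (One could instead combine Proposition~\ref{p-opposite-ratio-equivalent-def} with \eqref{eq-area-ratio}, but the branch of that formula depends on whether the opposite ratio exceeds $1$, so the coordinate route is cleaner.) Applying the formula to $f_{i+1}$, whose common edge $OA_{i+1}$ now plays the role of the \emph{first} edge carrying simple ratio $l_{i+1}$, its opposite ratio across $OA_{i+1}$ is $\frac{1-l_{i+1}m_{i+1}}{(1+m_{i+1})^2}$; equating the two opposite ratios is precisely the stated identity. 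The only real obstacle throughout is the bookkeeping — tracking which edge carries which simple ratio and controlling the signs in the opposite-ratio formula — rather than any conceptual difficulty, and the coordinate normalization is what keeps the signs under control.
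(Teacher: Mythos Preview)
Your argument is correct, but it follows a different route from the paper's. The paper works synthetically through the diagonal intersection $Q_1$: using Menelaus' theorem it derives closed expressions
\[
l_1+1=\frac{\overrightarrow{A_1A_2}}{\overrightarrow{Q_1A_2}}\cdot\frac{\overrightarrow{B_1Q_1}}{\overrightarrow{B_1O}},\qquad
m_1+1=\frac{\overrightarrow{A_2A_1}}{\overrightarrow{Q_1A_1}}\cdot\frac{\overrightarrow{B_1Q_1}}{\overrightarrow{B_1O}},\qquad
1-l_1m_1=\frac{\overrightarrow{A_2A_1}}{\overrightarrow{Q_1A_1}}\cdot\frac{\overrightarrow{B_1Q_1}}{\overrightarrow{B_1O}}\cdot\frac{\overrightarrow{Q_1O}}{\overrightarrow{B_1O}}\cdot\frac{\overrightarrow{A_1A_2}}{\overrightarrow{Q_1A_2}},
\]
and then reads all three assertions off these formulas: positivity drops out of convexity, part~(i) becomes two cross-ratio identities~\eqref{eq-ratios} that say exactly that the affine map $OB_1A_2\mapsto OB_2A_2$ sends $A_1$ to $A_3$, and part~(ii) is a direct substitution giving $\tfrac{1-l_1m_1}{(1+l_1)^2}=\tfrac{\overrightarrow{Q_1A_2}}{\overrightarrow{Q_1A_1}}\cdot\tfrac{\overrightarrow{Q_1O}}{\overrightarrow{Q_1B_1}}$, which is visibly the opposite ratio.

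Your approach is more modular: you recycle \eqref{eq-area-ratio} for the last inequality instead of re-deriving it, you handle~(i) by the clean conceptual observation that simple ratios are affine invariants together with a uniqueness argument for the fourth vertex, and you handle~(ii) by an affine coordinate normalization that makes $B_i=\tfrac{1}{1-l_im_i}(1+m_i,1+l_i)$ explicit. The paper's single set of $Q_1$-formulas is more unified and connects directly to the opposite-ratio definition; your route avoids Menelaus entirely and is arguably more transparent in~(i), at the cost of introducing coordinates in~(ii). One small point of care in your~(i): in $\mathbb{R}^d$ the affine map fixing $OA_{i+1}$ pointwise with $\phi(A_i)=A_{i+2}$ is unique only as a map between the planes of $f_i$ and $f_{i+1}$, not as a self-map of $\mathbb{R}^d$; this is harmless for the definition of affine symmetry but worth saying.
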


The latter two fractions are the expressions for the opposite ratios in terms of simple ratios.

\begin{proof}[Proof]
Assume without loss of generality that $i=1$. 

First, we derive a useful expression for $l_1+1,m_1+1$, and $1-l_1m_1$. Let $Q_1$ be the intersection point of the diagonals of $OA_1B_1A_{2}$ and let $S=OA_1\cap A_2B_1$ (if the latter lines are not parallel). See Figure~\ref{figure:notation}(right). 
Using Menelaus' theorem for points $A_2,B_1,S$ and the triangle $A_1Q_1O$, we get 
$$l_1+1=\frac{\overrightarrow{A_1O}}{\overrightarrow{OS}}+1=\frac{\overrightarrow{A_1S}}{\overrightarrow{OS}}=\frac{\overrightarrow{A_1A_2}}{\overrightarrow{Q_1A_2}}\cdot\frac{\overrightarrow{B_1Q_1}}{\overrightarrow{B_1O}}.$$
Notice that the resulting expression remains true, even if 
$OA_1\parallel A_2B_1$. Similarly, we have
$$m_1+1=\frac{\overrightarrow{A_2A_1}}{\overrightarrow{Q_1A_1}}\cdot\frac{\overrightarrow{B_1Q_1}}{\overrightarrow{B_1O}}.$$
Substituting $l_1+1,m_1+1$ into $1-l_1m_1=(m_1+1)[1-(l_1+1)+(l_1+1)/(m_1+1)]$, we get
$$1-l_1m_1=\frac{\overrightarrow{A_2A_1}}{\overrightarrow{Q_1A_1}}\cdot\frac{\overrightarrow{B_1Q_1}}{\overrightarrow{B_1O}}\cdot\left(1-\frac{\overrightarrow{A_1A_2}}{\overrightarrow{Q_1A_2}}\cdot\frac{\overrightarrow{B_1Q_1}}{\overrightarrow{B_1O}}-\frac{\overrightarrow{Q_1A_1}}{\overrightarrow{Q_1A_2}}\right)=\frac{\overrightarrow{A_2A_1}}{\overrightarrow{Q_1A_1}}\cdot\frac{\overrightarrow{B_1Q_1}}{\overrightarrow{B_1O}}\cdot\frac{\overrightarrow{Q_1O}}{\overrightarrow{B_1O}}\cdot \frac{\overrightarrow{A_1A_2}}{\overrightarrow{Q_1A_2}}.
$$
In particular, $m_1+1, l_1+1, 1-l_1m_1>0$ by the convexity.
\hiddencomment{To do later: This should be updated once we fix the setup (convex vs non-convex faces).  Those values are all positive for a convex quadrilateral, and the signs need not be the same for a quadrilateral with one of the angles $A_1, A_2, Q$ non-convex.}

We have analogous expressions for $m_2+1$, $l_2+1$, $1-l_2m_2$ in terms of 
the point $Q_2=OB_2\cap A_2A_3$. 

\hiddencomment{To do later: This should be updated once we fix the setup (convex vs non-convex faces).  Those values are all positive for a convex quadrilateral, and the signs need not be the same for a quadrilateral with one of the angles $A_1, A_2, Q$ non-convex.}

(i) 
%
From those expressions, we get
$$\frac{m_1+1}{l_1+1}=\frac{\overrightarrow{A_2Q_1}}{\overrightarrow{Q_1A_1}},\quad
\frac{l_2+1}{m_2+1}=\frac{\overrightarrow{A_2Q_2}}{\overrightarrow{Q_2A_3}},\quad
\frac{1}{m_1+1}+\frac{1}{l_1+1}
=\frac{\overrightarrow{B_1O}}{\overrightarrow{B_1Q_1}}, \quad
\frac{1}{m_2+1}+\frac{1}{l_2+1}
=\frac{\overrightarrow{B_2O}}{\overrightarrow{B_2Q_2}}.
$$
Then $(l_1,m_1)=(m_2,l_2)$ is equivalent to 
\begin{equation}\label{eq-ratios}
\frac{\overrightarrow{A_2Q_1}}{\overrightarrow{Q_1A_1}} =\frac{\overrightarrow{A_2Q_2}}{\overrightarrow{Q_2A_3}}
\quad\text{and}\quad 
\frac{\overrightarrow{B_1O}}{\overrightarrow{B_1Q_1}} 
=\frac{\overrightarrow{B_2O}}{\overrightarrow{B_2Q_2}}.
\end{equation}
Consider the affine map taking the triangle $OB_1A_2$ to $OB_2A_2$. 
Condition~\eqref{eq-ratios} means that the affine map takes $A_1$ to $A_3$. The latter is equivalent to $OA_1B_1A_2$ and $OA_2B_2A_3$ being affine symmetric.

(ii) 
Substituting expressions for $l_1+1,m_2+1,1-l_1m_1,1-l_2m_2$, we get 
$$
\frac{1-m_1l_1}{(1+l_1)^2}
=\frac{\overrightarrow{Q_1A_2}}{\overrightarrow{Q_1A_1}}\cdot\frac{\overrightarrow{Q_1O}}{\overrightarrow{Q_1B_1}}
\qquad\text{and}\qquad
\frac{1-m_2l_2}{(1+m_2)^2}
=\frac{\overrightarrow{Q_2A_2}}{\overrightarrow{Q_2A_3}}\cdot\frac{\overrightarrow{Q_2O}}{\overrightarrow{Q_2B_2}}.$$
The right sides are equal to the opposite ratios of the quadrilaterals $OA_1B_1A_2$ and $OA_2B_2A_3$.
\end{proof}


This restatement of conditions~(i)--(ii) of Theorem~\ref{th-classification} gives another proof that they are sufficient.

\begin{example}\label{ex-ii}
A $2\times 2$ net satisfying one of conditions~(i)--(ii) in Theorem~\ref{th-classification} is deformable. 
Moreover, for any edge, there is a deformation increasing its length and a deformation decreasing it.
\end{example}

\begin{proof} 
By Lemma~\ref{l-system}, it suffices to construct a family of solutions of system~\eqref{neweq-main-system}.

(i) Let both pairs $f_1,f_2$ and $f_3,f_4$ be affine symmetric with respect to the common sides. Then by Lemma~\ref{remark-lemma} we get $l_1=m_2$, $l_2=m_1$, $l_3=m_4$, $l_4=m_3$. For $m_1,m_3\ne0$, we get the following family: 
\begin{align*}
x_1(t) &= 1+t,& 
x_2(t) &= \frac{-(1+t)+\sqrt{(1+t)^2(1-l_1m_1) + m_1(l_1+m_1+2)}}{m_1},\\ 
x_3(t) &= 1+t,& 
x_4(t) &= \frac{-(1+t)+\sqrt{(1+t)^2(1-l_3m_3) + m_3(l_3+m_3+2)}}{m_3}.
\end{align*}
For $m_1=0$ or $m_3=0$ we set $x_2(t)=-l_1(1+t)/2$ or $x_4(t)=-l_3(1+t)/2$ respectively.

(ii) Let the equal opposite ratios of $f_i$ and $f_{i+1}$ be $1/k_{i+1}^2$, where $k_{i+1}>0$, for $i=1,2,3,4$ with $k_5=k_1$.
Then 
by Lemma~\ref{remark-lemma} we get (see an automated checking in~\cite[Section 1]{pirahmad-maple}) 
$$
l_i=\frac{k_{i+1}^2-1}{1+k_ik_{i+1}} \quad\text{and}\quad 
m_i=\frac{k_{i}^2-1}{1+k_ik_{i+1}}.
$$
We get the following family of solutions of system~\eqref{neweq-main-system} (see~\cite[Section 2]{pirahmad-maple}):
\begin{align*}
x_1(t) &= \frac{(1+t)^2(1-k_1) + 1+ k_1}{2(1+t)},& x_2(t) &= \frac{(1+t)^2(1+k_2) + 1- k_2}{2(1+t)},\\ x_3(t) &= \frac{(1+t)^2(1-k_3) + 1+ k_3}{2(1+t)},& x_4(t) &= \frac{(1+t)^2(1+k_4) + 1- k_4}{2(1+t)}.
\end{align*}

The `moreover' part holds for the edge $OA_1$ because $x_1'(0)\ne 0$ in both cases~(i) and~(ii). It remains to notice that if 
an area-preserving Combescure transformation increases the length of an edge, then it decreases the length of any adjacent edge in the same face.
\end{proof}

To show that conditions~(i)--(ii) in Theorem~\ref{th-classification} are necessary, 
consider space $\mathbb{R}^3$ with the coordinates $x_1,x_2,x_3$. 
Equations~\eqref{neweq-main-system} for $i=1$ and $2$
define two cylinders,
each being 
centrally symmetric with respect to the origin $O$.
Denote by $C_1$ their intersection. 
Analogously define the sets $C_i$ in space with the coordinates $x_i,x_{i+1},x_{i+2}$ for $i=2,3,4$, where the indices are cyclic modulo $4$.
Those sets are (affine) algebraic curves of degree at most $4$ which can split into several components. We first consider the case when each curve $C_1,\dots,C_4$ contains a \emph{conic}, i.e., an irreducible curve of degree $2$.

\begin{lemma} \label{l-conic} 
The curve $C_i$ contains a conic if and only if $f_i$ and $f_{i+1}$
have equal opposite ratios with respect to their common edge.
\end{lemma}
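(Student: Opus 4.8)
The plan is to recast the statement geometrically. The curve $C_1$ is the intersection of the two quadric surfaces (cylinders) $\{P_1(x_1,x_2)=0\}$ and $\{P_2(x_2,x_3)=0\}$ in $\mathbb{R}^3$, and the claim I would isolate first is: \emph{$C_1$ contains a conic if and only if some nontrivial real combination $\lambda P_1+\mu P_2$ factors as a product of two distinct real linear polynomials} (equivalently, the quadric $\{\lambda P_1+\mu P_2=0\}$ is reducible). For the ``only if'' direction, an irreducible conic $K\subset C_1$ spans a plane $\Pi$; since each surface $\{P_i=0\}$ is an irreducible cylinder over a nondegenerate conic (its homogenized $3\times3$ matrix has determinant $(l_i+m_i+2)(1-l_im_i)>0$ by Lemma~\ref{remark-lemma}), neither surface contains $\Pi$, so $P_1|_\Pi$ and $P_2|_\Pi$ are genuine conics in $\Pi$ both cutting out the nondegenerate $K$; hence they are proportional, and the matching combination $\lambda P_1+\mu P_2$ vanishes identically on $\Pi$, making $\{\lambda P_1+\mu P_2=0\}$ reducible. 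For the ``if'' direction, if $\lambda P_1+\mu P_2=L_1L_2$ with $\mu\neq0$, then on $\{P_1=0\}\cap\{L_1=0\}$ we get $\mu P_2=L_1L_2-\lambda P_1=0$, so this planar section of the cylinder $\{P_1=0\}$ lies in $C_1$; choosing the factor $L_i$ that genuinely involves $x_3$ makes the section an irreducible conic rather than a pair of rulings.

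Next I would compute the factorization condition algebraically. Since $P_1$ and $P_2$ carry no linear terms, $\lambda P_1+\mu P_2$ decomposes into a homogeneous quadratic part $\mathbf{x}^{\top}A\,\mathbf{x}$ with
\[
A=\begin{pmatrix}\lambda l_1 & \lambda & 0\\ \lambda & \lambda m_1+\mu l_2 & \mu\\ 0 & \mu & \mu m_2\end{pmatrix},
\]
plus the constant $-[\lambda(l_1+m_1+2)+\mu(l_2+m_2+2)]$. Such a polynomial factors into two real linear forms precisely when its homogenized $4\times4$ matrix has rank $\le2$, that is, when either $A$ has rank $2$ and the constant vanishes, or $A$ has rank $1$. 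A short check using $1-l_im_i>0$ (Lemma~\ref{remark-lemma}) rules out rank $1$: the $\{1,3\}$-minor of $A$ equals $\lambda\mu l_1m_2$, and forcing it to vanish drives $\lambda$ or $\mu$ to $0$, which leaves a rank-$2$ quadratic part. Hence the only possibility is the pair of conditions $\lambda(l_1+m_1+2)+\mu(l_2+m_2+2)=0$ and $\det A=0$. The first, since $l_i+m_i+2>0$, pins down $[\lambda:\mu]=[-(l_2+m_2+2):(l_1+m_1+2)]$, and a direct expansion gives $\det A=\lambda\mu\big[\lambda m_2(l_1m_1-1)+\mu l_1(l_2m_2-1)\big]$.

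Finally I would translate this into the opposite-ratio condition. Substituting the pinned ratio into $\det A=0$ and dividing by $\lambda\mu\neq0$ turns the condition into
\[
(l_2+m_2+2)\,m_2\,(1-l_1m_1)=(l_1+m_1+2)\,l_1\,(1-l_2m_2).
\]
The key simplification is the pair of identities $(l_1+m_1+2)l_1=(1+l_1)^2-(1-l_1m_1)$ and $(l_2+m_2+2)m_2=(1+m_2)^2-(1-l_2m_2)$; using them and cancelling the common term $(1-l_1m_1)(1-l_2m_2)$ recasts the condition as $(1-l_1m_1)(1+m_2)^2=(1-l_2m_2)(1+l_1)^2$, i.e.\ $\tfrac{1-l_1m_1}{(1+l_1)^2}=\tfrac{1-l_2m_2}{(1+m_2)^2}$, which by Lemma~\ref{remark-lemma}(ii) is exactly the equality of the opposite ratios of $f_1$ and $f_2$ with respect to their common edge. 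Positivity guarantees the rank-$2$ form is indefinite, so the two planes are real and the section is an honest real conic, in agreement with the explicit family of Example~\ref{ex-ii}. The claim for $C_2,C_3,C_4$ then follows by the cyclic relabeling $i\mapsto i+1$.

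The step I expect to be the main obstacle is the reducibility equivalence of the first paragraph together with its reality and irreducibility bookkeeping over $\mathbb{R}$: one must argue carefully that containing a conic forces a reducible member of the pencil, that this member is a pair of \emph{real} planes, and that at least one plane section of the cylinder is a nondegenerate conic rather than a pair of rulings. The determinant computation itself is routine, but it only exposes the opposite-ratio condition after the quadratic identity $(l+m+2)l=(1+l)^2-(1-lm)$ is applied, so keeping the algebra organized around that identity is what makes the proof transparent while avoiding complex projective space and resultants.
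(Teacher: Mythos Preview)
Your approach is essentially the paper's: both show that $C_1$ contains a conic precisely when a member of the pencil $\lambda P_1+\mu P_2$ is a pair of planes, and both arrive at the same determinant condition that collapses to $(1-l_1m_1)(1+m_2)^2=(1-l_2m_2)(1+l_1)^2$. The paper picks the combination $(l_2+m_2+2)P_1-(l_1+m_1+2)P_2$ directly for the ``only if'' part (after observing that the conic is centered at the origin) and a second explicit combination for the ``if'' part, whereas you package both directions into the rank criterion for the $4\times4$ homogenized matrix; the content is the same.

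Two points need tightening. First, your rank-$1$ exclusion cites the minor $\lambda\mu l_1m_2$, but this can vanish via $l_1=0$ or $m_2=0$; the off-diagonal $2\times2$ minor from rows $\{1,2\}$ and columns $\{2,3\}$ equals $\lambda\mu$ and gives the clean statement that $\mathrm{rank}\,A\geq2$ whenever $\lambda\mu\neq0$ (and the cases $\lambda=0$ or $\mu=0$ are immediate from $1-l_im_i\neq0$). Second, the sentence ``positivity guarantees the rank-$2$ form is indefinite'' is doing real work in the ``if'' direction and needs an argument: rank $2$ with signature $(2,0)$ is not yet excluded. One line suffices: since $P_1(1,1)=P_2(1,1)=0$ and the constant term of $\lambda P_1+\mu P_2$ vanishes, the vector $(1,1,1)$ is isotropic for the quadratic form $x^\top Ax$, yet the first entry of $A(1,1,1)^\top$ is $\lambda(l_1+1)\neq0$ by Lemma~\ref{remark-lemma}, so $(1,1,1)\notin\ker A$ and the form must be indefinite. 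With these two fixes your argument is complete and matches the paper's.
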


\begin{remark}\label{l-m2=l1=0} 
In particular, by Lemma~\ref{remark-lemma}(ii), if $m_2=l_1=0$ then $C_1$ contains a conic. 
\end{remark}

\begin{proof}[Proof]
Assume without loss of generality that $i=1$. 

Let us prove the `only if' part. Consider a conic contained in $C_1$. It is a plane section of the cylinder $P_1(x_1,x_2)=0$, which is hyperbolic because the discriminant $1-m_1l_1>0$ by Lemma~\ref{remark-lemma}.  Hence the conic is a hyperbola.
\hiddencomment{In the nonconvex setup:
It is a plane section of the cylinder $P_1(x_1,x_2)=0$, which is either hyperbolic or elliptic because the discriminant $1-m_1l_1$ does not vanish by Lemma~\ref{remark-lemma}.  Hence the conic is either an ellipse or a hyperbola.} Its center is the intersection of the axes $x_1=x_2=0$ and $x_2=x_3=0$ of the two cylinders $P_1(x_1,x_2)=0$ and $P_2(x_2,x_3)=0$ with the plane of the conic. Hence the center is the origin $O$.
%
Consider the quadric
\begin{equation}\label{eq-case2}
(l_2+m_2+2)P_1(x_1,x_2)-(l_1+m_1+2)P_2(x_2,x_3)=0.
\end{equation}
It contains the origin $O$ and our conic. Hence it splits into one or two planes,
both passing through the origin by central symmetry. 
Thus the determinant of the quadratic form in the left side of~\eqref{eq-case2} is (see~\cite[Section 3]{pirahmad-maple}) 
\begin{equation}\label{eq-equal-opposite}
(l_1+m_1+2)(l_2+m_2+2)\left((m_2+1)^2(1-m_1l_1)-(l_1+1)^2(1-l_2m_2)\right)=0.
\end{equation}
The first two factors are nonzero by Lemma~\ref{remark-lemma}.
By Lemma~\ref{remark-lemma}(ii) the desired opposite ratios are equal. 

Let us prove the `if' part. Assume that the opposite ratios are equal. Then by Lemma~\ref{remark-lemma}(ii) we get~\eqref{eq-equal-opposite}.
If $l_1=m_2=0$, then $C_1$ contains the conic $(x_1(t),x_2(t),x_3(t))=\left(\frac{m_1+2-m_1t^2}{2t},t,\frac{l_2+2-l_2t^2}{2t}\right)$. Otherwise assume that $l_1\ne0$ (the case $l_1=0$, $m_2\ne 0$ is similar). Then $C_1$ is contained in the quadric
\begin{multline*}
(1-l_2m_2)l_1P_1(x_1,x_2)-(1-l_1m_1)m_2P_2(x_2,x_3)
=\\=(1-l_2m_2)(l_1x_1+x_2)^2-(1-l_1m_1)(x_2+m_2x_3)^2
-(l_1+1)^2(1-l_2m_2)+(m_2+1)^2(1-l_1m_1)=0.
\end{multline*}
Here the free term vanishes by~\eqref{eq-equal-opposite}, and $1-l_1m_1,1-l_2m_2>0$ 
by Lemma~\ref{remark-lemma}. Thus we get a product of two linear polynomials in $x_1,x_2,x_3$, and the quadric is the union of two planes. Then $C_1$ is the intersection of the planes with the cylinder $P_2(x_2,x_3)=0$, i.e., contains a conic.
\end{proof}

We have the following direct corollary.

\begin{corollary} \label{l-all-conics} 
If $C_1,\dots,C_4$ all contain conics, then 
condition~(ii) of Theorem~\ref{th-classification} holds.
\end{corollary}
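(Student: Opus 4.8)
The plan is to apply Lemma~\ref{l-conic} directly, once for each index $i=1,2,3,4$. By that lemma, the curve $C_i$ contains a conic precisely when the two faces $f_i$ and $f_{i+1}$ sharing the edge $OA_{i+1}$ have equal opposite ratios with respect to that edge. So under the hypothesis that all four curves $C_1,\dots,C_4$ contain conics, I would immediately read off that all four adjacent pairs of faces have equal opposite ratios with respect to their common edges.

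The only remaining step is a combinatorial observation: the four pairs $(f_1,f_2),(f_2,f_3),(f_3,f_4),(f_4,f_1)$ exhaust every pair of faces sharing an edge. Indeed, in a $2\times 2$ net the interior edges are exactly $OA_1,\dots,OA_4$, the edges emanating from the central vertex $O$, and each such edge $OA_{i+1}$ is the common edge of exactly the pair $f_i,f_{i+1}$. Hence the conjunction of the four equal-opposite-ratio statements coming from $C_1,\dots,C_4$ is literally condition~(ii) of Theorem~\ref{th-classification}, and the corollary follows.

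There is no substantive obstacle: the real content has already been isolated in Lemma~\ref{l-conic}, and the corollary is merely the conjunction of its four instances. The only point deserving a moment's care is the index bookkeeping, namely checking that the cyclic convention (indices modulo $4$, so $f_5=f_1$ and $OA_5=OA_1$) correctly pairs each curve $C_i$ with the faces meeting along $OA_{i+1}$, so that no adjacent pair is omitted or counted twice.
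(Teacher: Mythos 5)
Your proposal is correct and matches the paper's reasoning: the paper presents this as a ``direct corollary'' of Lemma~\ref{l-conic}, obtained exactly as you describe by applying that lemma for $i=1,2,3,4$ and noting that the adjacent pairs $(f_i,f_{i+1})$, with common edges $OA_{i+1}$, are precisely all pairs of faces sharing a common edge, so their conjunction is condition~(ii) of Theorem~\ref{th-classification}.
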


Now we turn to the case when one of the curves $C_i$, say, $C_1$, does not contain a conic. In this case we find the orthogonal projection of $C_1$ to the $x_1x_3$-plane, or, thinking algebraically, we eliminate 
$x_2$ from the system $P_1(x_1,x_2)=P_2(x_2,x_3)=0$. 
We need the following standard observation.

\begin{lemma} \label{ll-projection}
    The orthogonal projection of an algebraic curve $C\subset\mathbb{R}^3$ to the $x_1x_3$-plane preserves the degree of $C$, if the projection is an injective map outside a finite subset, the inverse map is given by rational functions, and the projectivization of $C$ does not contain the improper point of the $x_2$-axis.
\end{lemma}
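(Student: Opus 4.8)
The plan is to reinterpret the orthogonal projection as a central projection from a point and then compare degrees by counting intersections with a generic line. First I would pass to the projective closures $\bar C\subset\mathbb{P}^3$ and $\bar{C}'\subset\mathbb{P}^2$, where $C'$ denotes the image curve in the $x_1x_3$-plane, and recall that the degree of an algebraic curve equals the number of its intersection points, counted with multiplicity over $\mathbb{C}$, with a generic hyperplane. The orthogonal projection $(x_1,x_2,x_3)\mapsto(x_1,x_3)$ extends to the central projection $\pi\colon\mathbb{P}^3\dashrightarrow\mathbb{P}^2$ from the improper point $p$ of the $x_2$-axis onto the plane $x_2=0$. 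The only point with no image is the center $p$ itself; by the third hypothesis $p\notin\bar C$, so $\pi$ restricts to an honest morphism on $\bar C$ and $\bar{C}'=\pi(\bar C)$.

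Next I would carry out the degree count. Fix a generic line $\ell\subset\mathbb{P}^2$ and let $\Pi:=\pi^{-1}(\ell)$ be its preimage, a plane through $p$. Since $p\notin\bar C$, we have $\pi^{-1}(\ell)\cap\bar C=\Pi\cap\bar C$, and hence $\bar{C}'\cap\ell=\pi(\Pi\cap\bar C)$. By B\'ezout's theorem applied to $\bar C$, of degree $d:=\deg C$, and the plane $\Pi$, for generic $\ell$ the intersection $\Pi\cap\bar C$ consists of exactly $d$ distinct points, none of which lies in the finite locus where $\pi$ is not injective. The first hypothesis then gives $d$ distinct images on $\ell$, and the second hypothesis (rational inverse) makes $\pi|_{\bar C}$ birational onto $\bar{C}'$, so that each of these transverse intersection points is counted with multiplicity one on the image side as well. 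Therefore $\#(\bar{C}'\cap\ell)=d$ for generic $\ell$, which is exactly $\deg\bar{C}'=\deg C'$, as claimed.

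The hard part will be the genericity bookkeeping together with the passage between the real and complex pictures. Concretely, I must choose $\ell$ so as to avoid simultaneously (a) the images of the finitely many points where $\pi$ fails to be injective, (b) the finitely many planes $\Pi$ for which $\Pi\cap\bar C$ is non-reduced or meets the line at infinity non-transversally, and (c) the improper point of $\ell$. Each of these is a proper closed condition on $\ell$ in the dual $\mathbb{P}^2$ of lines, so a generic $\ell$ escapes all of them at once. The subtle role of the rational-inverse hypothesis is precisely to upgrade ``injective on the real points outside a finite set'' to genuinely \emph{birational}, i.e.\ degree one of the induced extension of function fields: without it the complexified projection could a priori be several-to-one, and the image degree would only be a proper divisor of $d$. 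Granting this birationality, the counting above yields $\deg\bar{C}'=d$, and since the projective closure does not add degree, the affine projection of $C$ preserves the degree, which completes the argument.
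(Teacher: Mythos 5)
Your proposal is correct and follows essentially the same route as the paper: both pass to the complex projective setting, use the rational-inverse hypothesis to make the extended projection injective (birational), intersect with a generic plane through the improper point of the $x_2$-axis (equivalently, the preimage of a generic line $\ell\subset\mathbb{P}^2$), and conclude by B\'ezout that the curve and its image meet such a plane/line in the same number $d$ of points. Your version merely spells out the genericity bookkeeping that the paper compresses into ``not tangent to $C$ and not containing the excluded and singular points.''
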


\begin{proof}
    This is a step where we need slightly more advanced tools. We extend the projection to the complex projective space. Since the inverse map is given by rational functions, it also extends, and thus the projection remains injective. Take a 
    plane passing through the improper point of the $x_2$-axis but not tangent to $C$ and not containing the excluded and singular points of $C$. The plane has the same number of intersection points with $C$ and its projection. By the Bezout theorem, we are done.
\end{proof}

We say that an algebraic equation is \emph{reduced} if it has a minimal degree among all the equations with the same solution set in $\mathbb{R}^2$.

\begin{lemma} \label{l-projection} 
If $C_1$ does not contain a conic, then its orthogonal projection to the $x_1x_3$-plane is an irreducible curve 
(possibly with finitely many points excluded), given by one of the following 
equations:
\begin{enumerate}
    \item[\textup{(a)}] 
for $m_1\ne 0$ or $l_2\ne 0$, it is given by the reduced degree $4$ equation
\end{enumerate}
\begin{equation}\label{eq-deg4-3}
    \small{\left|
\begin{array}{cccc}
m_1                 & 0                   & l_2                 & 0 \\
2x_1                & m_1                 & 2x_3                & l_2\\
l_1x_1^2-(l_1+m_1+2)& 2x_1                & m_2x_3^2-(l_2+m_2+2)& 2x_3      \\
0                   & l_1x_1^2-(l_1+m_1+2)& 0                   & m_2x_3^2-(l_2+m_2+2) 
\end{array}
    \right|=0}
\end{equation}
%
\begin{enumerate}
\item[\textup{(b)}] 
for $m_1=l_2= 0$, it is given by the reduced degree $3$ equation 
\end{enumerate}
\begin{equation}
    \label{express-x2-m_1=l_2=0}
    l_1x_1^2x_3-m_2x_3^2x_1-(l_1+2)x_3+(m_2+2)x_1=0.
\end{equation} 
\end{lemma}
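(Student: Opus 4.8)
The plan is to eliminate $x_2$ from the pair $P_1(x_1,x_2)=P_2(x_2,x_3)=0$ and to recognize the eliminant as the resultant $\mathrm{Res}_{x_2}(P_1,P_2)$, whose Sylvester matrix is exactly the one displayed in~\eqref{eq-deg4-3}. Viewing $P_1$ as a quadratic in $x_2$ with leading coefficient $m_1$, and $P_2$ as a quadratic in $x_2$ with leading coefficient $l_2$, the resultant vanishes at a point $(x_1,x_3)$ precisely when $P_1(x_1,\cdot)$ and $P_2(\cdot,x_3)$ have a common root $x_2$, provided $m_1\ne 0$ or $l_2\ne 0$. This dichotomy is exactly what separates cases~(a) and~(b). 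So in case~(a) the Zariski closure of the projection of $C_1$ to the $x_1x_3$-plane is the zero set of~\eqref{eq-deg4-3}, and it remains to prove that this set is an irreducible curve and that~\eqref{eq-deg4-3} is reduced of degree~$4$.

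First I would show that $C_1$ itself is an irreducible quartic. As the intersection of two distinct irreducible quadric cylinders, $C_1$ has degree $4$ by B\'ezout; the cylinders are irreducible because their base conics $l_ix_i^2+2x_ix_{i+1}+m_ix_{i+1}^2=l_i+m_i+2$ are genuine hyperbolas (the discriminant $1-l_im_i>0$ and the constant $l_i+m_i+2>0$ by Lemma~\ref{remark-lemma}). By hypothesis $C_1$ contains no conic (cf.\ Lemma~\ref{l-conic}), so it suffices to exclude line components. The cylinder $P_1=0$ is ruled by lines parallel to the $x_3$-axis and $P_2=0$ by lines parallel to the $x_1$-axis, so a common line would have to be a ruling of both, which is impossible; and a direct check shows that no single ruling of one cylinder lies entirely on the other. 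Since the degrees of the components of $C_1$ sum to $4$ and none is a line or a conic, $C_1$ is irreducible of degree~$4$.

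Next I would transfer this to the projection via Lemma~\ref{ll-projection}, whose three hypotheses hold in case~(a). The improper point $[0:1:0:0]$ of the $x_2$-axis lies on the projectivized cylinder $P_1=0$ only when $m_1=0$ and on $P_2=0$ only when $l_2=0$, hence it is absent from $C_1$ exactly because $m_1\ne0$ or $l_2\ne0$. The inverse of the projection is rational: the combination $m_1P_2-l_2P_1$ is linear in $x_2$, so $x_2$ is a rational function of $x_1,x_3$ with denominator $m_1x_3-l_2x_1$, defined off a line. Finally the projection is injective away from a finite set, since two quadratics in $x_2$ sharing both roots are proportional, a condition confining $(x_1,x_3)$ to a proper subvariety that meets the curve in finitely many points. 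Lemma~\ref{ll-projection} then yields that the projection preserves degree, so the image is an irreducible curve of degree~$4$; the resultant~\eqref{eq-deg4-3}, having degree $4$ and vanishing exactly on this curve, is therefore its reduced equation.

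For case~(b) with $m_1=l_2=0$, the improper point $[0:1:0:0]$ now lies on $C_1$, so Lemma~\ref{ll-projection} no longer applies and the degree of the projection drops. Here $P_1$ and $P_2$ are both linear in $x_2$: solving $P_1=0$ gives $x_2=\left((l_1+2)-l_1x_1^2\right)/(2x_1)$, and substituting into $P_2=0$ and clearing denominators yields precisely~\eqref{express-x2-m_1=l_2=0}. The same no-line/no-conic argument shows $C_1$ is again an irreducible quartic, so its projection is irreducible; because $C_1$ passes once through the center of projection, the degree drops by one to~$3$, and~\eqref{express-x2-m_1=l_2=0}, a cubic vanishing on this irreducible cubic image, is reduced. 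I expect the main obstacle to be the irreducibility bookkeeping rather than the elimination itself: ruling out line components of $C_1$ and then correctly controlling the behaviour of the degree under projection, where the key conceptual point is that the presence of the improper point of the $x_2$-axis on $C_1$ is exactly what lowers the degree from $4$ to $3$ and thereby distinguishes the two cases.
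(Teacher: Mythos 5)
Your strategy is essentially the paper's own (eliminate $x_2$, recognize the eliminant as the resultant, and transfer degree and irreducibility from $C_1$ to its projection via Lemma~\ref{ll-projection}), but the pivotal claim that $C_1$ is an irreducible \emph{quartic} --- equivalently, that the resultant genuinely has degree $4$ --- is never established, and this is a genuine gap. Your B\'ezout argument excludes only \emph{affine} line components of $C_1$: projectively, each cylinder is a rank-$3$ quadric cone whose lines all pass through its vertex at infinity ($[0:0:1:0]$ for $P_1=0$ and $[1:0:0:0]$ for $P_2=0$), and the line at infinity $\{x_2=w=0\}$ joining the two vertices lies on \emph{both} quadrics precisely when $l_1=m_2=0$. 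In that case $C_1$ is itself a conic (solve $P_1=0$ for $x_1$ and $P_2=0$ for $x_3$ as functions of $x_2$), and the resultant visibly collapses: the coefficients $l_1x_1^2-(l_1+m_1+2)$ and $m_2x_3^2-(l_2+m_2+2)$ become constants, so \eqref{eq-deg4-3} has degree at most $2$. Hence the assertion ``the resultant has degree $4$'' is exactly as strong as ruling out $l_1=m_2=0$, and the only way to rule it out is the no-conic hypothesis applied through Lemma~\ref{l-conic} (Remark~\ref{l-m2=l1=0}). The paper does precisely this by computing the coefficients of $x_3^4$ and $x_1^2x_3^2$ and invoking that remark; your proof never makes the connection, and your irreducibility argument is blind to the case where it fails.

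There are two further, smaller gaps. First, the resultant vanishes at $(x_1,x_3)$ when the two quadratics have a common \emph{complex} root, whereas membership in the projection of $C_1$ requires a common \emph{real} root; since the lemma asserts equality up to finitely many points, you must show the exceptional set (where the quadratics are proportional with a conjugate pair of non-real roots) is finite --- which again needs the no-conic hypothesis, because an infinite proportionality locus would put a planar conic inside $C_1$. (The paper sidesteps this by solving for $x_2$ rationally off the line $l_2x_1=m_1x_3$ and treating that line separately.) Second, in case (b) your degree count rests on the unproved assertion that the projective closure of $C_1$ meets the center of projection $[0:1:0:0]$ with multiplicity exactly one: both ends $x_2\to+\infty$ and $x_2\to-\infty$ of $C_1$ tend to that point, so one must verify that they fit together into a single smooth branch rather than crossing in a node, which would drop the image degree to $2$ instead of $3$. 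The paper avoids any such local analysis at infinity by checking directly that the cubic \eqref{express-x2-m_1=l_2=0} cuts out the projection away from the origin, and that a smaller-degree or reducible equation would force $C_1$ to be, or to contain, a line or a conic.
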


\begin{remark}
    In either case, the left side of~\eqref{eq-deg4-3}--\eqref{express-x2-m_1=l_2=0} is the resultant of $P_1(x_1,x_2)$ and~$P_2(x_2,x_3)$.
\end{remark}

\begin{proof}[Proof]
Notice that $C_1$ contains no straight lines; otherwise, this line lies in both cylinders and hence is parallel to both axes $Ox_1$ and $Ox_3$, which is impossible. 

    (a): Assume that $m_1\ne 0$ without loss of generality. 
    Eliminate the terms quadratic in $x_2$ from the system $P_1(x_1,x_2)=P_2(x_2,x_3)=0$ by taking 
\begin{equation}
    \label{express-x2}
    \small{l_2P_1(x_1,x_2)-m_1P_2(x_2,x_3)=2x_2(l_2x_1-m_1x_3)+
    \underbrace{l_1l_2x_1^2-m_1m_2x_3^2+m_1m_2-l_1l_2+2(m_1-l_2)}_{Q(x_1,x_3)}=0.}
\end{equation}
Expressing $x_2$ (or better $2x_2(l_2x_1-m_1x_3)$) through the resulting expression $Q(x_1,x_3)$ and substituting into the equation $4(l_2x_1-m_1x_3)^2\cdot P_1(x_1,x_{2})=0$, we get
\begin{equation}
    \label{eq-deg4}
    4(l_1x_1^2-l_1-m_1-2)(l_2x_1-m_1x_3)^2-4x_1(l_2x_1-m_1x_3)Q(x_1,x_3)+
m_1Q(x_1,x_3)^2=0.
\end{equation}
The resulting equation is equivalent to~\eqref{eq-deg4-3} (see~\cite[Section 4]{pirahmad-maple}) 
and describes a set \emph{containing} the projection of $C_1$. 

Let us show that the set actually \emph{coincides} with the projection after removing finitely many points. Indeed, take an arbitrary $(x_1,x_3)$ satisfying~\eqref{eq-deg4}. Consider the following two possibilities.

If $l_2x_1-m_1x_3\ne 0$, then set $x_2=Q(x_1,x_3)/2(l_2x_1-m_1x_3)$ so that~\eqref{express-x2} is satisfied. Then \eqref{eq-deg4} implies $P_1(x_1,x_2)=0$. Then by~\eqref{express-x2} we get $P_2(x_2,x_3)=0$ because $m_1\ne 0$. Thus $(x_1,x_2,x_3)\in C_1$ and $(x_1,x_3)$ belongs to the projection of $C_1$.

If $l_2x_1-m_1x_3= 0$, then \eqref{eq-deg4} implies $Q(x_1,x_3)=0$. 
There are only finitely many such points $(x_1,x_3)$ unless the linear polynomial $l_2x_1-m_1x_3$ divides $Q(x_1,x_3)$. Let us show that the latter is impossible.  
Indeed, otherwise $l_2x_1-m_1x_3$ divides the right side of~\eqref{express-x2}
as well. Hence the latter splits into two linear factors, and~\eqref{express-x2} defines the union of two planes (possibly coincident). 
Then $C_1$ is the intersection of those two planes with the cylinder $P_1(x_1,x_2)=0$. Hence $C_1$ contains a conic or a line. This contradiction shows
that curve~\eqref{eq-deg4} and the line $l_2x_1-m_1x_3= 0$ have finitely many common points, which we just drop.

%
%
%
%

Now we show that~\eqref{eq-deg4} is an irreducible curve of degree $4$. 
Since $C_1$ does not contain conics or lines and has degree at most $4$, it follows that $C_1$ is irreducible of degree $4$ or $3$. Then curve~\eqref{eq-deg4} is also irreducible as a projection of an irreducible curve.  
However, 
equation~\eqref{eq-deg4} can a priori be non-reduced: its left side can be a complete square or cube or have a non-constant factor without real roots. In either case, curve~\eqref{eq-deg4} would have degree at most $2$. This would contradict to Lemma~\ref{ll-projection}.
The assumptions of the lemma hold 
because for $l_2x_1-m_1x_3\ne 0$ the second coordinate of a point $(x_1,x_2,x_3)\in C_1$ is uniquely determined by $x_1$ and $x_3$ via~\eqref{express-x2}, the line $l_2x_1-m_1x_3=0$ intersects curve~\eqref{eq-deg4} only at finitely many points by the above, and 
the projectivization of the cylinder $P_1(x_1,x_2)=0$ does not contain the improper point of the $x_2$-axis by the assumption $m_1\ne 0$. 
Thus~\eqref{eq-deg4} is reduced of degree $4$ or $3$. Computing the coefficients at $x_3^4$ and $x_1^2x_3^2$ and using $m_1\ne 0$, we see that~\eqref{eq-deg4} has degree $4$ unless $m_2=l_1=0$. The latter possibility is ruled out by Remark~\ref{l-m2=l1=0}.
%

    (b): $m_1=l_2= 0$.  In this case, we eliminate $x_2$ by taking 
\begin{equation}
    \label{eq-express-x2-m_1=l_2=0}
    x_3P_1(x_1,x_2)-x_1P_2(x_2,x_3)
    =l_1x_1^2x_3-m_2x_3^2x_1-(l_1+2)x_3+(m_2+2)x_1=0.
\end{equation}
The resulting equation describes a set \emph{containing} the projection of $C_1$ to the $x_1x_3$-plane.

Set~\eqref{eq-express-x2-m_1=l_2=0} actually \emph{coincides} with the projection after removing the origin. Indeed, take an arbitrary $(x_1,x_3)\ne (0,0)$ satisfying~\eqref{eq-express-x2-m_1=l_2=0}. We have $x_1,x_3\ne 0$, otherwise $l_1=-2$ or $m_2=-2$ contradicting to Lemma~\ref{remark-lemma}. 
Set $x_2=(l_1+2-l_1x_1^2)/(2x_1)=(m_2+2-m_2x_3^2)/(2x_3)$. Then $P_1(x_1,x_2)=P_2(x_2,x_3)=0$. Thus $(x_1,x_2,x_3)\in C_1$ and $(x_1,x_3)$ belongs to the projection of $C_1$.

Finally, we show that~\eqref{eq-express-x2-m_1=l_2=0} is an irreducible cubic curve. Eq.~\eqref{eq-express-x2-m_1=l_2=0} is reduced and has degree $3$, otherwise the curve is a line, hence $C_1$ is the intersection of a plane and cylinder, i.e., a conic. Similarly, the curve is irreducible, otherwise it contains a line, and hence $C_1$ contains a conic or a line.
\end{proof}

For the curve $C_3$, once it does not contain a conic, the projection to the $x_1x_3$-plane is given by similar equations~\eqref{eq-deg4-3}--\eqref{express-x2-m_1=l_2=0}, only $l_1,m_1,l_2,m_2$ are replaced by $m_4,l_4,m_3,l_3$ respectively. If the $2\times 2$ net is deformable, then by Lemma~\ref{l-system} 
the projections of $C_1$ and $C_3$ have a common curve $(x_1(t),x_3(t))$. But both projections are irreducible, thus 
their reduced equations are \emph{proportional}, i.e., the left sides become equal polynomials after multiplication by a nonzero constant. Let us study two typical possibilities for that.

\begin{lemma} \label{l-proportional}
    Assume that $C_1$ does not contain a conic. 
    Denote by \textup{(\ref{eq-deg4-3}')--(\ref{express-x2-m_1=l_2=0}')} equations~\eqref{eq-deg4-3}--\eqref{express-x2-m_1=l_2=0} with $l_1,m_1,l_2,m_2$ replaced by $m_4,l_4,m_3,l_3$ respectively. If 
    at least one pair of equations
    \begin{itemize}
    \item[(a)] \textup{(\ref{eq-deg4-3}')} and~\eqref{eq-deg4-3}, where $m_1\ne 0$; 
    \item[(b)] \textup{(\ref{express-x2-m_1=l_2=0}')} and \eqref{express-x2-m_1=l_2=0}, where $m_1=l_2=m_3=l_4=0$;
    \end{itemize}
    is proportional, then the $2\times 2$ net satisfies condition~(i) of Theorem~\ref{th-classification}. 
\end{lemma}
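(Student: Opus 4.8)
The plan is to use that, by Lemma~\ref{l-projection}, each of the two projections is an \emph{irreducible} plane curve given by a \emph{reduced} equation; consequently, proportionality means that the two reduced polynomials coincide after multiplication by a single nonzero scalar $\lambda$. The strategy is then to match coefficients monomial by monomial, solve the resulting system for the simple ratios $l_i,m_i$, and finally read off affine symmetry from Lemma~\ref{remark-lemma}(i). I would dispose of the cubic case (b) first and then attack the quartic case (a).

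For case (b) the two cubics \textup{(\ref{express-x2-m_1=l_2=0}')} and~\eqref{express-x2-m_1=l_2=0} each carry only the four monomials $x_1^2x_3,\ x_1x_3^2,\ x_3,\ x_1$, so proportionality is the single chain $l_1:m_2:(l_1+2):(m_2+2)=m_4:l_3:(m_4+2):(l_3+2)$. Comparing the first and third entries gives $l_1(m_4+2)=m_4(l_1+2)$, hence $l_1=m_4$; comparing the second and fourth gives $m_2=l_3$. Together with the standing hypotheses $m_1=l_2=m_3=l_4=0$, these are exactly the equalities $l_4=m_1$, $l_1=m_4$, $l_2=m_3$, $l_3=m_2$, which by Lemma~\ref{remark-lemma}(i) say that the pairs $\{f_4,f_1\}$ and $\{f_2,f_3\}$ are affine symmetric, i.e.\ condition~(i) of Theorem~\ref{th-classification} holds.

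For case (a) I would expand the quartic $F(x_1,x_3)$ of~\eqref{eq-deg4} and the quartic $G(x_1,x_3)$ obtained from it by the substitution $l_1\mapsto m_4,\ m_1\mapsto l_4,\ l_2\mapsto m_3,\ m_2\mapsto l_3$, and impose $F=\lambda G$ on the nine coefficients. The four top-degree coefficients are bare monomials, so the scalar $\lambda$ cancels in their pairwise ratios: comparing the $x_1^4,x_1^3x_3$ pair and the $x_3^4,x_1x_3^3$ pair yields the product relations $l_1l_2=m_3m_4$ and $m_1m_2=l_3l_4$, and pins down $\lambda=m_1/l_4$ through the $x_1^3x_3$ coefficient. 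Substituting $\lambda$ back, the remaining comparisons collapse because the product relations cancel the cross terms: the $x_1^2x_3^2$, $x_1x_3$, and constant coefficients give, respectively, $l_1m_1+l_2m_2=m_3l_3+m_4l_4$, then $(m_1+1)(l_2+1)=(l_4+1)(m_3+1)$, and then $c^2=c'^2$, where $c=m_1m_2-l_1l_2+2(m_1-l_2)$ is the constant term of $Q$ in~\eqref{express-x2} and $c'$ is its primed analog. Since $c-c'=2\big((m_1-l_2)-(l_4-m_3)\big)$, the sign $c=c'$ forces $m_1-l_2=l_4-m_3$; fed into $(m_1+1)(l_2+1)=(l_4+1)(m_3+1)$ this factors as $(l_2-m_3)\big((l_2+1)+(l_4+1)\big)=0$, and because $l_2+1,l_4+1>0$ by Lemma~\ref{remark-lemma} the second factor is positive, so $l_2=m_3$ and hence $m_1=l_4$. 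The product relations then give $l_1=m_4$ and $m_2=l_3$ (using the relation $l_1m_1+l_2m_2=m_3l_3+m_4l_4$ to cover the degenerate corner $l_2=m_3=0$), and Lemma~\ref{remark-lemma}(i) again produces the affine-symmetric pairs $\{f_4,f_1\}$ and $\{f_2,f_3\}$.

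The main obstacle is the quartic bookkeeping in case (a): controlling the nine coefficients and verifying that neither the opposite sign branch $c=-c'$ nor the degenerate vanishings (such as $l_1l_2=0$) yields a spurious solution outside condition~(i). I would manage this by leaning on the leading coefficients for the clean product relations and on the positivity constraints $l_i+1,m_i+1,1-l_im_i>0$ of Lemma~\ref{remark-lemma} to discard the reflected/sign-flipped branches, the subcase $m_2=l_1=0$ being already excluded by Remark~\ref{l-m2=l1=0}; the remaining routine polynomial identities can be confirmed by the symbolic computation in~\cite{pirahmad-maple}. The delicate point to get right is that the two quadratic product relations alone only match \emph{products} of simple ratios, and it is precisely the lower-order coefficients together with positivity that upgrade them to the individual equalities characterizing affine symmetry.
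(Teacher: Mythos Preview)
Your treatment of case~(b) is correct and coincides with the paper's.

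In case~(a) you have the right overall strategy (coefficient matching of the two quartics), and the computations you do carry out are correct; in particular, the product relations $l_1l_2=m_3m_4$, $m_1m_2=l_3l_4$, the identity $(m_1+1)(l_2+1)=(l_4+1)(m_3+1)$ from the $x_1x_3$ coefficient, and the factorization on the branch $c=c'$ are all fine. But there is a genuine gap: you use only seven of the nine coefficient equations, never invoking the $x_1^2$ and $x_3^2$ comparisons (the paper's~\eqref{eq-system-proportional-8}--\eqref{eq-system-proportional-9}), and with only your seven relations the branch $c=-c'$ is \emph{not} excluded by positivity alone. Likewise, your determination $\lambda=m_1/l_4$ (equivalently $\alpha=1$) from the $x_1^3x_3$ coefficient presupposes $l_1l_2\ne 0$; when $l_1l_2=0$ that equation is vacuous and a separate argument is needed. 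Deferring both issues to ``routine symbolic computation'' is precisely where the substance of the proof lies.

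The paper's route in case~(a) avoids the $c=\pm c'$ dichotomy entirely. After writing down all nine coefficient equalities~\eqref{eq-system-proportional-1}--\eqref{eq-system-proportional-9}, it first establishes $\alpha=1$ by a short case split (on whether $m_3=0$), using~\eqref{eq-system-proportional-8}--\eqref{eq-system-proportional-9} and the positivity $l_i+m_i+2>0$ to kill the $\alpha\ne 1$ possibility. Then it solves~\eqref{eq-system-proportional-1},~\eqref{eq-system-proportional-3},~\eqref{eq-system-proportional-6} for $l_2,l_3,m_4$ in terms of the remaining variables and substitutes into~\eqref{eq-system-proportional-7} \emph{and}~\eqref{eq-system-proportional-8}; each resulting expression factors with a visible factor $(l_4-m_1)$, and combining the two residual factors via positivity forces $l_4=m_1$, whence $l_2=m_3$, $l_1=m_4$, $m_2=l_3$. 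The moral is that the $x_3^2$ coefficient equation~\eqref{eq-system-proportional-8} is not optional: it supplies exactly the second relation needed to close the argument where your $c=-c'$ branch remains open.
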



\begin{proof}[Proof]
(a) 
Assume that (\ref{eq-deg4-3}') equals~\eqref{eq-deg4-3} multiplied by some $\alpha\ne 0$.
Comparing the coefficients at $x_1x_3^3$, $x_3^4$, $x_1^3x_3$, $x_1^4$, $x_1^2x_3^2$, $x_1x_3$, $1$, $x_3^2$, $x_1^2$ respectively, we get the 
system (see~\cite[Section 5]{pirahmad-maple}): 
\begin{align}
    \label{eq-system-proportional-1}
    l_4l_3&=\alpha m_1m_2\\
    \label{eq-system-proportional-2}
    l_4^2l_3^2&=\alpha m_1^2m_2^2\\ 
    \label{eq-system-proportional-3}
    m_4m_3&=\alpha l_1l_2\\
    \label{eq-system-proportional-4}
    m_4^2m_3^2&=\alpha l_1^2l_2^2\\
    \label{eq-system-proportional-5}
    l_3m_3l_4m_4  - 2l_4m_4- 2l_3m_3&=\alpha \Big(l_1m_1l_2m_2 - 2l_1m_1 - 2l_2m_2\Big)\\
    \label{eq-system-proportional-6}
    (2m_3 + l_3 + 2)l_4 + (m_4 + 2)m_3&=\alpha \Big((2l_2 + m_2 + 2)m_1 + (l_1 + 2)l_2\Big)\\
    \label{eq-system-proportional-7}
    \Big((m_4 + 2)m_3 - l_4(l_3 + 2)\Big)^2&=\alpha \Big((l_1 + 2)l_2 - m_1(m_2 + 2)\Big)^2\\
    \label{eq-system-proportional-8}
    l_4\Big((l_3^2+2l_3+2)l_4-(l_3m_3-2)(m_4+2)\Big)&=\alpha m_1\Big((m_2^2 + 2m_2 + 2)m_1 - (l_2m_2 - 2)(l_1 + 2)\Big)\\
    \label{eq-system-proportional-9}
    \Scale[0.95]{m_3\Big((m_4^2 + 2m_4 + 2)m_3 - (l_4m_4 - 2)(l_3 + 2)\Big)}&=\Scale[0.95]{\alpha l_2\Big((l_1^2+2l_1+2)l_2-(l_1m_1-2)(m_2+2)\Big).}
\end{align}
Let us prove that all the solutions of the system are given by $\alpha=1$ 
and $(l_1,m_1,l_2,m_2)=(m_4,l_4,m_3,l_3)$.

First note that $l_4\ne 0$. Indeed, otherwise \eqref{eq-system-proportional-1} implies $m_2=0$ and \eqref{eq-system-proportional-8} implies $l_1+m_1+2=0$ which contradicts Lemma~\ref{remark-lemma}. Now consider two cases.

Case 1: $m_3=0$. Then we get $l_2=0$ from~(\ref{eq-system-proportional-3}) if $l_1\not=0$, and from~(\ref{eq-system-proportional-9}) if $l_1=0$ because of $l_2+m_2+2\not=0$. 
If $\alpha\not=1$, then using $m_1,l_4\ne 0$, from~\eqref{eq-system-proportional-1}--\eqref{eq-system-proportional-2} we get $m_2=l_3=0$. Using  $l_2,m_2,l_3,m_3=0$ in~(\ref{eq-system-proportional-6})--(\ref{eq-system-proportional-7}), we obtain $l_4=\alpha m_1$ and $l_4^2=\alpha m_1^2$ which together give $\alpha=1$, contradiction. Therefore, $\alpha=1$, and using $m_3=l_2=0$ in~(\ref{eq-system-proportional-1}),~(\ref{eq-system-proportional-5}),~(\ref{eq-system-proportional-6}), we get $l_3l_4=m_1m_2$, $l_4m_4=l_1m_1$, $l_4=m_1$. These all together give $(l_1,m_1,l_2,m_2)=(m_4,l_4,m_3,l_3)$ and we are done.

Case 2: $m_3\ne 0$. 
First, let us prove that $\alpha=1$. Assume the converse. Since $m_1,l_4\ne 0$, from~\eqref{eq-system-proportional-1}--\eqref{eq-system-proportional-2} we get $m_2=l_3=0$, and from~\eqref{eq-system-proportional-3}--\eqref{eq-system-proportional-4} we get $l_1l_2=m_4m_3=0$. Since $l_1\ne 0$ by Remark~\ref{l-m2=l1=0} and $m_3\ne 0$, we obtain $l_2=m_4=0$. Then~\eqref{eq-system-proportional-9} becomes $m_3(l_3+m_3+2)=0$
which contradicts to Lemma~\ref{remark-lemma}. Thus $\alpha=1$.

Then ~\eqref{eq-system-proportional-1} and~\eqref{eq-system-proportional-3} become $l_3l_4=m_1m_2$ and $l_1l_2=m_3m_4$, respectively. Using the latter equations in~(\ref{eq-system-proportional-6}), we express 
\begin{equation} \label{eq-expressions}
l_2=\frac{(m_3+1)(l_4+1)}{m_1+1}-1, \quad l_3=\frac{m_1m_2}{l_4}, \quad m_4=\frac{l_1l_2}{m_3}=\frac{l_1(m_3l_4+m_3+l_4-m_1)}{m_3(m_1+1)}.
\end{equation}
Substituting \eqref{eq-expressions} into~(\ref{eq-system-proportional-7}) and~(\ref{eq-system-proportional-8}), we get (see~\cite[Section 6]{pirahmad-maple}) 
\begin{align*}
(m_1 + m_3 + 2)(l_4-m_1)\Big(\underbrace{(m_3+1)(l_1+1)(l_4+1)-(m_1+1)(m_1m_2+l_1+l_4+m_1-m_3+1)}_A\Big)&=0,\\
(l_4-m_1)\Big(A+(1 + m_3)m_1(\underbrace{m_1m_2 + m_2m_3 + l_1 + m_2 + l_4 + m_1 + 2}_B)\Big)&=0.
\end{align*}

Now if $l_4\not=m_1$, then $A=B=0$ because $m_1,m_1+m_3+2,1+m_3\not=0$ by Lemma~\ref{remark-lemma}. Then $A+(m_1-m_3)B=(1+m_3)(l_1l_4-m_2m_3)=0$ (see~\cite[Section 7]{pirahmad-maple}), thus $l_1l_4=m_2m_3$. Hence $B=(m_1+1)(m_2+1)+(l_1+1)(l_4+1)=0$ which contradicts to Lemma~\ref{remark-lemma}. 

Therefore, $l_4=m_1$, and from \eqref{eq-expressions} we get $(l_1,m_1,l_2,m_2)=(m_4,l_4,m_3,l_3)$ and we are done.

(b) If the coefficients of (\ref{express-x2-m_1=l_2=0}') and \eqref{express-x2-m_1=l_2=0} are proportional, then $m_4=l_1$ and $l_3=m_2$. Since $m_1=l_2=m_3=l_4=0$, by Lemma~\ref{remark-lemma}(i) we arrive at condition~(i).
\end{proof}

Finally, we summarize the whole argument.

\begin{proof}[Proof of Theorem~\ref{th-classification}]
%
Conditions~(i)--(ii) are sufficient by Example~\ref{ex-ii}.
Let us prove that they are necessary. Assume that the $2\times 2$ net is deformable. Then by Lemma~\ref{l-system} system~\eqref{neweq-main-system} has a continuous family of real solutions $\big(x_1(t),x_2(t),x_3(t),x_4(t)\big)$ with all $x_i(0)=1$.
Recall that $C_i$ denotes the curve given by $P_i(x_i,x_{i+1})=P_{i+1}(x_{i+1},x_{i+2})=0$, where the indices are cyclic modulo~$4$.

Case (ii): \emph{each curve $C_1,\dots,C_4$ contains a conic}. Then the theorem follows from Corollary~\ref{l-all-conics}.



Case (i): \emph{at least one of the curves $C_1,\dots,C_4$ does not contain a conic}. We use a symmetry argument to minimize the number of subcases below: First, assume without loss of generality that $C_1$ does not contain a conic. Second, assume that $m_1$ has the maximal absolute value among all $m_i$ and $l_{i+1}$ such that $C_i$ does not contain a conic (otherwise perform either a suitable cyclic permutation of variables $x_1,\dots,x_4$ or reverse their order).

The projections of $C_1$ and $C_3$ to the $x_1x_3$-plane have a common curve $\big(x_1(t),x_3(t)\big)$. By Lemma~\ref{l-projection}, the projection of $C_1$ is an irreducible curve of degree $3$ or $4$. Thus the curve $C_3$ of degree at most $4$ cannot contain a conic, otherwise both $C_3$ and its projection 
split into a union of conics and/or lines, not curves of higher degree. So, by Lemma~\ref{l-projection} the projections of $C_1$ and $C_3$ are irreducible curves given by
~\eqref{eq-deg4-3} or \eqref{express-x2-m_1=l_2=0}, with $l_1,m_1,l_2,m_2$ replaced by $m_4,l_4,m_3,l_3$ respectively in case of $C_3$. Since both projections are irreducible, 
their reduced equations 
must be proportional. Consider 
$2$ subcases.

Subcase (a): $m_1\ne 0$. In this case, the two projections are given by~\eqref{eq-deg4-3} and~(\ref{eq-deg4-3}'): they cannot be given by~\eqref{eq-deg4-3} and~(\ref{express-x2-m_1=l_2=0}') because the degrees are equal. By Lemma~\ref{l-proportional}(a), the theorem follows.


Subcase (b): $m_1=0$. Recall that $m_1$ was chosen to have the maximal absolute value among all $m_i$ and $l_{i+1}$ such that $C_i$ does not contain a conic. Thus $|l_2|,|m_3|,|l_4|\le |m_1|=0$ and hence $l_2=m_3=l_4=0$.  
By Lemma~\ref{l-proportional}(b), the theorem follows.
\end{proof}


\begin{remark}  In our setup, 
the so-called \emph{Stachel conjecture} holds: the resultant of at least one consecutive pair of homogenized polynomials $P_i(x_i,x_{i+1})$, where $i=1,2,3,4$, is reducible. Indeed, by Theorem~\ref{th-classification}, at least one pair of adjacent faces $f_i$, $f_{i+1}$ has equal opposite ratios with respect to their common edge. By Lemma~\ref{l-conic}, the curve $C_i$, hence its projection to the $x_ix_{i+2}$-plane, contains a conic or a line. Hence the resultant of $P_i$ and $P_{i+1}$ has a factor of degree $2$ or $1$. But the resultant itself, when homogenized, has always degree $3$ or $4$; 
see~\eqref{eq-deg4-3}--\eqref{express-x2-m_1=l_2=0}. Thus 
it is reducible. The example of a $2\times 2$ net with all the faces being parallelograms shows that the homogenization of the polynomials is necessary here.
\end{remark}

\section{Deformable $m\times n$ nets}
\label{sec-deformable-mxn}

In this section, we characterize all deformable nets of arbitrary size. 
We state the classification 
in Section~\ref{ssec-statement-deformable-mxn}, discuss the geometry of deformable nets in Section~\ref{ssec-properties-deformable-mxn}, and give the proof in Section~\ref{ssec-proof-deformable-mxn}. 




\subsection{Statement of the classification}
\label{ssec-statement-deformable-mxn}

We need the following notions. By an $a\times b$ \emph{sub-net} of a given $m\times n$ net with the points $P_{ij}$, where $0\leq i\leq m$ and $0\leq j\leq n$, we mean an $a\times b$ net with the points $P_{ij}$, where $p\leq i\leq p+a$ and $q\leq j\leq q+b$, for some integers $0\leq p\leq n-a$ and $0\leq q\leq m-b$. (The collection of $(a+1)(b+1)$ points indexed in this way is still viewed as an $a\times b$ net.) 


\begin{theorem}
\label{th-mxn}
An $m\times n$ net 
is deformable if and only if 
one of the following conditions~holds:
\begin{itemize}
\item[\textup{(i)}] in each $1\times 2$ sub-net or in each $2\times 1$ sub-net, the two faces are affine symmetric with respect to their common edge;
\item[\textup{(ii)}] each pair of faces with a common edge has equal opposite ratios with respect to that edge.
\end{itemize}
%
\end{theorem}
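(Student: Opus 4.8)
The plan is to prove both implications by reducing to the already-classified $2\times 2$ case (Theorem~\ref{th-classification}) and then upgrading the local information to the whole net. For the \emph{necessity} direction I would restrict the given family of area-preserving Combescure transformations to each $2\times 2$ sub-net; a sub-net that actually moves is itself deformable, hence of class~(i) or~(ii) by Theorem~\ref{th-classification} (the finitely many sub-nets that stay rigid throughout the family are handled by continuity). If every pair of adjacent faces has equal opposite ratios, then condition~(ii) holds and we are done. Otherwise I fix an interior edge whose two faces have \emph{unequal} opposite ratios. Since affine symmetric faces always have equal opposite ratios, every $2\times 2$ sub-net having this edge among its four central edges must be of class~(i), and its affine-symmetric pairing must be the one \emph{transversal} to the bad edge. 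This pins down a direction (``all $1\times 2$'' or ``all $2\times 1$'') at that location, and the remaining task is to show this direction is the same throughout, i.e.\ that condition~(i) holds globally.

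For the \emph{sufficiency} direction I would treat the two classes separately by explicit construction. For class~(ii), equal opposite ratios on every edge make each $2\times 2$ sub-net a K\oe nigs net (Proposition~\ref{p-Koenigs}), so the whole net is K\oe nigs and admits a Christoffel dual; the local equal-area duals provided by Proposition~\ref{p-class-ii} and Proposition~\ref{p-dual-with-equal-areas} agree on shared edges, so since the grid is simply connected they glue into a single global Christoffel dual $P_{ij}^*$ with the same areas of corresponding faces. Then the family $P_{ij}(t)=P_{ij}\cosh t+P_{ij}^*\sinh t$ of Proposition~\ref{ex-ii-geo} is a global area-preserving Combescure transformation, because the area computation there is done face by face. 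For class~(i), say all $2\times 1$ sub-nets are affine symmetric, I would generalize Example~\ref{ex-i}: deform one face by the one-parameter area-preserving family keeping its edge directions fixed, and propagate this deformation along each row through the affine symmetries of consecutive faces, which preserve areas and edge directions and fix the common edges. The substance is to verify that the propagation is consistent across rows and closes up into a genuine net; here Proposition~\ref{th-old} and Proposition~\ref{lemma-three-pic} supply exactly the parallelism and concurrence that guarantee the deformed faces keep fitting together.

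The key outstanding step for necessity is the \emph{global propagation of the affine-symmetric direction}, and this is where I expect the main obstacle. The $2\times 2$ classification by itself does not forbid a ``mixture'' in which some sub-nets are affine symmetric only in the $1\times 2$ direction and others only in the $2\times 1$ direction while all remain of class~(i) or~(ii); such a net would satisfy neither global condition. What rules this out is the existence of a \emph{single} deformation parameter shared by all faces: by Lemma~\ref{l-system} the deformation is encoded by scaling functions along the edges, and the scaling of an edge common to two overlapping $2\times 2$ sub-nets must be one and the same function of $t$. The explicit solution families of Example~\ref{ex-ii} have distinguishable structure --- in a class~(i) sub-net a distinguished edge scales linearly, $x(t)=1+t$, whereas a class~(ii) sub-net never admits such a scaling --- so once a bad edge fixes the direction in one sub-net, the shared-edge constraint forces the same type in every adjacent sub-net, and hence, by connectedness of the net, everywhere.

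Finally I would assemble the pieces. Necessity yields condition~(i) or~(ii) after the propagation; sufficiency yields deformability from either condition; and the reformulation of~(i) and~(ii) in terms of the simple ratios $l_i,m_i$ in Lemma~\ref{remark-lemma} lets me transfer the $2\times 2$ identities to the $m\times n$ setting edge by edge. The boundary-data description associated with the statement then falls out of the sufficiency constructions: in class~(ii) the free data are the net together with its equal-area Christoffel dual, and in class~(i) they are one transversal polyline plus the prescribed areas, from which the affine symmetries reconstruct the remaining faces.
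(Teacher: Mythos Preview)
Your overall architecture is right---reduce to the $2\times 2$ classification and then propagate---but the propagation step in your \emph{necessity} argument has a genuine gap. You want to distinguish the class-(i) direction from class-(ii) by the analytic form of the edge-scaling functions (``a distinguished edge scales linearly, $x(t)=1+t$, whereas a class~(ii) sub-net never admits such a scaling''). This does not work: the parameter $t$ is not canonical, so ``linear in $t$'' is not an invariant of the deformation; a $2\times 2$ sub-net can belong to \emph{both} classes simultaneously, in which case it carries both families; and even when a sub-net is purely class~(i), the explicit families in Example~\ref{ex-ii} are just \emph{one} choice, not the only possible deformation. So the shared-edge constraint does not, by itself, force neighboring sub-nets to have the same affine-symmetric direction.

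The paper avoids this analytic trap entirely. It encodes the opposite ratios of all faces into two $m\times n$ tables $H$ and $V$ (which are invariants of the net, independent of any deformation), restates conditions~(i) and~(ii) as ``both tables have equal rows or both have equal columns'' versus ``$H$ has equal rows and $V$ has equal columns'' (Lemma~\ref{l-tables-opposite-ratios}), and then proves a purely combinatorial fact (Lemma~\ref{lemma-table-mxn}): if every $2\times 2$ block of a table has equal rows or equal columns, then the whole table does. This induction on $m+n$ is exactly the global propagation you are after, but carried out on static invariants rather than on the moving family.

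Your \emph{sufficiency} plan is essentially correct and, for class~(ii), matches the paper (Proposition~\ref{th-christoffel}). The paper, however, also gives a uniform argument that does not split into classes: Lemma~\ref{lemma-def-2x2} builds the global deformation layer by layer from the deformations of the $2\times 2$ sub-nets, using the uniqueness Lemmas~\ref{lemma-uniq-def-1x1}--\ref{lemma-uniq-def-2x2}. The subtle point there---and the paper flags it explicitly---is that the propagation needs, for every edge of every $2\times 2$ sub-net, deformations that both increase and decrease its length; this is the ``moreover'' part of Example~\ref{ex-ii} and is only available \emph{after} Theorem~\ref{th-classification}. Your class-(i) construction via affine symmetries would need a comparable two-sidedness check at each interface to guarantee the rows glue; that is doable, but you have not yet supplied it.
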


Thus all the $2\times 2$ sub-nets of a deformable $m\times n$ net belong to the same class, (i) or (ii). 

Theorem~\ref{th-mxn} allows us to check if a given net is deformable. Now we give a parametrization of deformable nets, allowing us to construct all deformable nets close enough to a square net. 

By an \emph{L-shaped net of size $m\times n$} we mean an indexed collection of $2m+2n$ points $P_{ij}$, where the indices satisfy the inequalities $0\le i\le m$, $0\le j\le n$, $\min\{i,j\}\le 1$, such that $P_{ij},P_{i+1,j},P_{i,j+1},P_{i+1,j+1}$ are vertices of a convex quadrilateral whenever $i=0$, $0\le j< n$ or $j=0$, $0\le i< m$. 
See Figure~\ref{L-shape}.
\emph{Edges}, \emph{faces}, and \emph{sub-nets} are defined analogously to the ones of an $m\times n$ net. An \emph{L-shaped square net} is an L-shaped net such that the faces are coplanar non-coincident squares.
\begin{figure}[htbp]
    \centering
    \includegraphics[scale=0.025]
{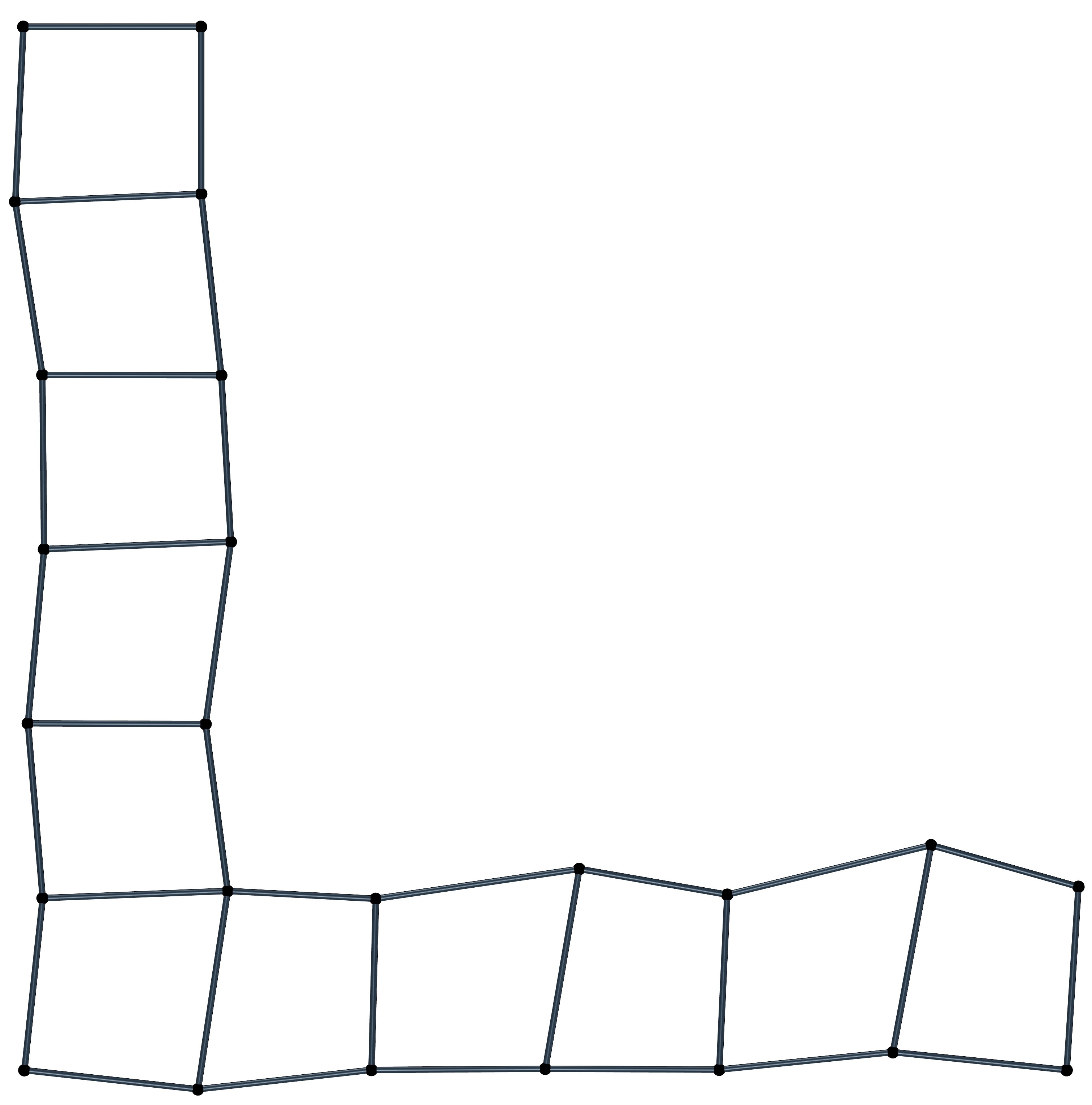}\qquad\qquad\qquad\qquad
\includegraphics[scale=0.025]
{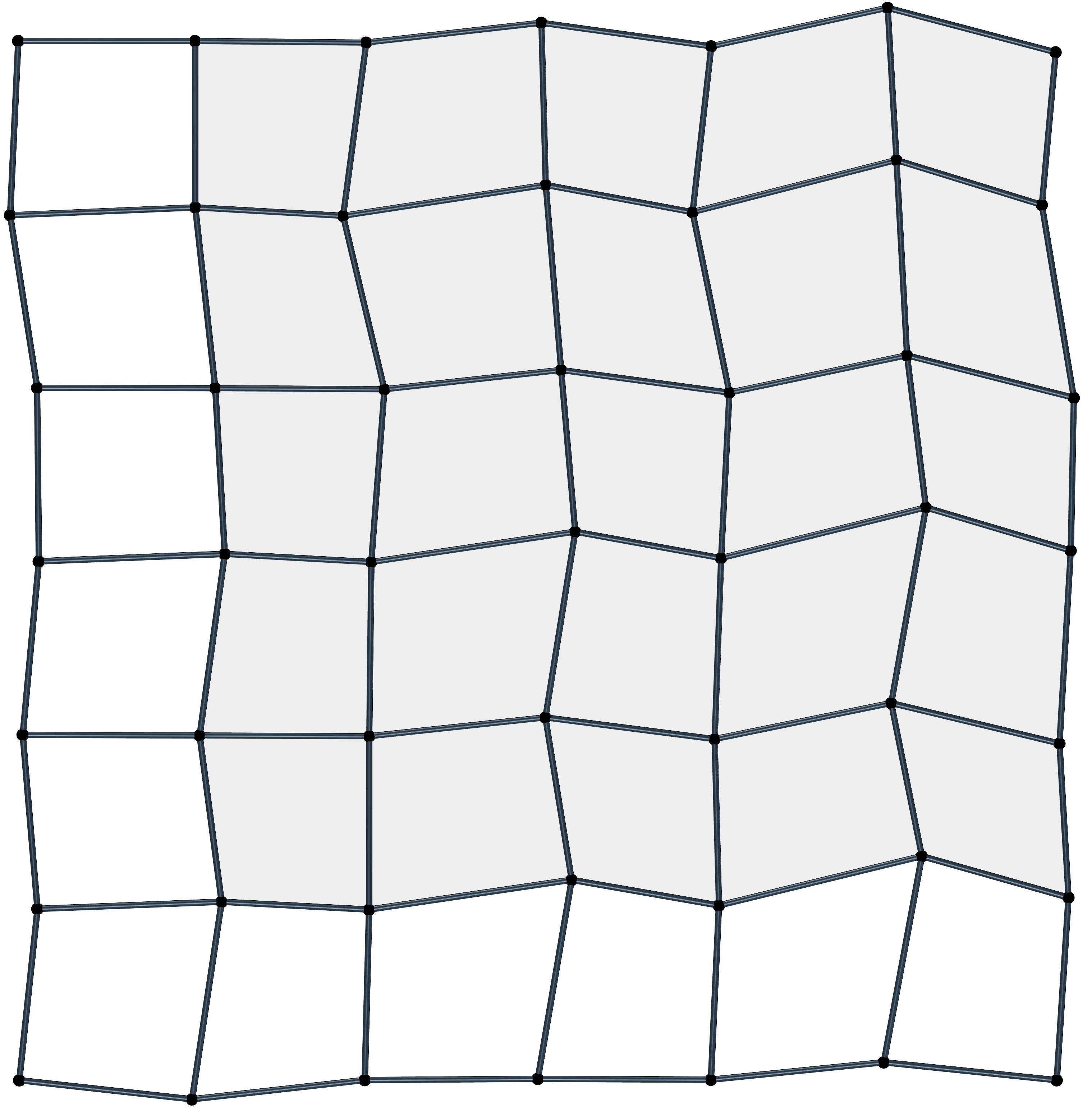}
    \caption{Left: An L-shaped net of size $6\times6$. Right: The unique deformable $6\times6$ net containing the L-shaped net. See Corollary~\ref{cor-L}.}
    \label{L-shape}
\end{figure}


\begin{corollary}  \label{cor-L}
    If an L-shaped net of size $m\times n$ is sufficiently close to the L-shaped square net and satisfies one of 
    conditions (i) or (ii) in Theorem~\ref{th-mxn}, then it is contained in exactly one deformable  $m\times n$ net. 
    Moreover, the resulting $m\times n$ net continuously depends on the given L-shaped net.
\end{corollary}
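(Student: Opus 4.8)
The plan is to reconstruct the whole net face by face from the L-shaped data, using the local characterization of Theorem~\ref{th-mxn}, and to show that near the square net this reconstruction is well-defined, unique, convex, and continuous. The L-shaped net supplies all vertices $P_{ij}$ with $i\le 1$ or $j\le 1$, so the unknowns are exactly the $P_{ij}$ with $i,j\ge 2$, one per interior face $P_{i-1,j-1}P_{i,j-1}P_{i,j}P_{i-1,j}$; thus the number of unknown vertices matches the number of local constraints we will impose.

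First I would treat class~(ii). Reconstruct the unknowns in order of increasing $j$, and within a vertex-row in order of increasing $i$. To find $P_{ij}$ (with $i,j\ge 2$), look at the face $Q_{i-1,j-1}=P_{i-1,j-1}P_{i,j-1}P_{i,j}P_{i-1,j}$: its three consecutive vertices $P_{i,j-1}$, $P_{i-1,j-1}$, $P_{i-1,j}$ are already known, and its two opposite ratios with respect to the edges meeting at $P_{i-1,j-1}$ are forced to equal those of the already-built neighbors $Q_{i-1,j-2}$ (below) and $Q_{i-2,j-1}$ (to the left). By Corollary~\ref{cor-existence-uniqueness-opposite-ratios} these data determine a unique convex quadrilateral, hence determine $P_{ij}$. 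Each interior edge is the bottom or left edge of exactly one reconstructed face, while the remaining edge-adjacencies lie inside the L-shape and already satisfy condition~(ii) by hypothesis; hence every pair of faces sharing an edge has equal opposite ratios, so the net lies in class~(ii) and is deformable by Theorem~\ref{th-mxn}.

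Next I would treat class~(i), assuming without loss of generality the horizontal sub-case (each $1\times 2$ sub-net affine symmetric); the vertical sub-case is symmetric, reconstructing column by column. Again reconstruct vertex-row by vertex-row, propagating within each face-row from left to right via the affine symmetry of consecutive faces. Concretely, once $Q_{i-1,j}$ and the shared edge $P_{i,j}P_{i,j+1}$ are known, the affine map fixing that edge pointwise and sending the known vertex $P_{i-1,j}$ to the known vertex $P_{i+1,j}$ (vertex-row $j$ being already built) is uniquely determined, since a plane affine map fixing a line pointwise has two parameters, pinned down by the image of one off-line point; this map carries $P_{i-1,j+1}$ to the sought $P_{i+1,j+1}$. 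By Proposition~\ref{th-old} this is exactly the required affine symmetry, so all $1\times 2$ sub-nets are affine symmetric and the net lies in class~(i).

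Finally, existence, uniqueness, convexity, and continuity. Each reconstruction step is the evaluation of a continuous (indeed rational) map that is nondegenerate at the square net, where all prescribed opposite ratios equal $1$ and all affine maps are the identity, so that every face is a square. By continuity, if the L-shaped data is close enough to the square one, every step stays nondegenerate, every reconstructed face stays convex, and the composed reconstruction depends continuously on the input, yielding a deformable net close to the square net. Uniqueness follows because, by Theorem~\ref{th-mxn}, any deformable net extending the L-data lies in class~(i) or~(ii), and within each class the recursion shows every interior vertex is forced; when the L-data satisfies both conditions the two recursions agree, since they coincide for the square net and hence, by continuity, nearby. The main obstacle is precisely the propagation of this closeness: one must carry an induction through all the recursion steps showing that proximity of the L-data to the square net keeps each intermediate face inside the regime where Corollary~\ref{cor-existence-uniqueness-opposite-ratios} (respectively the affine-symmetry construction) applies and returns a convex face; the secondary, bookkeeping point is that the locally imposed conditions cover every interior edge exactly once, so that the global condition of Theorem~\ref{th-mxn} indeed holds.
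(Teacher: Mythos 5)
Your construction, induction order, and use of closeness to the square net essentially reproduce the paper's own proof: the paper also reconstructs the missing vertices face by face, invoking Corollary~\ref{cor-existence-uniqueness-opposite-ratios} at each step (three known vertices plus two prescribed opposite ratios determine the fourth vertex), and uses proximity to the L-shaped square net only to guarantee non-collinearity, convexity, and continuous dependence. The one structural difference is that the paper first encodes \emph{both} conditions of Theorem~\ref{th-mxn} as propagation rules for the two opposite-ratio tables $H$ and $V$ of Lemma~\ref{l-tables-opposite-ratios}, so that a single opposite-ratio reconstruction covers class (i) as well as class (ii); your separate affine-map propagation for class (i) is a correct, slightly more geometric substitute for that.

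There is, however, a genuine gap in your uniqueness argument. When the L-shaped data satisfies more than one propagation rule at once --- both sub-cases of (i), or a sub-case of (i) together with (ii), which does happen arbitrarily close to the square net --- you must show the corresponding recursions return the \emph{same} net, and your justification, ``they coincide for the square net and hence, by continuity, nearby,'' proves nothing: two continuous maps that agree at a single point of their common domain need not agree on a neighborhood of it (compare $x\mapsto 0$ and $x\mapsto x$ at $x=0$). This is precisely where the paper's table bookkeeping does real work: whichever condition a deformable extension satisfies, all entries of its tables $H$ and $V$, hence all opposite ratios of its faces, are determined by the first row and first column of those tables, and when several conditions hold simultaneously the consistency constraints they impose on the L-shaped data force the differently propagated tables to coincide; Corollary~\ref{cor-existence-uniqueness-opposite-ratios} then gives equality of the nets vertex by vertex. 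A direct repair inside your framework is also available: if the L-data satisfies (ii) in addition to (i), check that your class-(i) output satisfies condition (ii) globally --- its $V$-table is constant along each row by affine symmetry and constant along the first column by the hypothesis on the L-data, hence globally constant --- and then your class-(ii) forcing argument identifies it with the class-(ii) output. Without some argument of this kind, you have only proved ``at most one extension per applicable rule,'' not the claimed ``exactly one deformable net.''
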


The reader interested in the proofs of the theorem and the corollary can proceed to Section~\ref{ssec-proof-deformable-mxn}, and now we discuss geometric properties of the resulting classes (i) and (ii) of deformable $m\times n$ nets. 

\subsection{Geometric properties}
\label{ssec-properties-deformable-mxn}
%

\subsubsection{Class (i)}

Deformable nets from class (i) are related to cone nets, which are studied 
in~\cite{kilian2023smooth}. 

An $1 \times n$ net with points $P_{ij}$, where $0 \leq i \leq 1$, $0 \leq j \leq n$, is called a \emph{cone strip} if all the lines $P_{0j}P_{1j}$ for $0 \leq j \leq n$ are either concurrent or parallel. See Figure~\ref{fig:cone-cylind-strip}(left). 
An $m \times n$ net is called a \emph{cone net} if each $1\times n$ or each $m\times 1$ sub-net is a cone strip. See Figure~\ref{fig:cone-cylinder net}. Applying Proposition~\ref{th-old} repeatedly (see the red lines in Figure~\ref{figure:properties}), we see that each net from class (i) is a cone net.

The other condition depicted in Figure~\ref{figure:properties} leads us to the following notions. By \emph{a cone-cylinder strip} we mean a cone strip such that $P_{0,j}P_{0,j+1}\parallel P_{1,j}P_{1,j+1}$ for each $0\leq j\leq n-1$. See Figure~\ref{fig:cone-cylind-strip}(middle).
A \emph{doubled cone-cylinder strip} is a cone strip such that 
$P_{0,j}P_{0,j+2}\parallel P_{1,j}P_{1,j+2}$ 
and $P_{0,j},P_{0,j+2},P_{1,j},P_{1,j+2}$ are non-collinear
for each $0\leq j\leq n-2$. See Figure~\ref{fig:cone-cylind-strip}(right).
We see that a doubled cone-cylinder strip consists of two interleaved cone-cylinder strips forming a cone strip together. A \emph{cone-cylinder net} and a \emph{doubled cone-cylinder net} are now defined analogously to a cone net. See Figure~\ref{fig:cone-cylinder net}. The points $P_{00},P_{0n},P_{m0},P_{mn}$ are called the \emph{corners} of an $m\times n$ net. 


\begin{figure}[htbp]
    \centering
    \includegraphics[scale=0.169]{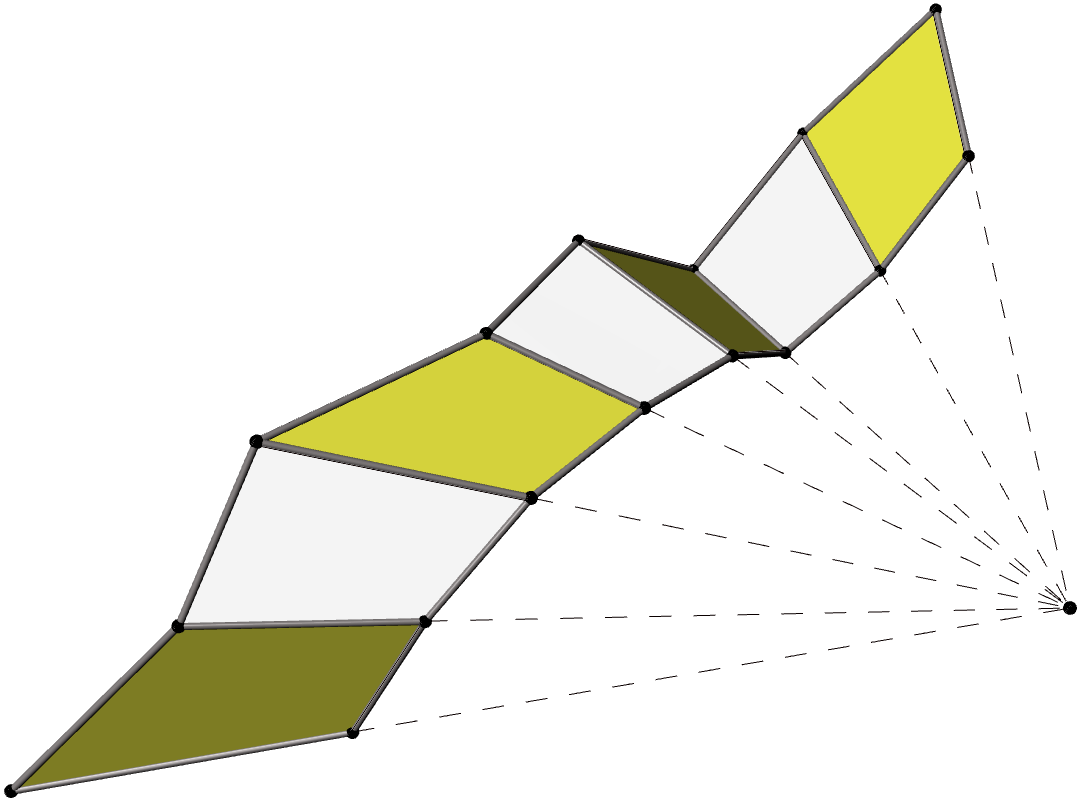}\quad\includegraphics[scale=0.135]{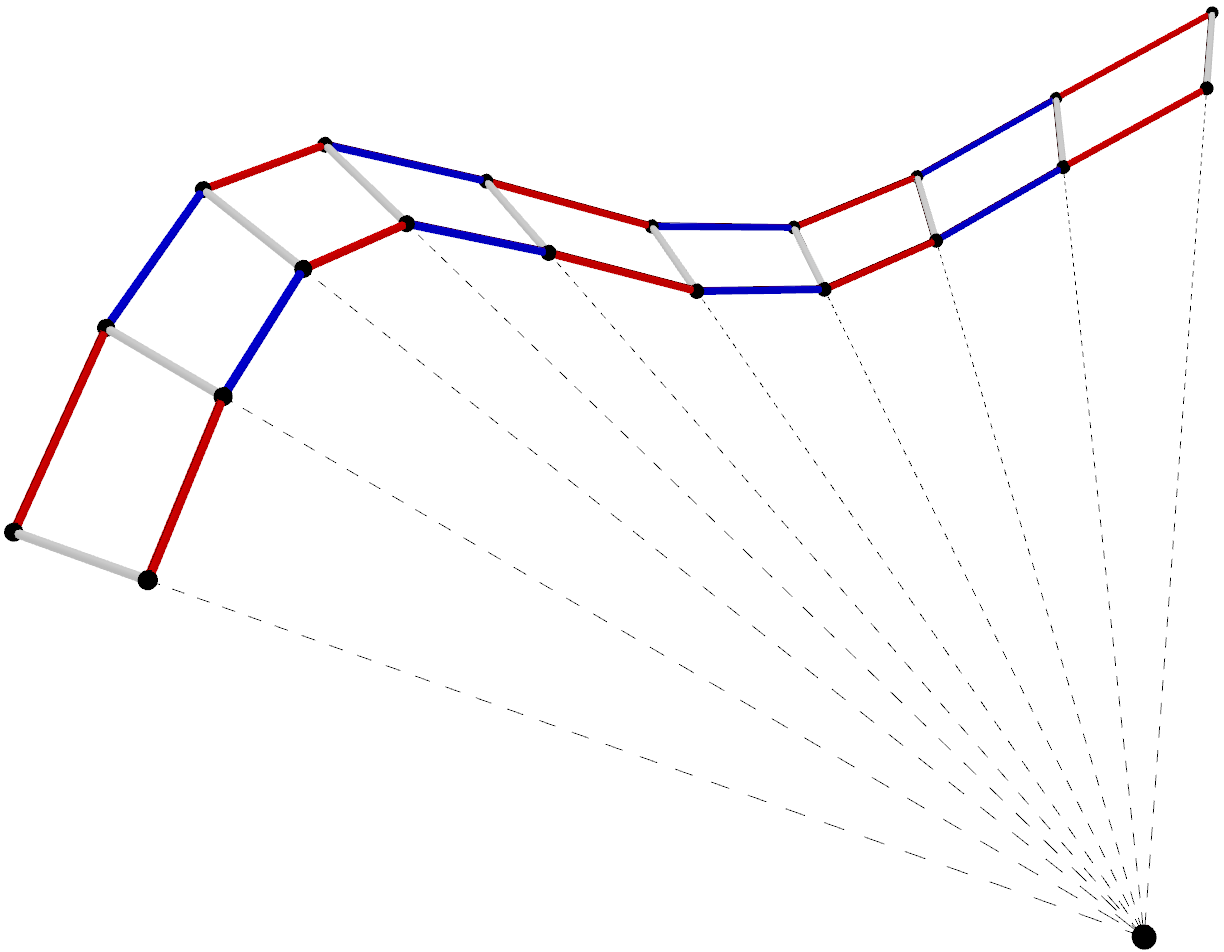}
    \quad\includegraphics[scale=0.154]{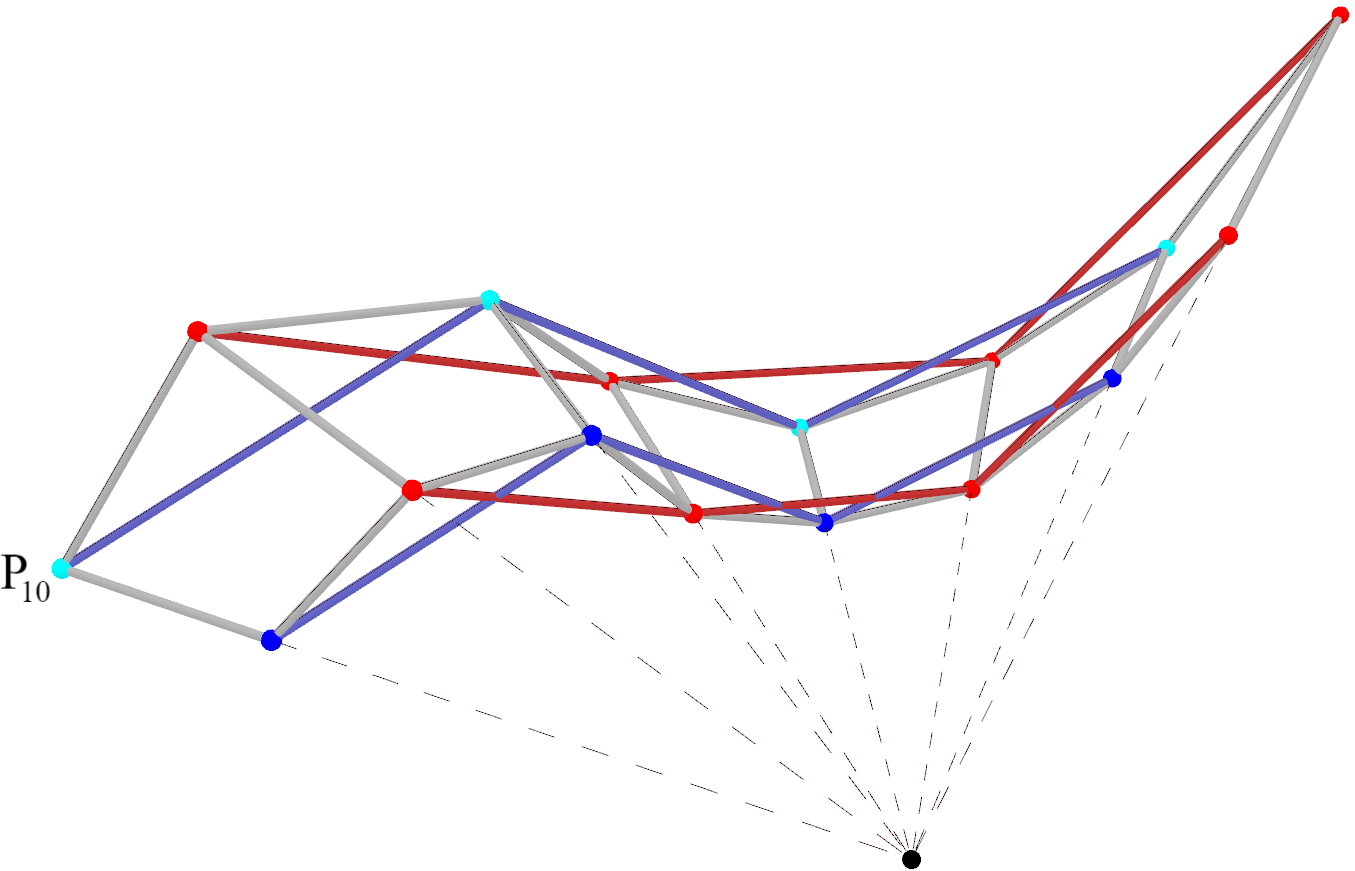}
    \caption{Left: a cone strip. Middle: a cone-cylinder strip. Right: a doubled cone-cylinder strip. Corresponding blue (red) segments are parallel, and the dashed lines are concurrent. The doubled cone-cylinder strip consists of two interleaved cone-cylinder strips shown in red and shades of blue respectively. Given the red strip, one can construct the blue one so that they form a doubled cone-cylinder strip together: the dark blue points can be chosen (almost) arbitrarily, the point $P_{10}$ can be freely chosen on the dashed line, and the remaining light blue points are determined by the parallelism of the resulting blue segments. The construction can then be propagated to the next strips in a net. 
    }
    \label{fig:cone-cylind-strip}
\end{figure}

\begin{figure}[htbp]
    \centering
    \includegraphics[scale=0.06]{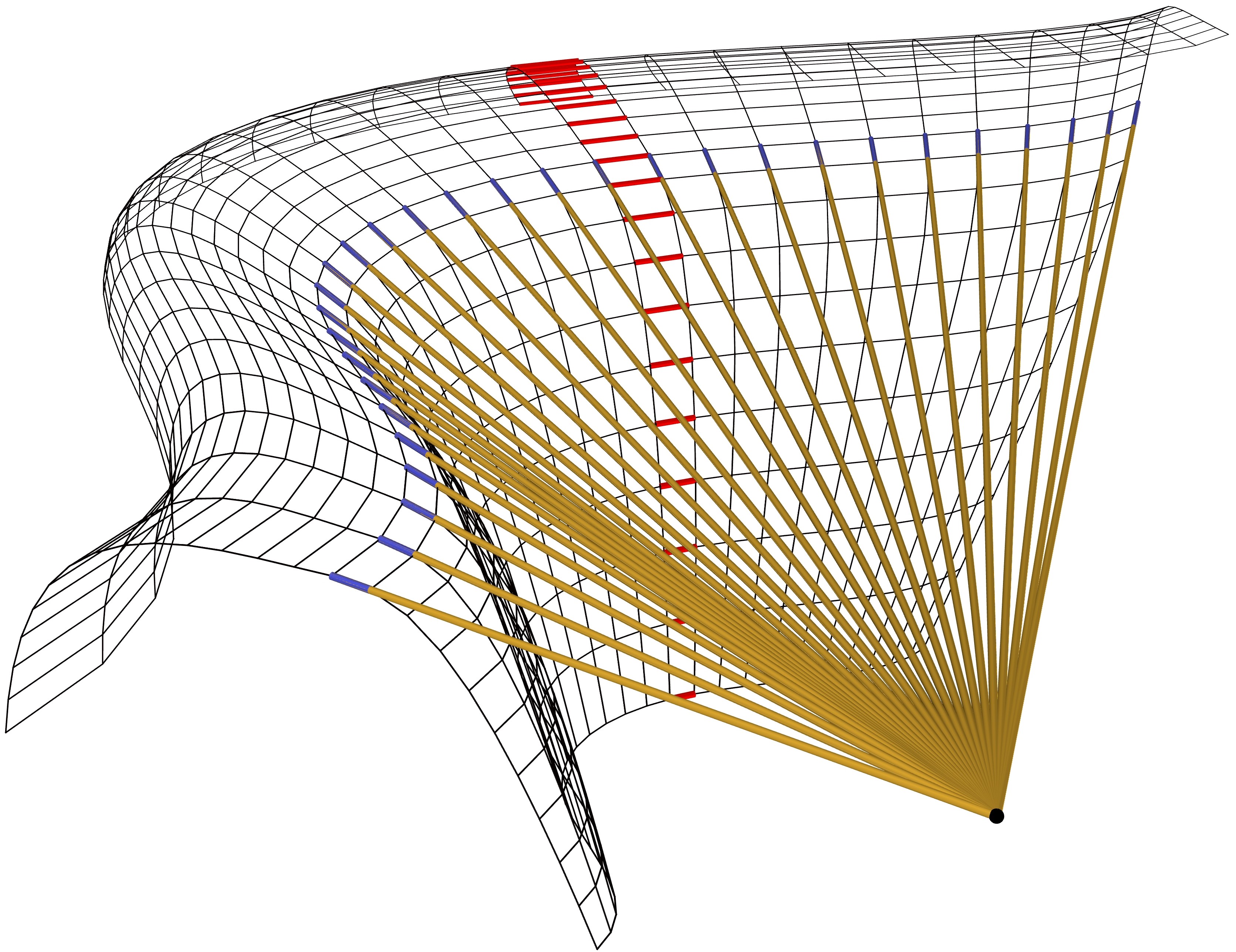}\quad\quad\quad\includegraphics[scale=0.063]{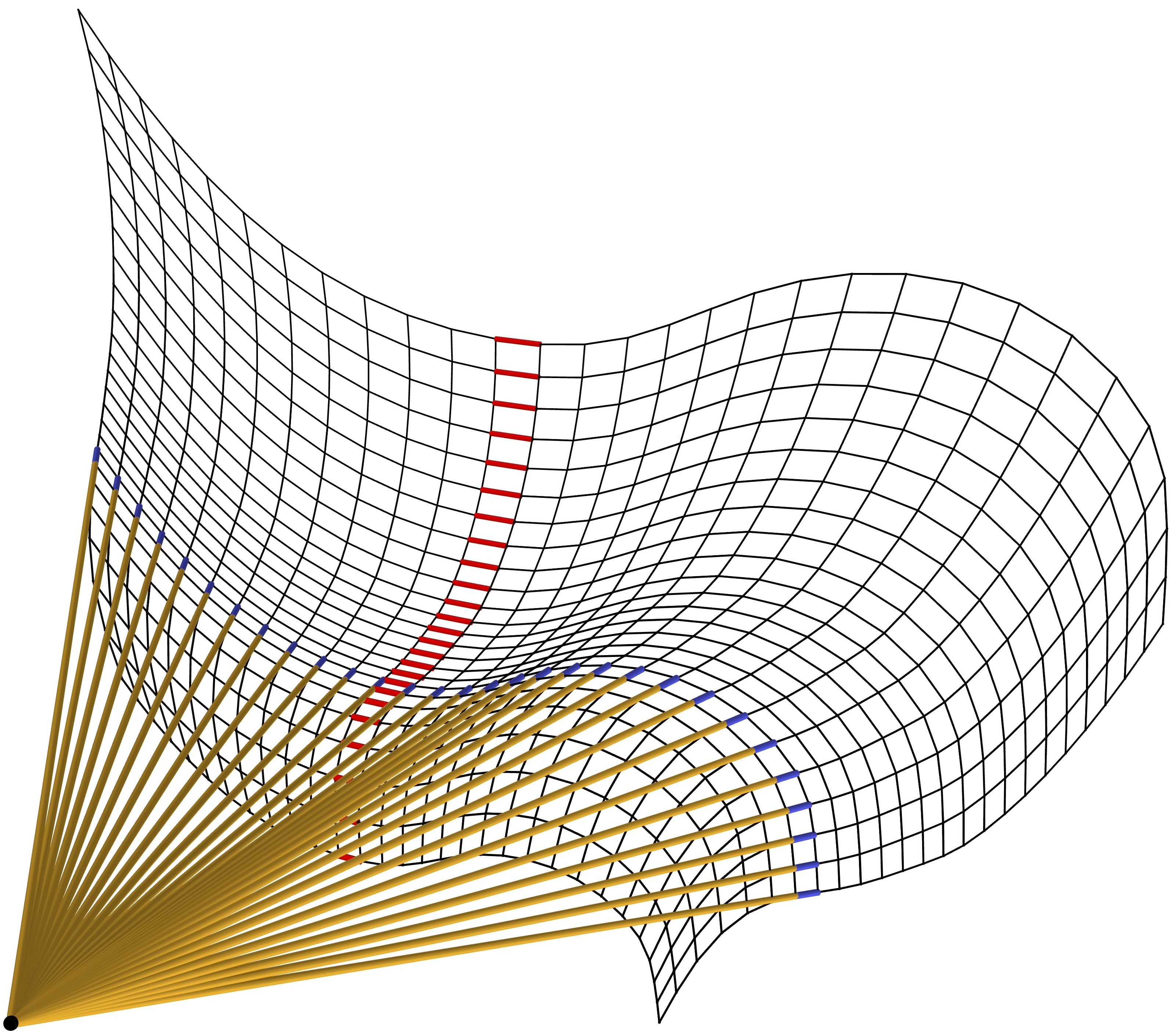}

    \caption{Cone-cylinder nets. Left: the deformable net from Figure~\ref{fig:type i}. Right: another example. The yellow lines are concurrent and the red lines are parallel.  }
    \label{fig:cone-cylinder net}
\end{figure}

As an immediate consequence of Proposition~\ref{th-old}, we get a geometric characterization of class (i).

\begin{proposition}
    \label{th-cyl-cone-str}
    An $m\times n$ net 
    such that the corners of each $2\times 1$ and $1\times 2$ sub-net are non-collinear
    satisfies condition (i) of Theorem~\ref{th-mxn}
    if and only if it is a doubled cone-cylinder net.
\end{proposition}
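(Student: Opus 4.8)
The plan is to reduce the whole statement to a single application of Proposition~\ref{th-old} on each $1\times 2$ (respectively $2\times 1$) sub-net, and then to upgrade the resulting \emph{local} concurrency condition to the \emph{global} cone-strip condition. By the symmetry between the two indices, it suffices to treat the alternative in which condition (i) holds for every $1\times 2$ sub-net and to show that this is equivalent to every $1\times n$ sub-net being a doubled cone-cylinder strip; the $2\times 1$ / $m\times 1$ case is identical after swapping the roles of $i$ and $j$, and matching these two alternatives gives the full equivalence.

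First I would fix a $1\times 2$ sub-net with rows $i,i+1$ and columns $j,j+1,j+2$, whose two faces share the common edge $P_{i,j+1}P_{i+1,j+1}$. Writing the faces as $ABCD$ and $ABC'D'$ with $A=P_{i,j+1}$, $B=P_{i+1,j+1}$, $C=P_{i+1,j}$, $D=P_{i,j}$, $C'=P_{i+1,j+2}$, $D'=P_{i,j+2}$, the four points $C,D,C',D'$ are exactly the corners of the sub-net, hence non-collinear by hypothesis, so Proposition~\ref{th-old} applies. It says the two faces are affine symmetric with respect to the common edge if and only if (1) $P_{i,j}P_{i,j+2}\parallel P_{i+1,j}P_{i+1,j+2}$ and (2) the three lines $\ell_j,\ell_{j+1},\ell_{j+2}$ are concurrent or parallel, where $\ell_k:=P_{i,k}P_{i+1,k}$. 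Condition (1) is precisely the extra parallelism requirement in the definition of a doubled cone-cylinder strip, so the proposition reduces to showing that ``$\ell_j,\ell_{j+1},\ell_{j+2}$ concurrent or parallel for every admissible $j$'' is equivalent to the cone-strip condition that $\ell_0,\dots,\ell_n$ are all concurrent or all parallel.

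The converse half is immediate, since the cone-strip condition trivially implies the three-line condition for each consecutive triple. The forward implication is the main obstacle: I must propagate concurrency or parallelism along the whole strip. The key observation is that consecutive lines $\ell_k,\ell_{k+1}$ are always distinct, for otherwise the four vertices $P_{i,k},P_{i+1,k},P_{i,k+1},P_{i+1,k+1}$ of a single face would be collinear, contradicting its convexity. Starting from $\ell_0,\ell_1$: if they meet at a point $X$, then each successive triple $\ell_k,\ell_{k+1},\ell_{k+2}$ contains the two distinct lines $\ell_k,\ell_{k+1}$ through $X$, which are not parallel, forcing concurrency at $X$ and hence $X\in\ell_{k+2}$; an induction shows every $\ell_k$ passes through $X$. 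If instead $\ell_0\parallel\ell_1$, each triple contains two distinct parallel lines, which cannot be concurrent, so the triple is all-parallel, and induction makes every $\ell_k$ parallel. Either way the $1\times n$ sub-net is a cone strip.

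Finally I would assemble the pieces: combining the cone-strip condition with the parallelism (1) and the non-collinearity hypothesis (which is exactly the non-collinearity clause in the definition of a doubled cone-cylinder strip) shows each $1\times n$ sub-net is a doubled cone-cylinder strip; conversely, each such strip furnishes for every $1\times 2$ sub-net both (1) and the three-line condition, and hence affine-symmetric faces via Proposition~\ref{th-old}. Matching the two alternatives of condition (i) of Theorem~\ref{th-mxn} with the two alternatives in the definition of a doubled cone-cylinder net completes the proof.
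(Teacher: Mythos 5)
Your proof is correct and follows essentially the same route as the paper, which presents this proposition as an immediate consequence of Proposition~\ref{th-old} applied to each pair of adjacent faces (``applying Proposition~\ref{th-old} repeatedly''). The only step the paper leaves implicit --- upgrading the triple-wise concurrency/parallelism of the lines $\ell_j$ to the global cone-strip condition, which hinges on consecutive lines being distinct by face convexity --- is exactly the induction you spell out, so your write-up is a faithful, more detailed version of the intended argument.
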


We see that any $m\times n$ net from class (i) consists of two cone-cylinder nets forming a cone net together. Given one of the two cone-cylinder nets,
one can construct the other one so that they comprise a net from class (i) together. 
See Figure~\ref{fig:cone-cylind-strip}(right).

This suggests taking a closer look at the cone-cylinder nets. They have applications themselves~\cite{glymphetal}.

\begin{proposition} \label{p-cone-cylinder-nets}
An $m\times n$ net with the points $P_{ij}$, where $0 \leq i \leq m$, $0 \leq j \leq n$, is a cone-cylinder net if and only if up to interchanging the indices $i$ and $j$ we have
\begin{equation}
    \label{eq-p-cone-cylinder-nets}
    P_{ij}=a_i + \sigma_i b_j,
    \qquad
    0 \leq i \leq m\text{ and } 0 \leq j \leq n,
\end{equation}
for some $a_0,\dots,a_m,b_0,\dots,b_n\in\mathbb{R}^d$ and $\sigma_0,\dots,\sigma_m\in\mathbb{R}$.
\end{proposition}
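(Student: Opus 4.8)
The plan is to prove both implications directly, using the base row $P_{0,j}$ as the curve $b_j$ and exhibiting each subsequent row as a homothety (or translation) of its predecessor. For the `if' part I would simply substitute \eqref{eq-p-cone-cylinder-nets} into the two defining conditions of a cone-cylinder strip, applied to a pair of consecutive rows $i,i+1$. The vertical edge vectors are $P_{i+1,j}-P_{i,j}=(a_{i+1}-a_i)+(\sigma_{i+1}-\sigma_i)b_j$. If $\sigma_{i+1}=\sigma_i$ these coincide for all $j$, so the lines $P_{i,j}P_{i+1,j}$ are parallel; if $\sigma_{i+1}\ne\sigma_i$, then the point $\frac{\sigma_i a_{i+1}-\sigma_{i+1}a_i}{\sigma_i-\sigma_{i+1}}$, obtained by choosing the parameter that annihilates the $b_j$-term, lies on every line $P_{i,j}P_{i+1,j}$, so they are concurrent. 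Thus each such strip is a cone strip. The cylinder condition is immediate since $P_{i,j+1}-P_{i,j}=\sigma_i(b_{j+1}-b_j)$ and $P_{i+1,j+1}-P_{i+1,j}=\sigma_{i+1}(b_{j+1}-b_j)$ are both multiples of $b_{j+1}-b_j$. Hence every $1\times n$ sub-net is a cone-cylinder strip and the net is a cone-cylinder net.

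For the `only if' part I would assume, after possibly interchanging the indices $i$ and $j$, that each $1\times n$ sub-net, i.e., each pair of consecutive rows $i,i+1$, is a cone-cylinder strip. The cylinder condition states $P_{i,j}P_{i,j+1}\parallel P_{i+1,j}P_{i+1,j+1}$, so each face $P_{i,j}P_{i+1,j}P_{i+1,j+1}P_{i,j+1}$ is a trapezoid; the cone condition states that for each fixed $i$ the lines $P_{i,j}P_{i+1,j}$ either pass through a common point $Z_i$ or are all parallel. Setting $b_j:=P_{0,j}$, $\sigma_0:=1$, $a_0:=0$, I would prove by induction on $i$ that $P_{i,j}=a_i+\sigma_i b_j$.

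The crux is the inductive step, which must combine the two conditions. Suppose the lines $P_{i,j}P_{i+1,j}$ concur at $Z_i$, and for each $j$ write $P_{i+1,j}-Z_i=\rho_{i,j}(P_{i,j}-Z_i)$. The trapezoidal face $j$ has its two legs in columns $j$ and $j+1$ meeting at the apex $Z_i$, so the similar triangles $Z_iP_{i,j}P_{i,j+1}$ and $Z_iP_{i+1,j}P_{i+1,j+1}$ (with parallel bases) force $\rho_{i,j}=\rho_{i,j+1}$; hence $\rho_{i,j}$ is independent of $j$, say $\rho_{i,j}\equiv\rho_i$. Then $P_{i+1,j}=(1-\rho_i)Z_i+\rho_i P_{i,j}$ for all $j$, and together with the inductive hypothesis $P_{i,j}=a_i+\sigma_i b_j$ this yields $P_{i+1,j}=a_{i+1}+\sigma_{i+1}b_j$ with $\sigma_{i+1}:=\rho_i\sigma_i$ and $a_{i+1}:=(1-\rho_i)Z_i+\rho_i a_i$. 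If instead the lines $P_{i,j}P_{i+1,j}$ are parallel, the same trapezoid analysis shows $P_{i+1,j}-P_{i,j}$ is one and the same vector for all $j$, giving $\sigma_{i+1}=\sigma_i$ and a pure translation of $a_i$. Either way the induction closes and \eqref{eq-p-cone-cylinder-nets} follows.

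The one point needing care, and which I expect to be the main (if routine) obstacle, is the non-degeneracy that makes $\rho_i$ well defined: I must know that $Z_i$ does not lie on any line $P_{i,j}P_{i,j+1}$. This follows from convexity of the faces: the lines $P_{i,j}P_{i+1,j}$ and $P_{i,j}P_{i,j+1}$ are distinct (else $P_{i,j},P_{i,j+1},P_{i+1,j}$ would be collinear), so they meet only at $P_{i,j}$; if $Z_i$ lay on $P_{i,j}P_{i,j+1}$ it would equal $P_{i,j}$, and since $Z_i$ also lies on $P_{i,j+1}P_{i+1,j+1}$ this would make $P_{i,j},P_{i,j+1},P_{i+1,j+1}$ collinear, contradicting convexity of face $j$. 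This bookkeeping, together with keeping the concurrent and parallel cases running in parallel, is the only delicate part; the algebraic identities in the `if' direction are immediate.
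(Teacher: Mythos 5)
Your proof is correct and follows essentially the same route as the paper's: the `if' direction by exhibiting the common apex $\frac{\sigma_i a_{i+1}-\sigma_{i+1}a_i}{\sigma_i-\sigma_{i+1}}$ (or the common edge vector in the parallel case), and the `only if' direction by showing that consecutive rows are related by a central similarity or a translation and composing these maps, with $b_j:=P_{0j}$. The only difference is level of detail: your intercept-theorem argument for $\rho_{i,j}=\rho_{i,j+1}$ and the convexity-based non-degeneracy check spell out what the paper asserts in a single sentence (``the broken line $P_{i+1,0}\dots P_{i+1,n}$ is obtained from $P_{i,0}\dots P_{i,n}$ by a central similarity or a translation'').
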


\begin{proof} \emph{The `only if' part.} Take a cone-cylinder net. Up to interchanging the indices $i$ and $j$, we may assume that all $1\times n$ sub-nets are cone-cylinder strips. Then for each $0 \leq i < m$, the broken line $P_{i+1,0}\dots P_{i+1,n}$ is obtained from $P_{i,0}\dots P_{i,n}$ by a central similarity or a translation. Then all the broken lines are obtained from $P_{0,0}\dots P_{0,n}$ by central similarities or translations. This means that~\eqref{eq-p-cone-cylinder-nets} holds for $b_j:=P_{0,j}$ and suitable $a_i$ and $\sigma_i$.

\emph{The `if' part.} Take an $m\times n$ net given by~\eqref{eq-p-cone-cylinder-nets}.
Any broken line $P_{i,0}\dots P_{i,n}$ (with constant $i$) arises from the broken line $b_0\dots b_n$ 
by uniform scaling
with factor $\sigma_i$ and subsequent translation by $a_i$. Hence, two consecutive broken lines  $P_{i,0}\dots P_{i,n}$ and $P_{i+1,0}\dots P_{i+1,n}$ are related by a central similarity or a translation in case of $\sigma_i=\sigma_{i+1}$. 
Thus, the broken lines have parallel edges and are connected by a cone, whose vertex $c_{i}$ is easily seen to be 
$$ c_{i}=\frac{\sigma_{i+1}a_{i}-\sigma_{i}a_{i+1}}{\sigma_{i+1}-\sigma_{i}}.$$ 
%
The cone becomes a cylinder with ruling direction $a_{i}-a_{i+1}$ for $\sigma_{i}=\sigma_{i+1}$. We get a cone-cylinder~net. 
\end{proof}

As a 
corollary, 
any cone-cylinder net 
(at least, with noncoplanar faces) 
is also a doubled cone-cylinder net,
hence is deformable.    
%
One can also construct the deformation explicitly. This will be a special case of a \emph{conical Combescure transform} from \cite[\S4]{kilian2023smooth}. We 
assume $\sigma_i>0$ for simplicity.


\begin{proposition} \label{p-cone-cylinder-deformation}
    A cone-cylinder net~\eqref{eq-p-cone-cylinder-nets} with all $\sigma_i>0$ is embedded into a family 
\begin{equation} \label{eq:discrete-family}
P_{ij}(t):=
    \underbrace{a_0+\sum_{k=1}^{i} \frac{\left(a_k-a_{k-1}\right)(\sigma_{k}+\sigma_{k-1})}
    {\sqrt{t+\sigma_{k}^2\vphantom{t+\sigma_{k-1}^2}}+\sqrt{t+\sigma_{k-1}^2}} 
    }_{a_{i}(t)}
    +\underbrace{\sqrt{t+\sigma_{i}^2}
    \vphantom{a_0+\sum_{k=1}^{i} \frac{\left(a_i-a_{i-1}\right)(\sigma_{i}+\sigma_{i-1})}
    {\sqrt{t+\sigma_{i}^2}+\sqrt{t+\sigma_{i-1}^2}}}}_{\sigma_i(t)} b_j,
    \qquad 
    t\in [0,\varepsilon],\, 0 \leq i \leq m\text,\, 0 \leq j \leq n,
\end{equation} 
    of cone-cylinder nets which are 
    its area-preserving Combescure transformations, for some $\varepsilon>0$.
\end{proposition}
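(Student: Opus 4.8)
The plan is a direct verification organized around two algebraic identities that make the prescribed coefficients work. Write the family as $P_{ij}(t)=a_i(t)+\sigma_i(t)\,b_j$ with $\sigma_i(t):=\sqrt{t+\sigma_i^2}$ and $a_i(t)$ the bracketed sum in~\eqref{eq:discrete-family}. First I would record that this is again of the form~\eqref{eq-p-cone-cylinder-nets}, so by Proposition~\ref{p-cone-cylinder-nets} each member is a cone-cylinder net as soon as its faces stay convex; since the faces are convex at $t=0$ and all data depend continuously on $t$, this holds on some interval $[0,\varepsilon]$. That the family starts at the given net is the telescoping identity
\[
a_i(0)=a_0+\sum_{k=1}^{i}\frac{(a_k-a_{k-1})(\sigma_k+\sigma_{k-1})}{\sigma_k+\sigma_{k-1}}=a_i,
\qquad \sigma_i(0)=\sigma_i,
\]
where $\sigma_i>0$ is used to get $\sqrt{\sigma_i^2}=\sigma_i$.

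Next I would check the Combescure (parallel-edge) property edge by edge. For the ``horizontal'' edges, $P_{i,j+1}(t)-P_{i,j}(t)=\sigma_i(t)\,(b_{j+1}-b_j)$ is a scalar multiple of the original edge. For the ``vertical'' edges the point is that the $a$-part and the $\sigma$-part scale by the \emph{same} factor: abbreviating $S_i:=\sigma_i+\sigma_{i+1}$ and $D_i(t):=\sigma_i(t)+\sigma_{i+1}(t)$, the increment of the sum in~\eqref{eq:discrete-family} gives $a_{i+1}(t)-a_i(t)=\tfrac{S_i}{D_i(t)}(a_{i+1}-a_i)$, while rationalizing the difference of square roots gives $\sigma_{i+1}(t)-\sigma_i(t)=\tfrac{\sigma_{i+1}^2-\sigma_i^2}{D_i(t)}=\tfrac{S_i}{D_i(t)}(\sigma_{i+1}-\sigma_i)$. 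Hence
\[
P_{i+1,j}(t)-P_{i,j}(t)=\frac{S_i}{D_i(t)}\bigl(P_{i+1,j}-P_{i,j}\bigr),
\]
so every edge stays parallel to its original and the nets are Combescure transforms.

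For the area I would use that each face $P_{i,j}P_{i+1,j}P_{i+1,j+1}P_{i,j+1}$ is planar, so its area is $\tfrac12$ times the norm of the bivector $(P_{i+1,j+1}-P_{i,j})\wedge(P_{i,j+1}-P_{i+1,j})$. Expanding the wedge and collecting terms in the fixed vectors $b_j,b_{j+1}$ and $a_{i+1}-a_i$, I expect to obtain
\[
\bigl(\sigma_i(t)+\sigma_{i+1}(t)\bigr)\,(a_{i+1}(t)-a_i(t))\wedge(b_{j+1}-b_j)-\bigl(\sigma_{i+1}(t)^2-\sigma_i(t)^2\bigr)\,b_{j+1}\wedge b_j.
\]
Both coefficients are manifestly $t$-independent: the first equals $S_i\,(a_{i+1}-a_i)$ by the vertical-edge computation above, and the second equals $\sigma_{i+1}^2-\sigma_i^2$ because $\sigma_i(t)^2=t+\sigma_i^2$. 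Thus the whole bivector, hence the area, is constant along the family.

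The computation is routine once the two invariances are isolated, so the only real content is recognizing them: the exponent-$\tfrac12$ choice $\sigma_i(t)=\sqrt{t+\sigma_i^2}$ is exactly what keeps the squared-length differences $\sigma_{i+1}(t)^2-\sigma_i(t)^2$ constant, and the denominators $\sqrt{t+\sigma_k^2}+\sqrt{t+\sigma_{k-1}^2}$ in~\eqref{eq:discrete-family} are exactly what forces the $a$-increments to track the $\sigma$-increments, so that both the parallelism and the first area coefficient survive. The main obstacle I anticipate is purely bookkeeping --- carrying out the wedge expansion and the sign conventions so that the two $t$-dependent coefficients cleanly collapse --- together with the minor analytic point that $\varepsilon$ must be chosen small enough to keep all faces convex and non-degenerate. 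After that, the strict increase of $\sigma_i(t)$, hence of the horizontal edge lengths, shows the deformation is non-trivial, i.e.\ not congruent to the initial net.
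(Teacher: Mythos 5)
Your proof is correct and follows essentially the same route as the paper's: the same use of Proposition~\ref{p-cone-cylinder-nets} for the cone-cylinder property, the same edge-vector computations (with the same rationalization of $\sigma_{i+1}(t)-\sigma_i(t)$) for the Combescure property, and the same non-congruence argument via the strictly increasing edge lengths $\sigma_i(t)\,|b_j-b_{j-1}|$. The only difference is the area step, where the paper exploits the trapezoidal shape of each face --- showing that the product of the sum of the two parallel sides with the lateral side is constant, the angles being fixed by parallelism --- whereas you show that the diagonal bivector $(P_{i+1,j+1}-P_{ij})\wedge(P_{i,j+1}-P_{i+1,j})$ is itself $t$-independent, an equivalent but slightly more invariant computation.
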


\begin{proof} By Proposition~\ref{p-cone-cylinder-nets}, net~\eqref{eq:discrete-family} is a cone-cylinder net for each $t$ sufficiently close to $0$, because it is of the form $P_{ij}(t)=a_i(t)+\sigma_i(t)b_j$. For $t=0$, it coincides with~\eqref{eq-p-cone-cylinder-nets} because all $\sigma_i>0$. Since 
    \begin{align*}
    \overrightarrow{P_{i,j-1}(t)P_{ij}(t)}&=
    \sqrt{t+\sigma_i^2}\cdot(b_j-b_{j-1})=
    \frac{\sqrt{t+\sigma_i^2}}{\sigma_i}\cdot\overrightarrow{P_{i,j-1}P_{ij}},\\
    \overrightarrow{P_{i-1,j}(t)P_{ij}(t)}&=
    \frac{\left(a_i-a_{i-1}\right)(\sigma_{i}+\sigma_{i-1})}
    {\sqrt{t+\sigma_{i}^2\vphantom{t+\sigma_{i-1}^2}}+\sqrt{t+\sigma_{i-1}^2}}+
    \left(\sqrt{t+\sigma_{i}^2\vphantom{t+\sigma_{i-1}^2}}-\sqrt{t+\sigma_{i-1}^2}\right)b_j
    =\frac{(\sigma_{i}+\sigma_{i-1})\overrightarrow{P_{i-1,j}P_{ij}}}
    {\sqrt{t+\sigma_{i}^2\vphantom{t+\sigma_{i-1}^2}}+\sqrt{t+\sigma_{i-1}^2}},
    \end{align*}
    for each $t\in[0,\varepsilon]$, nets~\eqref{eq:discrete-family} and~\eqref{eq-p-cone-cylinder-nets} are Combescure transforms. Area preservation follows from
    $$
    \left(P_{i,j-1}(t)P_{ij}(t)+P_{i-1,j-1}(t)P_{i-1,j}(t)\right)\cdot P_{i-1,j}(t)P_{ij}(t)=
    |b_j-b_{j-1}|(\sigma_{i}+\sigma_{i-1})P_{i-1,j}P_{ij} =\mathrm{const}.
    $$
    For distinct $t\in[0,\varepsilon]$, nets~\eqref{eq:discrete-family} are non-congruent because the edge lengths $P_{i,j-1}(t)P_{ij}(t)$ are distinct.
\end{proof}

\subsubsection{Class~(ii)} All we said about $2\times 2$ nets from class~(ii) in Section~\ref{ssec-properties-deformable-2x2} remains true for $m\times n$ nets. 
Propositions~\ref{p-Koenigs}, \ref{ex-ii-geo}, \ref{p-class-ii} remain true with the same proofs, if ``$2\times 2$'' is replaced by ``$m\times n$'':

\begin{proposition}
    \label{th-christoffel}
    An $m\times n$ net satisfies condition (ii) of Theorem~\ref{th-mxn} if and only if it has a Christoffel dual with the same areas of corresponding faces. For such an $m\times n$ net, a family of area-preserving Combesure transformations is given by~\eqref{eq-ii-geo} for $0\le i\le m$ and $0\le j\le n$. 
\end{proposition}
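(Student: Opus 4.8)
The plan is to carry over the proofs of Propositions~\ref{p-class-ii} and~\ref{ex-ii-geo} almost verbatim, checking that the hypothesis of being a $2\times 2$ net was never used in an essential way, and to reduce the one genuinely global assertion to the $2\times 2$ case already settled. The starting observation is that condition~(ii) of Theorem~\ref{th-mxn} is \emph{local}: it is a requirement imposed separately on each pair of faces sharing a common edge, and every such pair lies in a $2\times 2$ sub-net. Hence the $m\times n$ net satisfies condition~(ii) if and only if every $2\times 2$ sub-net satisfies condition~(ii) of Theorem~\ref{th-classification}.

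For the equivalence with the existence of an equal-area Christoffel dual, I would repeat the construction in the proof of Proposition~\ref{p-class-ii}. For each face $f_k$ I build a dual quadrilateral $f_k^*$ of the \emph{same} area, fixing its orientation by the conventions of Figure~\ref{figure:dual-of-2-quads} (same direction on edges of type $P_{ij}P_{i+1,j}$, opposite direction on edges of type $P_{ij}P_{i,j+1}$); by \cite[Lemma~2.20]{bobenko-2008-ddg} each $f_k^*$ is then determined up to translation. By Proposition~\ref{p-dual-with-equal-areas}, two dual quadrilaterals attached to faces sharing a common edge $e$ produce the same oriented dual edge $e^*$ precisely when the two opposite ratios with respect to $e$ agree. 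Thus, when condition~(ii) holds, the $f_k^*$ can be translated so that adjacent ones share their common dual edges; conversely, an equal-area Christoffel dual forces all these opposite ratios to coincide. Since each $f_k^*$ is built to have exactly the area of $f_k$, there is no residual global scaling freedom to worry about.

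The one point that is genuinely global — and the step I expect to be the main obstacle — is that the translated dual quadrilaterals must close up consistently around each interior vertex, where four dual faces meet. Matching a dual quadrilateral along a non-degenerate common dual edge pins down its translation uniquely, so the only possible obstruction to a globally well-defined dual lies in the cycles of the face-adjacency graph. For an $m\times n$ net this graph is a grid graph whose cycle space is generated by its elementary $4$-cycles, and each such $4$-cycle corresponds exactly to a $2\times 2$ block of faces around an interior vertex, i.e.\ to a $2\times 2$ sub-net. For these, Proposition~\ref{p-class-ii} already guarantees closure; consistency on a generating set of cycles yields consistency on all of them, giving a globally defined equal-area Christoffel dual $P_{ij}^*$.

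Finally, the family \eqref{eq-ii-geo} is verified face by face exactly as in the proof of Proposition~\ref{ex-ii-geo}, with no interaction between faces, so the argument extends to the $m\times n$ case without change: the nets $P_{ij}$, $P_{ij}^*$, and $P_{ij}(t)$ are parallel, and for each primal face $f$ and its dual $f^*$ the opposite orientations give $\mathrm{Area}(f^*)=-\mathrm{Area}(f)$ and vanishing mixed area $\mathrm{Area}(f,f^*)=0$ by \cite[Theorem~13]{bobenko2010curvature}, whence $\mathrm{Area}(f(t))=\mathrm{Area}(f)\cosh^2 t+\mathrm{Area}(f^*)\sinh^2 t+2\,\mathrm{Area}(f,f^*)\cosh t\sinh t=\mathrm{Area}(f)$ for every face and every $t\in[0,\varepsilon]$, using \cite[Eq.~(3)]{bobenko2010curvature}. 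This proves that \eqref{eq-ii-geo} is a family of area-preserving Combescure transformations.
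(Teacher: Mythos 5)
Your proof is correct and follows essentially the paper's own route: the paper's entire proof of this proposition is the observation that Propositions~\ref{p-class-ii} and~\ref{ex-ii-geo} remain true, with the same proofs, once ``$2\times 2$'' is replaced by ``$m\times n$'', and your text is precisely that verification --- edge-by-edge matching of equal-area dual quadrilaterals via Proposition~\ref{p-dual-with-equal-areas}, then the face-by-face mixed-area computation for the family~\eqref{eq-ii-geo}. The one place you genuinely diverge is the global closure step, which you handle by a cycle-space argument on the face-adjacency grid graph, bootstrapping from the already-proved $2\times 2$ case. That argument is sound, but it is heavier than necessary, which is why the paper treats the step as immediate: once the oriented dual edge vectors agree across every interior edge, they constitute a well-defined parallel edge system on the simply connected grid whose sum around each face boundary vanishes (each dual quadrilateral closes up by construction), so the dual vertices $P_{ij}^*$ are obtained by integration from a base vertex, with no monodromy obstruction left to check. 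Equivalently, inside one of your elementary $4$-cycles the three gluings already identify the four dual copies of the central vertex, after which the two copies of the remaining dual edge coincide automatically: they emanate from the same point, are parallel with the same orientation convention, and have equal length by Proposition~\ref{p-dual-with-equal-areas}. A minor side remark: your opening claim, that condition~(ii) of Theorem~\ref{th-mxn} is equivalent to condition~(ii) of Theorem~\ref{th-classification} holding for all $2\times 2$ sub-nets, is vacuous (and in one direction false) when $m=1$ or $n=1$; your proof still covers these strips, but only because the face-adjacency graph is then a tree, so no closure condition arises.
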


In class~(ii), the following \emph{zig-zag phenomenon} occurs. Recall that by Proposition~\ref{p-opposite-ratio-equivalent-def} the rays extending the sides 
$BC$ and $AD$ of a convex quadrilateral $ABCD$ intersect if and only if the opposite ratio of $ABCD$ with respect to $AB$ is greater than $1$. If two quadrilaterals $ABCD$ and $ABC'D'$ in the plane have a common side $AB$ (and no other common points), and opposite ratios with respect to that side are equal, then the pairs of lines $BC, AD$ and  $BC', AD'$ intersect on the opposite sides of the line $AB$, unless $BC\parallel AD$ and $BC'\parallel AD'$. See Figure~\ref{figure:inter-point-oppos-side}. For spatial nets from class~(ii), the same happens in the projection to any plane. This shows that the `zig-zag' shape of the discrete parameter lines like in Figure~\ref{fig:type ii} is unavoidable unless the parameter lines have parallel edges. The same is true for class~(i), but only for the isolines in one of the two directions. Cf.~\cite[Figure~5 to the right]{kilian2023smooth}.

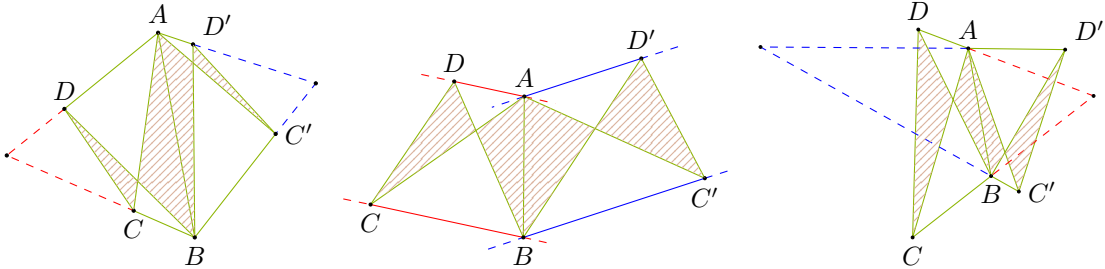
\begin{figure}[htbp]
  \centering
\begin{tikzpicture}[scale=0.85]

\coordinate (R) at (-2.72553, -2.25912);
\coordinate (A) at (-0.359, -0.33859);
\coordinate (B) at (0.21442, -3.5425);
\coordinate (C) at (-0.74129, -3.12381);
\coordinate (D) at (-1.82443, -1.53096);
\coordinate (C') at (1.4796, -1.92234);
\coordinate (D') at (0.18712, -0.52063);
\coordinate (R') at (2.10764, -1.13047);
\coordinate (P) at (-0.67757, -2.66871);
\coordinate (P') at (0.18712, -0.8119);

\draw[applegreen] (A) -- (B) -- (C) -- (D) -- (A) -- (D') -- (C') -- (B);
\draw[applegreen] (C) -- (A) -- (C');
\draw[applegreen] (D') -- (B) -- (D);
\draw[red, dashed, thin] (D) -- (R) -- (C);
\draw[blue, dashed, thin] (D') -- (R') -- (C');

\filldraw[black] (B) circle (0.7pt) node[anchor=north]{$B$};
\filldraw[black] (C) circle (0.7pt) node[anchor=north]{$C$};
\filldraw[black] (A) circle (0.7pt) node[anchor=south]{$A$};
\filldraw[black] (C') circle (0.7pt) node[anchor=west]{$C'$};
\filldraw[black] (D) circle (0.7pt) node[anchor=south]{$D$};
\filldraw[black] (D') circle (0.7pt) node[anchor=south west]{$D'$};
\filldraw[black] (R) circle (0.7pt) node[anchor=east]{};
\filldraw[black] (R') circle (0.7pt) node[anchor=west]{};

\path[name path=line 1] (B) -- (D);
\path[name path=line 2] (A) -- (C);
\path[name path=line 3] (B) -- (D');
\path[name path=line 4] (A) -- (C');

\fill[name intersections={of=line 1 and line 2, by=X1}];
\fill[name intersections={of=line 3 and line 4, by=X2}];

\fill[pattern=north east lines, pattern color=antiquebrass!70] (C) -- (X1) -- (D) -- cycle;
\fill[pattern=north east lines, pattern color=antiquebrass!70] (A) -- (X1) -- (B) -- cycle;
\fill[pattern=north east lines, pattern color=antiquebrass!70] (B) -- (X2) -- (A) -- cycle;
\fill[pattern=north east lines, pattern color=antiquebrass!70] (C') -- (X2) -- (D') -- cycle;

\end{tikzpicture}
\begin{tikzpicture}[scale=0.57]

\coordinate (R) at (-2.72553, -2.25912);
\coordinate (A) at (3.73856, 6.18675);
\coordinate (B) at (3.71663, 2.8976);
\coordinate (C) at (0.15339, 3.66507);
\coordinate (D) at (2.10495, 6.53759);
\coordinate (C') at (7.94867, 4.27904);
\coordinate (D') at (6.46855, 7.07482);
\coordinate (X1) at (1.33748, 6.70205);
\coordinate (X2) at (-0.48252, 3.79663);
\coordinate (X3) at (4.39639, 2.7441);
\coordinate (X4) at (4.46217, 6.02229);
\coordinate (Y1) at (7.32373, 7.34891);
\coordinate (Y2) at (2.98205, 5.93458);
\coordinate (Y3) at (2.89434, 2.6235);
\coordinate (Y4) at (8.6065, 4.48735);
\coordinate (P) at (2.60928, 5.39735);
\coordinate (P') at (5.39409, 5.43024);

\draw[red, thin] (D) -- (A);
\draw[red, thin] (B) -- (C);
\draw[red, dashed, thin] (X1) -- (D);
\draw[red, dashed, thin] (A) -- (X4);
\draw[red, dashed, thin] (B) -- (X3);
\draw[red, dashed, thin] (X2) -- (C);

\draw[blue, thin] (A) -- (D');
\draw[blue, thin] (B) -- (C');
\draw[blue, dashed, thin] (Y1) -- (D');
\draw[blue, dashed, thin] (A) -- (Y2);
\draw[blue, dashed, thin] (Y3) -- (B);
\draw[blue, dashed, thin] (C') -- (Y4);

\draw[applegreen] (C) -- (D);
\draw[applegreen] (D') -- (C');
\draw[applegreen] (A) -- (B);
\draw[applegreen] (C) -- (A) -- (C');
\draw[applegreen] (D') -- (B) -- (D);

\filldraw[black] (B) circle (1pt) node[anchor=north]{$B$};
\filldraw[black] (C) circle (1pt) node[anchor=north]{$C$};
\filldraw[black] (A) circle (1pt) node[anchor=south]{$A$};
\filldraw[black] (C') circle (1pt) node[anchor=north]{$C'$};
\filldraw[black] (D) circle (1pt) node[anchor=south]{$D$};
\filldraw[black] (D') circle (1pt) node[anchor=south]{$D'$};

\path[name path=line 1] (B) -- (D);
\path[name path=line 2] (A) -- (C);
\path[name path=line 3] (B) -- (D');
\path[name path=line 4] (A) -- (C');

\fill[name intersections={of=line 1 and line 2, by=X1}];
\fill[name intersections={of=line 3 and line 4, by=X2}];

\fill[pattern=north east lines, pattern color=antiquebrass!70] (C) -- (X1) -- (D) -- cycle;
\fill[pattern=north east lines, pattern color=antiquebrass!70] (A) -- (X1) -- (B) -- cycle;
\fill[pattern=north east lines, pattern color=antiquebrass!70] (B) -- (X2) -- (A) -- cycle;
\fill[pattern=north east lines, pattern color=antiquebrass!70] (C') -- (X2) -- (D') -- cycle;

\end{tikzpicture}
\begin{tikzpicture}[scale=0.53]

\coordinate (R) at (2.7847, -1.53179);
\coordinate (A) at (-0.359, -0.33859);
\coordinate (B) at (0.21442, -3.5425);
\coordinate (C) at (-1.75432, -5.08407);
\coordinate (D) at (-1.61179, 0.13471);
\coordinate (C') at (0.90989, -3.93287);
\coordinate (D') at (2.07206, -0.36962);
\coordinate (R') at (-5.56973, -0.30384);
\coordinate (P) at (-0.73468, -1.6195);
\coordinate (P') at (0.55905, -2.94612);

\draw[applegreen] (A) -- (B) -- (C) -- (D) -- (A) -- (D') -- (C') -- (B);
\draw[applegreen] (C) -- (A) -- (C');
\draw[applegreen] (D') -- (B) -- (D);
\draw[red, dashed, thin] (A) -- (R) -- (B);
\draw[blue, dashed, thin] (A) -- (R') -- (B);

\filldraw[black] (B) circle (1pt) node[anchor=north]{$B$};
\filldraw[black] (C) circle (1pt) node[anchor=north]{$C$};
\filldraw[black] (A) circle (1pt) node[anchor=south]{$A$};
\filldraw[black] (C') circle (1pt) node[anchor=west]{$C'$};
\filldraw[black] (D) circle (1pt) node[anchor=south]{$D$};
\filldraw[black] (D') circle (1pt) node[anchor=south west]{$D'$};
\filldraw[black] (R) circle (1pt) node[anchor=west]{};
\filldraw[black] (R') circle (1pt) node[anchor=east]{};

\path[name path=line 1] (B) -- (D);
\path[name path=line 2] (A) -- (C);
\path[name path=line 3] (B) -- (D');
\path[name path=line 4] (A) -- (C');

\fill[name intersections={of=line 1 and line 2, by=X1}];
\fill[name intersections={of=line 3 and line 4, by=X2}];

\fill[pattern=north east lines, pattern color=antiquebrass!70] (C) -- (X1) -- (D) -- cycle;
\fill[pattern=north east lines, pattern color=antiquebrass!70] (A) -- (X1) -- (B) -- cycle;
\fill[pattern=north east lines, pattern color=antiquebrass!70] (B) -- (X2) -- (A) -- cycle;
\fill[pattern=north east lines, pattern color=antiquebrass!70] (C') -- (X2) -- (D') -- cycle;

\end{tikzpicture}

\caption{The position of the intersection points of the opposite sides of quadrilaterals $ABCD$ and $ABC'D'$ having equal opposite ratios with respect to the common side $AB$.}
\label{figure:inter-point-oppos-side}
\end{figure}




\subsection{Proof of the classification}
\label{ssec-proof-deformable-mxn}

For the proof of the 
classification of deformable $m\times n$ nets, we need the following lemmas describing deformations of their small sub-nets.

\begin{lemma}
    \label{lemma-uniq-def-1x1}
    For each segment $e'$ parallel and sufficiently close in length to an edge $e$ of a $1\times1$ net, there exists a unique deformation of this net such that $e'$ is its edge corresponding to $e$.  
\end{lemma}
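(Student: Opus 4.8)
The plan is to recast the statement as an application of the implicit function theorem, with the prescribed edge length playing the role of the parameter and the positions of the remaining vertices as the unknowns. Write the $1\times1$ net as a convex quadrilateral $P_{00}P_{10}P_{11}P_{01}$ with consecutive edge vectors $e_1=\overrightarrow{P_{00}P_{10}}$, $e_2=\overrightarrow{P_{10}P_{11}}$, $e_3=\overrightarrow{P_{11}P_{01}}$, $e_4=\overrightarrow{P_{01}P_{00}}$, so that $e_1+e_2+e_3+e_4=0$. The four vertices of a convex quadrilateral are coplanar, so I would work in $\mathbb{R}^2$. Fixing $P_{00}$ to kill translations, every Combescure transform of the net has edge vectors $x_1e_1,\dots,x_4e_4$ for scalars $x_i$ near $1$, subject to the closing condition $x_1e_1+x_2e_2+x_3e_3+x_4e_4=0$. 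Prescribing an edge $e'$ parallel and close in length to $e=e_1$ then amounts to fixing $x_1=|e'|/|e_1|$ near $1$; the remaining edges are handled by relabeling, using the cyclic symmetry of the quadrilateral.

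Next I would turn the area into an honest function of the unknowns. With $P_{00}$ at the origin, the quadrilateral with vertices $0,\ x_1e_1,\ x_1e_1+x_2e_2,\ x_1e_1+x_2e_2+x_3e_3$ has oriented area
\[
A(x_1,x_2,x_3)=\frac{1}{2}\big(x_1x_2\,(e_1\times e_2)+x_1x_3\,(e_1\times e_3)+x_2x_3\,(e_2\times e_3)\big),
\]
and on the closing locus its fourth vertex equals $-x_4e_4=P_{01}$, so $A$ there coincides with the oriented area of the Combescure transform. I would then study the map $\Phi(x_2,x_3,x_4)=\big(x_1e_1+\dots+x_4e_4,\ A-A_0\big)\in\mathbb{R}^2\times\mathbb{R}$, where $A_0$ is the original area, observing that $\Phi=0$ at the base point $x_2=x_3=x_4=1$ when $x_1=1$. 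A zero of $\Phi$ is exactly a Combescure transform with first edge $x_1e_1$ and the same (oriented, hence by nearness genuine) area, i.e. precisely the required deformation.

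The crux is to show that the $3\times3$ Jacobian $\partial\Phi/\partial(x_2,x_3,x_4)$ is invertible at the base point. Its first two rows have columns $e_2,e_3,e_4\in\mathbb{R}^2$, while the area row is $\frac{1}{2}\big((e_1\times e_2)+(e_2\times e_3),\ (e_1\times e_3)+(e_2\times e_3),\ 0\big)$. Expanding the determinant along the last column and substituting $e_4=-(e_1+e_2+e_3)$ repeatedly, I expect the cancellations to collapse it to
\[
\det\frac{\partial\Phi}{\partial(x_2,x_3,x_4)}\Big|_{(1,1,1)}=(e_2\times e_3)\,(e_3\times e_4).
\]
Since $e_2\times e_3=\overrightarrow{P_{10}P_{11}}\times\overrightarrow{P_{10}P_{01}}$ and $e_3\times e_4=\overrightarrow{P_{11}P_{01}}\times\overrightarrow{P_{11}P_{00}}$ are twice the signed areas of the triangles $P_{10}P_{11}P_{01}$ and $P_{11}P_{01}P_{00}$, both factors are nonzero precisely because no three consecutive vertices of a convex quadrilateral are collinear. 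With the Jacobian invertible, the implicit function theorem yields, for every $x_1$ near $1$, a unique nearby $(x_2,x_3,x_4)$ with $\Phi=0$, depending smoothly (hence continuously) on $x_1$. Convexity being an open condition, the resulting quadrilateral is again a convex $1\times1$ net for $x_1$ close enough to $1$, and as $e'$ varies it sweeps a nonconstant family of parallel equal-area nets, so it is a genuine deformation in the sense of Definition~\ref{def-deformable-net} and is the unique one carrying $e$ to $e'$.

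The only real subtlety I anticipate is the bookkeeping: checking that the function $A$ above genuinely records the area of the Combescure transform on the closing locus (so that $A=A_0$ is area preservation and not an artifact of the chosen vertex ordering), and pushing the cross-product algebra through to the clean factorization $(e_2\times e_3)(e_3\times e_4)$. Everything else is a routine invocation of the implicit function theorem, and the whole argument is symmetric in the four edges, so prescribing any edge of the net is covered by relabeling.
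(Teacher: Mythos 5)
Your existence argument is sound, and your Jacobian computation checks out: expanding along the $e_4$-column and substituting $e_1=-(e_2+e_3+e_4)$ does collapse the determinant to $(e_2\times e_3)(e_3\times e_4)$, which is nonzero for a strictly convex quadrilateral, so the implicit function theorem produces, for each $x_1$ near $1$, a solution $(x_2,x_3,x_4)$ near $(1,1,1)$ depending continuously on $x_1$ and convex by openness. This is a legitimate, more computational alternative to the paper's synthetic argument, which instead slides the edge opposite to $e'$ along the two rays emanating from the endpoints of $e'$ and gets existence from the intermediate value theorem.

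The genuine gap is in uniqueness. The implicit function theorem gives uniqueness only among solutions \emph{near} $(1,1,1)$, whereas the lemma claims uniqueness among \emph{all} deformations, and this stronger statement is exactly what the paper needs downstream: in the uniqueness part of Lemma~\ref{lemma-uniq-def-2x2} one starts from an \emph{arbitrary} deformation of a $2\times 2$ net (a member of any continuous family, in the sense of Definition~\ref{def-deformable-net}, not assumed close to the original) and concludes via the present lemma that each of its $1\times 1$ sub-nets is ``determined uniquely''. Concretely, for fixed $x_1$ the closing condition cuts out a line in $(x_2,x_3,x_4)$-space, and your area function restricted to that line is a \emph{quadratic}, so besides the root at $(1,1,1)$ it generically has a second root at a definite distance away; for instance, for the convex quadrilateral $P_{00}=(0,0)$, $P_{10}=(4,0)$, $P_{11}=(3,3)$, $P_{01}=(1,2)$ and $x_1=1$, the equal-area locus on the closing line consists of $(1,1,1)$ and $(x_2,x_3,x_4)=(11/5,\,-1,\,19/5)$. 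In this example the second root has $x_3<0$ and is therefore not a convex net, but nothing in your argument shows this happens in general: a second root with all $x_i>0$ would be a convex parallel equal-area quadrilateral with the same edge $e'$, i.e.\ a competing deformation, and the IFT cannot exclude it. The paper's proof closes exactly this hole by a global statement: along the whole one-parameter family of \emph{convex} parallel quadrilaterals with fixed edge $e'$ (positive factors $x_2,x_3,x_4$), the area is strictly monotone, since these quadrilaterals are nested as the opposite edge slides outward; hence at most one member has the prescribed area. To repair your proof you would need to add this monotonicity, or equivalently show that the second root of the quadratic always has some $x_i\le 0$, so that the constructed solution is the only one in the convex, positive range and not merely the only one near $(1,1,1)$.
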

\begin{proof}[Proof]
    Denote by $ABCD$ the $1\times1$ net with $AB:=e$. Let $A', B'$ be the endpoints of the segment $e'$ so that $\overrightarrow{AB}$ and $\overrightarrow{A'B'}$ have the same direction. Consider arbitrary points $C', D'$ on the two rays parallel to $BC, AD$ starting at $B', A'$, respectively, such that $\overrightarrow{C'D'}$ and $\overrightarrow{CD}$ have the same direction. Since $e'$ has a sufficiently close length to $e$, we may continuously move the segment $C'D'$  so that it remains parallel to $CD$, the quadrilateral $A'B'C'D'$ remains convex, and the area of $A'B'C'D'$ changes monotonically and continuously in a small interval containing the value of the area of $ABCD$. Therefore, we can choose positions of $C', D'$ so that the quadrilaterals $ABCD$ and $A'B'C'D'$ have the same areas. The resulting quadrilateral $A'B'C'D'$ is the desired unique deformation.       
\end{proof}

\begin{lemma}
    \label{lemma-uniq-def-2x2}
    For each segment $e'$ parallel and sufficiently close in length to an edge $e$ of a deformable $2\times2$ net, there is a unique deformation of the net such that $e'$ is its edge corresponding~to~$e$.  
\end{lemma}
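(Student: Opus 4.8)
The plan is to reduce everything, via Lemma~\ref{l-system}, to the one-parameter structure of the quadratic system~\eqref{neweq-main-system}, and then to pin a deformation down by the single scalar length of the image of $e$.

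First I would describe the solution set of the \emph{first three} equations $P_1=P_2=P_3=0$ near the base point $(1,1,1,1)$. Since $\partial P_i/\partial x_{i+1}=2(x_i+m_ix_{i+1})$ equals $2(1+m_i)\ne 0$ there by Lemma~\ref{remark-lemma}, the implicit function theorem solves $x_{i+1}=\psi_i(x_i)$ locally for $i=1,2,3$, and composing these gives a smooth arc $\gamma$ which is a graph over $x_1$. Because $\psi_i'(1)=-(l_i+1)/(m_i+1)\ne 0$, again by Lemma~\ref{remark-lemma}, every coordinate $x_i$ restricts to a local diffeomorphism on $\gamma$, so each $x_i$ may serve as a local parameter. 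For a deformable net, Lemma~\ref{l-system} supplies a non-constant family of solutions of all four equations; this family lies in $\{P_1=P_2=P_3=0\}=\gamma$. Restricting the remaining polynomial $P_4$ to $\gamma$ gives an analytic function of $x_1$ whose zero set is non-discrete (it contains the image of the non-constant family), hence $P_4$ vanishes identically on $\gamma$. Thus the full deformation family equals $\gamma$; it is \emph{unique} and is monotonically parametrized by each individual $x_i$.

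Next I would localize the problem: the given edge $e$ lies in at least one face $f_j=OA_jB_jA_{j+1}$. Applying Lemma~\ref{lemma-uniq-def-1x1} to the single quadrilateral $f_j$, the segment $e'$ determines a unique deformation $f_j'$ of $f_j$ having $e'$ as the edge corresponding to $e$. Reading off the scaling of the spoke $OA_j$ in $f_j'$ produces a value $x_j$ near $1$. The restriction of $\gamma$ to $f_j$ is governed by the equation $P_j(x_j,x_{j+1})=0$, which is exactly the equation defining the area-preserving Combescure transforms of $f_j$; hence the point of $\gamma$ with this value of $x_j$ restricts on $f_j$ to the same quadrilateral $f_j'$. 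Translating the whole net so that this face coincides with $f_j'$ (possible since the two copies have equal edge vectors and thus differ only by a translation) yields a deformation of the net whose $e$-edge is exactly $e'$.

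Finally, uniqueness follows by reversing the argument: any deformation of the net with $e$-edge equal to $e'$ restricts on $f_j$ to a deformation of $f_j$ with that same edge, which by Lemma~\ref{lemma-uniq-def-1x1} is forced to be $f_j'$; this fixes $x_j$, and since $\gamma$ is parametrized bijectively by $x_j$, the net deformation is determined. The step I expect to be the main obstacle is making the reduction to a single parameter fully rigorous---specifically, verifying that the arc $\gamma$, cut out by only three of the four equations, really coincides with the entire one-dimensional deformation family (equivalently, that the fourth ``closing'' equation $P_4=0$ is automatic on $\gamma$), and that each $x_i$ is a genuine local parameter. Both rest on the strict inequalities $l_i+1,m_i+1>0$ of Lemma~\ref{remark-lemma}, so it is precisely the convexity of the faces that enters here.
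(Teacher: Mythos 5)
Your existence argument takes a genuinely different route from the paper's. The paper proves existence by invoking the Classification Theorem~\ref{th-classification} together with the explicit families of Example~\ref{ex-ii}, whose ``moreover'' clause plus continuity yields an interval of attainable lengths for the edge corresponding to $e$. You instead extract that interval directly from the deformability hypothesis: by the implicit function theorem (using $l_i+1,m_i+1>0$ from Lemma~\ref{remark-lemma}) the set $\{P_1=P_2=P_3=0\}$ is locally an analytic arc $\gamma$, the image of the non-constant witnessing family from Lemma~\ref{l-system} is connected and hence accumulates at the base point $(1,1,1,1)$, so the zeros of $P_4$ restricted to $\gamma$ accumulate there and $P_4$ vanishes identically on $\gamma$. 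This is essentially sound and is interesting precisely because it bypasses the classification. One point needs polish: for fixed $x_j$ the quadratic $P_j(x_j,\cdot)=0$ has \emph{two} roots, so to conclude that the $\gamma$-net's face and $f_j'$ have equal edge vectors (and hence differ by a translation) you must invoke the convexity-based uniqueness of Lemma~\ref{lemma-uniq-def-1x1}, not merely the fact that both satisfy $P_j=0$.

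The uniqueness half, however, has a genuine gap. By Definition~\ref{def-deformable-net} a deformation is \emph{any} member of \emph{any} continuous family of non-congruent parallel equal-area nets containing the original; nothing forces such a member to lie near the original net, hence nothing places its solution vector inside the small neighborhood where the solution set of the system is known to equal $\gamma$. Your closing step, ``since $\gamma$ is parametrized bijectively by $x_j$, the net deformation is determined,'' therefore only excludes competitors lying on $\gamma$. Concretely, once $f_j'$ fixes $x_j$ and $x_{j+1}$, the equation $P_{j+1}(x_{j+1},x_{j+2})=0$ still has a second root, far from $1$ (for $x_{j+1}=1$ it is $x_{j+2}=-(m_{j+1}+2)/m_{j+1}$), and your argument says nothing about a hypothetical valid net realizing it; indeed the class~(ii) solution family of Example~\ref{ex-ii}, continued to $1+t=(1+k_1)/(1-k_1)$ with $0<k_1<1$, is a far-away solution of~\eqref{neweq-main-system} in which the spoke $OA_1$ has returned to its original length while the other spokes have not. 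What excludes such competitors is that the far root always forces a non-convex face, and establishing this is exactly the face-by-face propagation that constitutes the paper's uniqueness proof: the face containing $e'$ is forced by Lemma~\ref{lemma-uniq-def-1x1}, then each adjacent face is forced through its already-determined shared edge, and so on around the net. Your localization to the single face $f_j$ must be continued to all four faces; the appeal to the local parametrization of $\gamma$ cannot replace it.
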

\begin{proof}[Proof]
    \emph{Uniqueness.} Consider a deformation of the $2\times2$ net
    such that $e'$ is its edge corresponding to $e$. By Lemma~\ref{lemma-uniq-def-1x1}, each  
    $1\times 1$ sub-net containing the edge $e'$ is determined uniquely. This means that the faces containing $e'$ and all the edges of those faces are determined uniquely. Applying Lemma~\ref{lemma-uniq-def-1x1} to the adjacent faces repeatedly, we get that they all are determined uniquely.

    \emph{Existence}. By Theorem~\ref{th-classification}, a deformable $2\times2$ net 
    satisfies one of the conditions~(i)--(ii) in that theorem. Take a particular family of deformations given by Example~\ref{ex-ii}.
    Consider the ratio of the lengths of $e$ and its corresponding edge in a net from this family. By the `moreover' part of Example~\ref{ex-ii}, such ratios form an interval containing $1$ in its interior. 
    Since $e'$ is close enough in length to $e$, there is a net in this family that the edge corresponding to $e$ has the same length as $e'$. 
\end{proof}

The `if' part in Theorem~\ref{th-mxn} follows from Theorem~\ref{th-classification} and the following lemma.

\begin{lemma}
    \label{lemma-def-2x2}
    If each $2\times2$ sub-net of an $m\times n$ net is deformable, then the net is deformable.
\end{lemma}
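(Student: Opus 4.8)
The plan is to build a deformation of the whole net face by face, starting from one prescribed edge and propagating outward, with the deformability of the $2\times 2$ sub-nets guaranteeing that the propagation is consistent. Without loss of generality I would fix the corner $P_{00}$ and prescribe the deformation of the single edge $e_0:=P_{00}P_{10}$ by taking its image to be the parallel segment of length $(1+t)\,|P_{00}P_{10}|$ sharing the endpoint $P_{00}$, for $t$ in a small interval around $0$; this $t$ is the deformation parameter. I would then determine the deformed faces one at a time in order of increasing $i+j$, where the face $F_{ij}$ has vertices $P_{ij},P_{i+1,j},P_{i+1,j+1},P_{i,j+1}$. The face $F_{00}$ is determined from $e_0$ by Lemma~\ref{lemma-uniq-def-1x1}. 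In this order every boundary face (one with $i=0$ or $j=0$) has exactly one previously determined neighbour, with which it shares an edge, so I determine it from that shared edge by Lemma~\ref{lemma-uniq-def-1x1}. Every interior face $F_{ij}$ with $i,j\ge 1$ has two previously determined neighbours, $F_{i-1,j}$ and $F_{i,j-1}$, and the whole point is to check that determining $F_{ij}$ from the edge it shares with one neighbour automatically makes it fit the edge it shares with the other.

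The consistency at interior faces is where the hypothesis is used, and it is the heart of the argument. I would first record the elementary observation that a deformation of a $1\times 1$ net is determined by any single one of its deformed edges: two deformations sharing one edge coincide by the uniqueness in Lemma~\ref{lemma-uniq-def-1x1}. Now fix an interior face $F_{ij}$ and consider the $2\times 2$ sub-net $S$ on the vertices $P_{i-1,j-1},\dots,P_{i+1,j+1}$, whose three other faces $F_{i-1,j-1},F_{i,j-1},F_{i-1,j}$ have already been built. Since $S$ is deformable by hypothesis, Lemma~\ref{lemma-uniq-def-2x2} yields a unique deformation $\tilde S$ of $S$ whose deformed version of the bottom edge $P_{i-1,j-1}P_{i,j-1}$ equals the already-built one. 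By the observation, the face $F_{i-1,j-1}$ inside $\tilde S$ coincides with the built one (they share that edge); consequently $F_{i,j-1}$ and $F_{i-1,j}$ inside $\tilde S$ also coincide with their built counterparts, since each shares an edge (the right, respectively top, edge of $F_{i-1,j-1}$) with the built face. I would then \emph{define} the built $F_{ij}$ to be the fourth face of $\tilde S$; by construction it shares the correct edges with both $F_{i,j-1}$ and $F_{i-1,j}$, so the partial net remains a genuine net. This is exactly the step that would fail if $S$ were not deformable.

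It remains to package the result. Since there are finitely many faces, each determination map is continuous and equals the identity at $t=0$, and since convexity of the faces together with the ``sufficiently close in length'' hypotheses of Lemmas~\ref{lemma-uniq-def-1x1}--\ref{lemma-uniq-def-2x2} are open conditions, I would choose a single $\varepsilon>0$ for which the entire construction is valid for all $t\in(-\varepsilon,\varepsilon)$. The resulting nets $P_{ij}(t)$ depend continuously on $t$, are parallel to the original net (every edge stays parallel to its original), and preserve all face areas, because each face was produced by an area-preserving deformation via Lemma~\ref{lemma-uniq-def-1x1} or Lemma~\ref{lemma-uniq-def-2x2}. Finally, the edge corresponding to $e_0$ has length $(1+t)\,|e_0|$, which is strictly monotone in $t$; since a congruence in the sense of Definition~\ref{def-deformable-net} preserves the lengths of corresponding edges, the nets for distinct parameter values are pairwise non-congruent. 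Hence the net is deformable.

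The main obstacle is precisely the interior consistency of the second paragraph: extending the deformation from below and from the left could a priori disagree, and reconciling them is exactly what $2\times 2$ deformability buys, through the uniqueness in Lemma~\ref{lemma-uniq-def-2x2}. Everything else—the continuity of the family and the uniform choice of $\varepsilon$—is routine once the finite propagation is organized as above.
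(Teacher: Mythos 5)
Your proof is correct and follows essentially the same route as the paper: both arguments propagate a prescribed deformation of a single edge across the net by induction, using Lemma~\ref{lemma-uniq-def-1x1} for faces with one determined neighbour and Lemma~\ref{lemma-uniq-def-2x2} for faces with two, with the uniqueness statements of those lemmas supplying the consistency of overlapping constructions. The only difference is bookkeeping—you order the faces by increasing $i+j$ while the paper builds the first two rows and then proceeds row by row—which does not change the substance of the argument.
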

\begin{proof}[Proof]

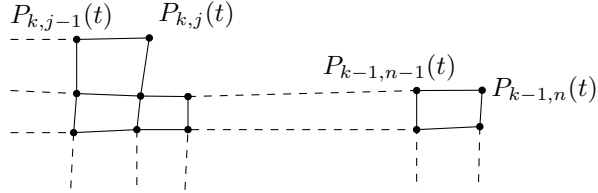
\begin{figure}[htbp]
\centering
\begin{tikzpicture}[scale=0.4]
\draw[black, thin] (7.2,7) -- (7.2, 8.8) -- (9.61303, 8.84968) -- (9.32315, 6.91716) -- (10.9, 6.9) -- (10.9, 5.8) -- (9.2, 5.8) -- (7.1, 5.7) -- (7.2,7) -- (9.32315, 6.91716) -- (9.2, 5.8);
\draw[black, thin] (18.5, 7.1) -- (20.67669, 7.11041) -- (20.6, 5.9) -- (18.5, 5.8) -- (18.5, 7.1);

\draw[black, thin, dashed] (5, 8.8) -- (7.2, 8.8);
\draw[black, thin, dashed] (10.9, 6.9) -- (18.5, 7.1);
\draw[black, thin, dashed] (20.6, 5.9) -- (20.55, 3.8);
\draw[black, thin, dashed] (5, 7.1) -- (7.2, 7);
\draw[black, thin, dashed] (9.2, 5.8) -- (9.2, 3.8);
\draw[black, thin, dashed] (5, 5.7) -- (7.1, 5.7) -- (7, 3.8);
\draw[black, thin, dashed] (10.8, 3.8) -- (10.9, 5.8) -- (18.5, 5.8) -- (18.5, 3.8);




\filldraw[black] (7.2,7) circle (3pt) node[anchor=north west]{};
\filldraw[black] (7.2, 8.8) circle (3pt);
\filldraw[black] (6.7, 8.8) circle (0pt) node[anchor=south]{$P_{k,j-1}(t)$};
\filldraw[black] (9.61303, 8.84968) circle (3pt) node[anchor=south west]{$P_{k,j}(t)$};
\filldraw[black] (9.32315, 6.91716) circle (3pt) node[anchor=north east]{};
\filldraw[black] (10.9, 6.9) circle (3pt) node[anchor=north west]{};
\filldraw[black] (10.9, 5.8) circle (3pt) node[anchor=south east]{};
\filldraw[black] (9.2, 5.8) circle (3pt) node[anchor=west]{};
\filldraw[black] (7.1, 5.7) circle (3pt) node[anchor=north east]{};

\filldraw[black] (18.5, 7.1) circle (3pt);
\filldraw[black] (17.6, 7.1) circle (0pt) node[anchor=south]{$P_{k-1,n-1}(t)$};
\filldraw[black] (20.67669, 7.11041) circle (3pt) node[anchor=west]{$P_{k-1,n}(t)$};
\filldraw[black] (20.6, 5.9) circle (3pt) node[anchor=west]{};
\filldraw[black] (18.5, 5.8) circle (3pt) node[anchor=north east]{};

\end{tikzpicture}\\
\caption{Construction of a deformation layer by layer; see the proof of Lemma~\ref{lemma-def-2x2}.}
\label{figure:net-for_S_i}
\end{figure}

    Let us construct the desired deformation by slightly moving vertices $P_{ij}$ to new positions $P_{ij}(t)$, where $0\leq i\leq m$ and $0\leq j\leq n$, one by one. 

    First, let us define inductively the points $P_{ij}(t)$ for $i=0,1$ and $j=0,\ldots,n$. Let $P_{00}(t)$ and $P_{10}(t)$ be arbitrary points such that the edges $P_{00}(t)P_{10}(t)$ and $P_{00}(0)P_{10}(0)=P_{00}P_{10}$ are parallel and have sufficiently close lengths (and continuously depend on $t$ but are not congruent for $t\ne 0$). Now, assume that the points $P_{0j}(t)$ and $P_{1j}(t)$, where $0\leq j\leq n-1$ have already been defined. Then by Lemma~\ref{lemma-uniq-def-1x1} there exist unique points $P_{0,j+1}(t)$ and $P_{1,j+1}(t)$ such that the quadrilaterals $P_{0,j}P_{0,j+1}P_{1,j+1}P_{1,j}$ and $P_{0,j}(t)P_{0,j+1}(t)P_{1,j+1}(t)P_{1,j}(t)$ have parallel sides, equal areas,  and depend on $t$ continuously. By induction, we define $P_{ij}(t)$ for $i=0,1$ and all $j=0,\ldots,n$.
    
    Now we define the points $P_{ij}(t)$ inductively for $i=2,\dots, m$ and $j=0,\dots,n$. See Figure~\ref{figure:net-for_S_i}. 

    Assume that for some $1\leq k\leq n-1$ the points $P_{ij}(t)$, where $0\leq i\leq k-1$ and $j=0,\ldots,n$ have already been defined. Let us define the points $P_{ij}(t)$ for $i=k$ and $j=0,\ldots,n$. 
    
    Apply Lemma~\ref{lemma-uniq-def-2x2} to the $2\times2$ sub-net formed by the vertices $P_{ij}$, where $k-2\le i\le k$ and $0\le j\le 2$, the edges $e'=P_{k-1,0}(t)P_{k-1,1}(t)$ and $e=P_{k-1,0}P_{k-1,1}$. The latter edges are indeed parallel and have close lengths by construction. Consider the deformation $P_{ij}(t)$, where $k-2\le i\le k$ and $0\le j\le 2$, of the $2\times2$ sub-net given by the lemma.
    Then the vertices $P_{k,0}(t),P_{k,1}(t),P_{k,2}(t)$ are the desired ones. Note that the positions of the vertices $P_{k-1,2}(t),P_{k-2,0}(t),P_{k-2,1}(t),P_{k-2,2}(t)$ in the deformation of the $2\times2$ sub-net coincide with the ones constructed before because of Lemma~\ref{lemma-uniq-def-1x1}.
    
    Finally, assume that the points $P_{k,0}(t),\ldots,P_{k,l}(t)$ have already been defined for some $2\leq l\leq n-1$. Let us define the point $P_{k,l+1}(t)$. Apply Lemma~\ref{lemma-uniq-def-2x2} to the $2\times2$ sub-net formed by the vertices $P_{ij}$, where $k-2\le i\le k$ and $l-1\le j\le l+1$, the edges $e'=P_{k-1,l-1}(t)P_{k-1,l}(t)$ and $e=P_{k-1,l-1}P_{k-1,l}$. The resulting
    deformation of the $2\times2$ sub-net gives the desired point $P_{k,l+1}(t)$. Note that the positions of the other vertices in the deformation of the sub-net coincide with the ones constructed before because of Lemma~\ref{lemma-uniq-def-1x1}. Now the lemma follows by induction.
\end{proof}

\begin{remark}
    Note that this lemma was not simple at all: to ``propagate'' a deformation to 
    all $2\times 2$ sub-nets, we needed to ensure that \emph{for any edge, a $2\times 2$ sub-net has both a deformation increasing its length and a deformation decreasing it} (the `moreover' part of Example~\ref{ex-ii}). We do not know how to prove this without proving the whole Classification Theorem~\ref{th-classification}.

    The same concerns 
    flexible nets in Euclidean geometry \cite[Theorem~3.2]{Schief2008}: to ``propagate'' a deformation to all $3\times 3$ sub-nets, we need to ensure that \emph{for any edge, a $3\times 3$ sub-net has both a flexion increasing the dihedral angle at the edge and a flexion decreasing it}. Otherwise, a priori, one flexible $3\times 3$ sub-net of a $3\times 4$ net could have no flexions increasing a particular dihedral angle, and the other flexible $3\times 3$ sub-net could have no flexions decreasing it; thus the whole $3\times 4$ net would not be flexible. This has been overlooked before although a similar phenomenon 
    occurs in the smooth setup; see, e.g., \cite[Figure~18]{Izmestiev-etal-23}. 
    We conjecture that 
    the analog of Lemma~\ref{lemma-def-2x2} for Euclidean flexible nets is \emph{not} true. 
    This shows again that the isotropic analog of a Euclidean problem gives insight for the latter.
\end{remark}

For the `only if' part of Theorem~\ref{th-mxn}, we need the following construction and lemmas.

To a $m \times n$ net $P_{ij}$ assign two $m \times n$ tables $H$ and $V$ filled by real numbers as follows. The cells of the tables are in the obvious bijection with the faces of the net. We fill the cells (faces) as follows.
    
Into a face $P_{ij}P_{i-1,j}P_{i-1,j-1}P_{i,j-1}$ of the table $H$, where $1\le i\le m$ and $1\le j\le n$, put the opposite ratio of the face with respect to the side $P_{ij}P_{i,j-1}$ (respectively, $P_{i-1,j}P_{i-1,j-1}$) if $i$ is even (respectively, odd).
Into the same face of the table $V$, put the opposite ratio with respect to the side $P_{ij}P_{i-1,j}$ (respectively, $P_{i,j-1}P_{i-1,j-1}$) if $j$ is even (respectively, odd).

Clearly, conditions~(i)--(ii) of Theorem~\ref{th-mxn} can then be restated as follows.

\begin{lemma} \label{l-tables-opposite-ratios}
    Conditions~(i)--(ii) of Theorem~\ref{th-mxn} are equivalent to the following ones:

    (i) both tables $H$ and $V$ have equal rows or both have equal columns;
    
    (ii) table $H$ has equal rows and table $V$ has equal columns.
\end{lemma}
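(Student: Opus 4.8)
The plan is to translate each of conditions~(i) and~(ii) of Theorem~\ref{th-mxn} into equalities among the opposite ratios of \emph{adjacent} faces, and then to read off these equalities from the tables $H$ and $V$. Two elementary facts drive the translation: first, the opposite ratio of a quadrilateral with respect to a side is the reciprocal of its opposite ratio with respect to the \emph{opposite} side (immediate from Proposition~\ref{p-opposite-ratio-equivalent-def}); second, the even/odd convention in the definition of $H$ and $V$ is tuned so that this reciprocal alternation telescopes. I write $F_{ij}$ for the face $P_{ij}P_{i-1,j}P_{i-1,j-1}P_{i,j-1}$, let $t_{ij}$ denote its opposite ratio with respect to the edge $P_{ij}P_{i,j-1}$ and $r_{ij}$ its opposite ratio with respect to $P_{ij}P_{i-1,j}$; the opposite ratios with respect to the two remaining (opposite) sides are then $1/t_{ij}$ and $1/r_{ij}$.

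First I would handle condition~(ii). Faces with a common edge are either vertically adjacent, $F_{ij}$ and $F_{i+1,j}$ meeting along $P_{ij}P_{i,j-1}$, or horizontally adjacent, $F_{ij}$ and $F_{i,j+1}$. Equality of opposite ratios with respect to the shared edge reads $t_{ij}t_{i+1,j}=1$ (respectively $r_{ij}r_{i,j+1}=1$), since that edge is the ``top'' side of one face and the ``bottom'' side of the other and these carry reciprocal opposite ratios. Imposing $t_{ij}t_{i+1,j}=1$ for all $i$ forces $t_{ij}$ to alternate along the column, and the parity convention then makes $H_{ij}=1/t_{1j}$ independent of $i$; conversely, equal rows of $H$ recover all these relations. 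Hence condition~(ii) on vertical edges is equivalent to $H$ having equal rows, and symmetrically condition~(ii) on horizontal edges is equivalent to $V$ having equal columns, which is exactly the table form of~(ii).

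Next I would treat condition~(i), whose crux is characterising affine symmetry of an adjacent pair via opposite ratios. I claim that $F_{ij}$ and $F_{i+1,j}$ are affine symmetric with respect to their common edge if and only if $t_{ij}t_{i+1,j}=1$ and $r_{ij}=r_{i+1,j}$; that is, their opposite ratios agree both with respect to the common edge \emph{and} with respect to a pair of corresponding lateral edges. The forward direction is the affine invariance of opposite ratios noted before Theorem~\ref{th-classification}. For the converse I would take the affine map fixing the common edge pointwise and carrying the lateral vertex of $F_{ij}$ to that of $F_{i+1,j}$, and then invoke the uniqueness in Corollary~\ref{cor-existence-uniqueness-opposite-ratios} (a convex quadrilateral is determined by three vertices and its opposite ratios with respect to the two sides meeting at the middle vertex) to conclude that the image of $F_{ij}$ equals $F_{i+1,j}$. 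Granting the claim, affine symmetry of every vertical pair is equivalent to $t_{ij}t_{i+1,j}=1$ together with $r_{ij}$ being independent of $i$, which by the computation above is precisely ``$H$ has equal rows and $V$ has equal rows''; horizontal pairs yield ``both tables have equal columns'' by the same argument with $i$ and $j$ interchanged. Since condition~(i) of the theorem is the disjunction over ``each $2\times1$'' and ``each $1\times2$'' sub-net, it matches the table statement ``both have equal rows or both have equal columns''.

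I expect the main obstacle to be the converse in the affine-symmetry characterisation: showing that equality of just two opposite ratios (common edge plus one corresponding lateral pair), together with sharing the common edge, genuinely upgrades to an affine symmetry rather than a weaker affine correspondence. This needs the uniqueness of Corollary~\ref{cor-existence-uniqueness-opposite-ratios} to be applied with the two opposite ratios taken at the pair of sides meeting at the fixed endpoint of the common edge, and with its convexity and non-collinearity hypotheses verified. The remaining bookkeeping---checking that the reciprocal alternation and the parity convention combine to make the relevant quantity constant along each line of faces---is routine once the single-pair statements are established.
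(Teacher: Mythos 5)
Your proof is correct, and it coincides with what the paper intends: the paper states this lemma with no proof at all (it is introduced only by ``Clearly, conditions~(i)--(ii) of Theorem~\ref{th-mxn} can then be restated as follows''), so your write-up simply supplies the omitted verification. The two ingredients you use are exactly the ones the paper has put in place for this purpose --- the reciprocity-plus-parity bookkeeping built into the definition of the tables $H$ and $V$, and, for the only non-obvious step (upgrading equality of opposite ratios of an adjacent pair of faces to affine symmetry), the uniqueness statement of Corollary~\ref{cor-existence-uniqueness-opposite-ratios}, which the paper motivates beforehand by remarking that two opposite ratios with respect to adjacent sides determine a convex quadrilateral up to affine transformations.
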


For a deformable $2\times 2$ net, one of the resulting conditions holds by Theorem~\ref{th-classification}. The same concerns $2\times 2$ sub-nets of a deformable $m\times n$ net. We come to the following simple lemma.

\begin{lemma}
    \label{lemma-table-mxn}
    An $m\times n$ table is filled by real numbers. Assume that each square $2\times2$ consists of two equal rows or two equal columns. Then all rows or all columns of the table are equal.
\end{lemma}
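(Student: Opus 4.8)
The plan is to reformulate the hypothesis in terms of which neighbouring entries differ. Write $a_{ij}$, $1\le i\le m$, $1\le j\le n$, for the entries. Call the comparison of $a_{ij}$ with $a_{i,j+1}$ a \emph{horizontal edge} and the comparison of $a_{ij}$ with $a_{i+1,j}$ a \emph{vertical edge}, and say such an edge is \emph{broken} if the two entries differ. A $2\times 2$ square has its two rows equal exactly when both of its vertical edges are unbroken, and its two columns equal exactly when both of its horizontal edges are unbroken. Hence the hypothesis reads: \emph{no $2\times 2$ square contains both a broken vertical edge and a broken horizontal edge}. The degenerate cases $m=1$ and $n=1$ are trivial (one of the conclusions then holds with a single row or column), so I assume $m,n\ge 2$, which guarantees that every vertical and every horizontal edge of the table belongs to at least one square.

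The first key step is a local dichotomy for each square. If the left vertical edge of a square is broken, then its two rows are not equal, so by hypothesis its two columns are equal; this forces both horizontal edges unbroken, and then, since $a_{i,j+1}=a_{ij}\ne a_{i+1,j}=a_{i+1,j+1}$, the right vertical edge is broken as well. By the evident left--right symmetry, the two vertical edges of any square are \emph{either both broken or both unbroken}, and likewise for the two horizontal edges. Combined with the hypothesis, every square falls into exactly one of three types: \emph{V-active} (both vertical edges broken, both horizontal unbroken), \emph{H-active} (both horizontal broken, both vertical unbroken), or \emph{inert} (all four edges unbroken).

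The second step is propagation. Two horizontally adjacent squares share a vertical edge, so V-activeness of one makes that shared edge broken and hence makes the neighbour V-active; thus V-activeness is constant along each horizontal strip of squares. Dually, H-activeness is constant along each vertical strip. Now suppose, toward a contradiction, that the table has both a broken vertical edge and a broken horizontal edge; then some square is V-active, say in square-row $i_0$, and some square is H-active, say in square-column $j_0$. By the two propagation statements the square at position $(i_0,j_0)$ is simultaneously V-active and H-active, which is impossible. Therefore either no square is V-active, so every vertical edge is unbroken, every column is constant, and all rows are equal; or no square is H-active, so every row is constant and all columns are equal.

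I expect the main obstacle to be the local dichotomy: one must verify that a broken vertical edge really forces the \emph{other} vertical edge broken (not merely that one of the two defining equalities holds), since it is precisely this that makes the two-valued classification into V-/H-active globally consistent under propagation. Once this is secured, the propagation statements and the final ``intersection'' argument are routine bookkeeping, with the only remaining care being the already-noted remark that for $m,n\ge 2$ each edge lies in a square, so that ``no V-active square'' genuinely means ``all vertical edges unbroken''.
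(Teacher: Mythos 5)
Your proof is correct, and it takes a genuinely different route from the paper's. The paper argues by induction on $m+n$: the base case $m=n=2$ is the hypothesis itself, and the inductive step splits into two cases according to whether some two neighboring non-corner cells differ (then two overlapping sub-tables, each having all rows equal by the inductive hypothesis, are glued along a common row) or all neighboring non-corner cells agree (then corner $2\times 2$ squares are examined, with special treatment when $m=2$ or $n=2$). Your argument is direct and non-inductive: the local dichotomy (a broken vertical edge forces the square's other vertical edge to be broken), the resulting three-type classification of squares, and the propagation of V-activeness along horizontal strips and of H-activeness along vertical strips show that a broken vertical edge and a broken horizontal edge cannot coexist anywhere in the table, via the contradiction at the square $(i_0,j_0)$. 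What your approach buys is a sharper structural picture---the set of broken edges is either purely vertical or purely horizontal, and active squares fill entire strips---and it avoids the paper's somewhat delicate bookkeeping with corner cells and small sub-table sizes; what the induction buys is a shorter write-up, at the cost of a case analysis that is easier to get wrong. You are also right that the local dichotomy is the crux: without it, ``some vertical edge is broken'' would not propagate between horizontally adjacent squares, and the final intersection argument would collapse.
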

\begin{proof}[Proof] Assume that $m,n\geq 2$; otherwise there is nothing to prove.
    We prove the lemma by induction on $m+n$. The induction base, $m+n=4$, i.e., $m=n=2$, is automatic. To perform the induction step, assume that $m+n\ge 5$ and the lemma holds for any $(m-1)\times n$ or $m\times (n-1)$ table, and let us prove it for an $m\times n$ table. Consider the following two cases:

    Case 1: some two neighboring non-corner cells of the  $m\times n$ table contain unequal numbers. Without loss of generality, the two cells belong to one row. They are both contained in either two different $(m-1)\times n$ tables or two different $m\times (n-1)$ tables. Then by the inductive hypothesis, each of the two tables has equal rows or equal columns. Since the two cells belong to one row and contain different numbers, it follows that the rows are equal. Since the two tables have either a common row or a common column, the rows in the whole $m\times n$ table are equal as well.
    
    Case 2: any two neighboring non-corner cells contain equal numbers. Then consider a $2\times2$ corner table. If $m,n\geq3$ then the numbers in its cells must be equal, leading to the table with equal numbers. If, say, $m=2$ then the $2\times2$ table has equal rows, leading to equal rows of the $m\times n$ table. 
\end{proof}

Let us summarize the argument.

\begin{proof}[Proof of Theorem~\ref{th-mxn}]
The 'if' part follows from Theorem~\ref{th-classification} and Lemma~\ref{lemma-def-2x2}. The 'only if' part follows from Theorem~\ref{th-classification} and Lemmas~\ref{l-tables-opposite-ratios}--\ref{lemma-table-mxn}. 
\end{proof}

\begin{proof}[Proof of Corollary~\ref{cor-L}]
Given 
the first row and the first column of tables $H$ and $V$, any of conditions~(i)--(ii) in Lemma~\ref{l-tables-opposite-ratios} uniquely determines all the other entries. By Theorem~\ref{th-mxn}, this means that the $L$-shaped $m\times n$ net uniquely determines 
the opposite ratios of all the other faces of the deformable $m\times n$ net. It remains to show that those faces are uniquely determined by their opposite~ratios.

We define inductively the points $P_{ij}$ where $2\le i\le m$, $2\le j\le n$, repeatedly applying Corollary~\ref{cor-existence-uniqueness-opposite-ratios} to all faces; see Figure~\ref{figure:net-for_S_i}.    
Assume that the points $P_{i-1,j-1}$, $P_{i-1,j}$, $P_{i,j-1}$ have already been determined. The points are not collinear if the $L$-shaped $m\times n$ net is 
close to the $L$-shaped square net. By Corollary~\ref{cor-existence-uniqueness-opposite-ratios}, there is a unique point $P_{ij}$ such that $P_{i-1,j-1}P_{i-1,j}P_{ij}P_{i,j-1}$ has the desired opposite ratios with respect to $P_{i-1,j-1}P_{i-1,j}$ and $P_{i-1,j-1}P_{i,j-1}$. By induction, the corollary follows.
\end{proof}

\section{Smooth deformable nets} 
\label{sec-smooth-def}

In the previous section, we described all deformable discrete nets.
We now proceed to smooth deformable nets and do this only by 
finding the smooth analogs of the above classes. 

\subsection{Cone-cylinder nets}

Discrete deformable nets of class (i) are
combinations of two interleaved discrete cone-cylinder nets.
The smooth analog of both must be the same smooth surface. Passing from the discrete variables $i$ and $j$ to continuous variables $u$ and $v$ in~\eqref{eq-p-cone-cylinder-nets}, we conclude that the smooth 
analog of class (i) is exactly the \emph{smooth cone-cylinder
nets}, that is, the ones that possess a parameterization of the form 
\begin{equation}
 f(u,v)=a(u) + \sigma(u) b(v), \quad  (u,v)\in U=[\alpha,\beta]\times [\gamma,\delta]\subset\mathbb{R}^2, \label{eq:cone-cylinder}
\end{equation} 
for some smooth functions $a\colon [\alpha,\beta]\to \mathbb{R}^d$, $b\colon [\gamma,\delta]\to \mathbb{R}^d$, $\sigma \colon [\alpha,\beta]\to \mathbb{R}$.
These surfaces have been used in architectural design \cite{glymphetal}, where they are called \emph{scale-translational surfaces} with 
base curves $a(u)$ and $b(v)$ and scaling function $\sigma(u)$. 
Indeed, curve $b(v)$ gets scaled with $\sigma(u)$. Without scaling ($\sigma(u) = 1$), one obtains ordinary translational surfaces.

%
%
%
%
In the following, we assume a parameter rectangle $U=[\alpha,\beta]\times [\gamma,\delta]\subset\mathbb{R}^2$ on which parameterizations $f(u,v)$ are regular.
This means that 
$f_u(u,v)\nparallel f_v(u,v)$ for each $(u,v)\in U$ or, in our case,
$ a'(u)+\sigma'(u)b(v)\nparallel \sigma(u)b'(v)$. In particular, $\sigma(u) \ne 0$ everywhere. Assume without loss of generality that $\sigma(u)>0$, otherwise change the sign of both $\sigma$ and $b$.
We see that $f(u,v)$ is a \emph{conjugate net}, which means that 
the mixed partial derivative $f_{uv}$ 
at each point is parallel to the tangent plane; here $f_{uv}=\sigma'(u)b'(v)$ is even parallel to $f_v=\sigma(u)b'(v)$.

\begin{proposition}
    \label{th-deform-surf-i} 
    A 
    conjugate net $f\colon U\rightarrow \mathbb R^d$ has form~\eqref{eq:cone-cylinder} with $\sigma(u)>0$ if and only if 
    the tangents to the $u$-parameter lines at points of each $v$-parameter line are concurrent or parallel 
    and the tangents to the $v$-parameter lines at points of each $u$-parameter line are parallel. 
\end{proposition}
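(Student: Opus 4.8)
The plan is to prove the two implications separately: the forward (``only if'') direction is a short differentiation, while the reverse (``if'') direction is an integration argument that recovers the product structure.

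For the ``only if'' part I would start from a parameterization $f(u,v)=a(u)+\sigma(u)b(v)$ with $\sigma(u)>0$ and differentiate. Since $f_v(u,v)=\sigma(u)b'(v)$ is a scalar multiple of the vector $b'(v)$ that is fixed along each $u$-parameter line, all tangents to the $v$-parameter lines at points of a $u$-parameter line point in the direction $b'(v)$, hence are parallel. For the $u$-tangents, $f_u(u,v)=a'(u)+\sigma'(u)b(v)$, so along a $v$-parameter line (fixed $u$) the line through $f(u,v)$ with direction $f_u(u,v)$ passes, when $\sigma'(u)\neq 0$, through the common point $a(u)-\tfrac{\sigma(u)}{\sigma'(u)}\,a'(u)$ (the smooth analog of the cone vertex appearing in the proof of Proposition~\ref{p-cone-cylinder-nets}), and when $\sigma'(u)=0$ has the constant direction $a'(u)$; thus these tangents are concurrent or parallel, as claimed.

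For the ``if'' part, the condition on the $v$-tangents lets me write $f_v(u,v)=\rho(u,v)\,w(v)$, where I fix a reference value $u_0$, set $w(v):=f_v(u_0,v)$, and normalize $\rho(u_0,\cdot)\equiv 1$. By the regularity assumption $f_u\nparallel f_v$ we have $w(v)\neq 0$ and $\rho\neq 0$, and since $\rho(u_0,\cdot)=1$ while $\rho$ is continuous on the connected rectangle $U$, in fact $\rho>0$. The goal is to show that $\rho$ is independent of $v$: once this is known, writing $b(v):=\int_\gamma^v w(s)\,ds$ and integrating $f_v=\rho(u,v)\,b'(v)$ in $v$ gives $f(u,v)=f(u,\gamma)+\sigma(u)b(v)$ with $\sigma:=\rho>0$ and $a(u):=f(u,\gamma)$, which is exactly form~\eqref{eq:cone-cylinder}; no sign normalization is needed since $\rho>0$ already.

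The crux, and the step I expect to be the main obstacle, is deducing that $\rho$ does not depend on $v$ from the concurrency/parallelism of the $u$-tangents. Fixing $u$, I would split into two cases. In the parallel case $f_u(u,v)=\nu(u,v)\,d(u)$; comparing the two expressions $f_{uv}=\rho_u\,w(v)$ and $f_{uv}=\nu_v\,d(u)$ and using that $w(v)$ and $d(u)$ are linearly independent (as $w\parallel f_v$, $d\parallel f_u$, and $f_u\nparallel f_v$) forces $\rho_u\equiv 0$. In the concurrent case there is a point $c(u)$ with $c(u)-f(u,v)=t(v)\,f_u(u,v)$; differentiating in $v$ and substituting $f_v=\rho w$, $f_{uv}=\rho_u w$ yields $t'(v)\,f_u=-(\rho+t\rho_u)\,w$, and since $f_u$ and $w\parallel f_v$ are again independent, both sides vanish, giving $t'(v)=0$ and $\rho+t\rho_u=0$; as $\rho\neq 0$ this forces the constant $t=\tau(u)\neq 0$ and $\rho_u/\rho=-1/\tau(u)$. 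In both cases $\rho_u(u,v)/\rho(u,v)$ is a function $\kappa(u)$ of $u$ alone, so $\partial_u\ln\rho(u,v)=\kappa(u)$; integrating from $u_0$ and using $\rho(u_0,\cdot)=1$ shows $\ln\rho(u,v)=\int_{u_0}^u\kappa(s)\,ds$ is independent of $v$, completing the proof.
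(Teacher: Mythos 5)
Your proof is correct, and it takes a genuinely different route from the paper's. The paper works inside the conjugate-net formalism: writing $f_{uv}=pf_u+qf_v$, it cites \cite[Lemma~2]{kilian2023smooth} to translate the two geometric conditions into the equations $pq=q_v$ and $p=0$, checks these for~\eqref{eq:cone-cylinder} (where $p=0$ and $q=\sigma'/\sigma$), and for the converse integrates $f_{uv}=q(u)f_v$ twice: first in $v$, giving $f_u=q(u)f+c(u)$, then in $u$ after rewriting this as $\left(f/\sigma\right)'_u=c/\sigma$ for a positive solution of $\sigma'/\sigma=q$. You instead prove the geometric-to-analytic dictionary from scratch: your forward direction exhibits the common point $a(u)-\frac{\sigma(u)}{\sigma'(u)}a'(u)$ explicitly (the smooth analog of the cone vertex $c_i$ in the proof of Proposition~\ref{p-cone-cylinder-nets}), and your converse factors $f_v=\rho(u,v)\,w(v)$ out of the parallelism condition and uses the concurrency/parallelism of the $u$-tangents, together with the linear independence of $f_u$ and $f_v$, to force $\partial_u\ln\rho$ to be independent of $v$; the normalization $\rho(u_0,\cdot)\equiv 1$ then makes $\rho$ itself independent of $v$, and a single integration in $v$ finishes. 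The trade-off: the paper's proof is shorter and plugs directly into the cone-net machinery it builds on, while yours is self-contained, produces the tangent-cone vertex as a byproduct, and in fact never invokes the conjugacy hypothesis in the ``if'' direction, since conjugacy follows automatically from $f_{uv}=\rho_u w\parallel f_v$. The only spots where you are terse are the routine smoothness claims for $\rho$, $\nu$, and $t$ (needed before differentiating them in $v$ or $u$), but these follow from projection formulas such as $\rho=\langle f_v,w\rangle/\langle w,w\rangle$ and do not affect correctness.
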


\begin{proof}
Since $f$ is a conjugate net, it follows that $f_{uv} = pf_u +qf_v$ for some $p,q\colon U\to\mathbb{R}$. Then \cite[Lemma~2]{kilian2023smooth} asserts that the conditions on the $u$- and $v$-parameter lines in the proposition are equivalent to $pq = q_v$ and $p=0$ respectively. Those are satisfied for net~\eqref{eq:cone-cylinder} because it has $f_{uv} = f_v\sigma'(u)/\sigma(u)$. 

Conversely, if $pq = q_v$ and $p=0$, then $q=q(u)$ does not depend on $v$. We get $f_{uv}=q(u)f_v$. Integrating with respect to $v$, we get $f_u=q(u)f+c(u)$ for some function $c\colon[\alpha,\beta]\to \mathbb R^d$. Let $\sigma\colon[\alpha,\beta]\to \mathbb R$ be any positive solution of $\sigma'/\sigma=q$. Then we can write the equation $f_u=q(u)f+c(u)$ in the form $\left(f/\sigma\right)'_u=c/\sigma$. Again, integrating with respect to $u$ and multiplying both sides by $\sigma$, we arrive at $f(u,v)=a(u)+\sigma(u)\cdot b(v)$ for some $a\colon [\alpha,\beta]\rightarrow \mathbb R^d$ and $b\colon [\gamma,\delta]\rightarrow \mathbb R^d$.
\end{proof}


We found more than just the fact that $f(u,v)$ in equation (\ref{eq:cone-cylinder}) represents
a smooth cone-cylinder net. We see that
\emph{sampling the parameter
intervals for $a(u)$ and $b(v)$ and evaluating $f(u,v)$ on
the resulting grid yields a discrete $m\times n$ cone-cylinder net}. 
We have here an instant of a multi-Q-net in the sense of
Bobenko et al. \cite{multinets-2018}.

We now proceed to deformations (area-preserving C-trafos) of
surfaces (\ref{eq:cone-cylinder}) and provide an analytical
proof of deformability. 


Two conjugate nets $f,f^+\colon U\to \mathbb R^d$ are \emph{parallel} or 
 \emph{Combescure transforms} of each other if $f_u(u,v)\parallel f_u^+(u,v)$ and $f_v(u,v)\parallel f_v^+(u,v)$ for each $(u,v)\in U$. 
A Combescure transform $f^+$ of a conjugate net $f$ is \emph{area-preserving}, if determinants of the first fundamental forms agree,
$$\langle f_u,f_u\rangle\cdot\langle f_v,f_v\rangle-\langle f_u,f_v\rangle^2=\langle f^+_u,f^+_u\rangle\cdot\langle f^+_v,f^+_v\rangle-\langle f^+_u,f^+_v\rangle^2,$$ 
at each point $(u,v)\in U$, where $\langle x, y \rangle$ denotes the scalar product of vectors $x$ and $y$.
Two conjugate nets $f,f^+\colon U\to \mathbb R^d$ are \emph{congruent}, if $f^+=g\circ f$ for some isometry $g\colon \mathbb R^d\to \mathbb R^d$.

A conjugate net $f(u,v)$ is called \emph{deformable} if it belongs to a continuous family of non-congruent area-preserving Combescure transforms $f^+(u,v,t)$, where $t\in [0,1]$.

It would be interesting to find all smooth deformable nets. We have the following partial result.

\begin{proposition}
    \label{th-deform-surf}
    Any regular cone-cylinder net~
    \eqref{eq:cone-cylinder}
    is deformable. For $\sigma(u)>0$, it is embedded into a one-parameter family
\begin{equation} \label{eq:cont-family}
f^+(u,v,t):=
    \underbrace{a(\alpha)+\int_\alpha^u \frac{a'(w)\sigma(w)\,dw}{\sqrt{t+\sigma(w)^2}}}_{a(u,t)}+\underbrace{\sqrt{t+\sigma(u)^2}
    \vphantom{\frac{a'(w)\sigma(w)\,dw}{\sqrt{t+\sigma(w)^2}}}}_{\sigma(u,t)} b(v),\quad\text{for}\ t\in [0,1],\ 
    (u,v)\in U,
\end{equation} 
    of cone-cylinder nets which are related to each other by
    area-preserving Combescure transformations.
\end{proposition}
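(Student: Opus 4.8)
The plan is to verify directly that the explicit family \eqref{eq:cont-family} does everything claimed, mirroring the discrete argument of Proposition~\ref{p-cone-cylinder-deformation}. First I would dispose of the general deformability assertion: since $f$ is regular we have $\sigma(u)\ne 0$ for all $u$, so $\sigma$ keeps a constant sign, and replacing $(\sigma,b)$ by $(-\sigma,-b)$ leaves surface~\eqref{eq:cone-cylinder} unchanged while making $\sigma>0$. Hence it suffices to treat $\sigma(u)>0$ and exhibit the family~\eqref{eq:cont-family}. Writing $f^+(u,v,t)=a(u,t)+\sigma(u,t)\,b(v)$ with $a(u,t)$ and $\sigma(u,t)=\sqrt{t+\sigma(u)^2}>0$ depending only on $u$ and $t$, each member is again of the form~\eqref{eq:cone-cylinder}, hence a cone-cylinder net. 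At $t=0$ one has $\sigma(u,0)=\sigma(u)$ and $a(u,0)=a(\alpha)+\int_\alpha^u a'(w)\,dw=a(u)$, so the family indeed starts at $f$.

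The core of the computation is to differentiate~\eqref{eq:cont-family}. By the fundamental theorem of calculus, $a_u(u,t)=a'(u)\sigma(u)/\sqrt{t+\sigma(u)^2}$ and $\sigma_u(u,t)=\sigma(u)\sigma'(u)/\sqrt{t+\sigma(u)^2}$, whence
\[
f^+_u=\frac{\sigma(u)}{\sqrt{t+\sigma(u)^2}}\bigl(a'(u)+\sigma'(u)b(v)\bigr)=\frac{\sigma(u)}{\sqrt{t+\sigma(u)^2}}\,f_u,
\qquad
f^+_v=\sqrt{t+\sigma(u)^2}\,b'(v)=\frac{\sqrt{t+\sigma(u)^2}}{\sigma(u)}\,f_v.
\]
Thus $f^+_u\parallel f_u$ and $f^+_v\parallel f_v$, so $f^+(\cdot,\cdot,t)$ is a Combescure transform of $f$; moreover the two proportionality factors are nonzero, so $f^+_u\nparallel f^+_v$ precisely when $f_u\nparallel f_v$, which gives regularity of each $f^+(\cdot,\cdot,t)$ from that of $f$.

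The decisive observation — the continuous analog of the telescoping product relation used in the discrete Proposition~\ref{p-cone-cylinder-deformation} — is that the two scaling factors are reciprocal: $\lambda:=\sigma/\sqrt{t+\sigma^2}$ and $\mu:=\sqrt{t+\sigma^2}/\sigma$ satisfy $\lambda\mu=1$. Consequently $\langle f^+_u,f^+_u\rangle=\lambda^2\langle f_u,f_u\rangle$, $\langle f^+_v,f^+_v\rangle=\mu^2\langle f_v,f_v\rangle$, and $\langle f^+_u,f^+_v\rangle=\lambda\mu\langle f_u,f_v\rangle=\langle f_u,f_v\rangle$, so the determinant of the first fundamental form is multiplied by $\lambda^2\mu^2=1$ and is therefore preserved. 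This is exactly the area-preservation condition. Since all members share the directions of $f_u,f_v$ and the same first fundamental form determinant, any two of them are likewise related by an area-preserving Combescure transformation, which settles the ``related to each other'' phrasing.

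Finally, for non-congruence I would compare the lengths of the $v$-parameter curves: at a fixed $u_0$ the curve $v\mapsto f^+(u_0,v,t)$ has length $\mu\int_\gamma^\delta|f_v(u_0,v)|\,dv$ with $\mu=\sqrt{t+\sigma(u_0)^2}/\sigma(u_0)$ strictly increasing in $t$, and the integral is positive by regularity; since an isometry preserves the length of each parameter curve, the nets for distinct $t$ are non-congruent, completing the proof of deformability. I do not anticipate a genuine obstacle: the formula~\eqref{eq:cont-family} is handed to us and everything reduces to differentiation. The only real content is spotting the reciprocity $\lambda\mu=1$, which is what forces area preservation and is the smooth shadow of the discrete product identity.
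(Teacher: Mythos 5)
Your proposal is correct and follows essentially the same route as the paper: direct differentiation of the explicit family to exhibit the Combescure relation $f^+_u=\tfrac{\sigma}{\sqrt{t+\sigma^2}}f_u$, $f^+_v=\tfrac{\sqrt{t+\sigma^2}}{\sigma}f_v$, area preservation from the cancellation of these reciprocal factors (the paper computes the determinant of the first fundamental form directly and gets $\sigma^2(\|a'+\sigma'b\|^2\|b'\|^2-\langle a'+\sigma'b,b'\rangle^2)$ on both sides, which is exactly your $\lambda\mu=1$ observation), and non-congruence from the fact that $|f^+_v|=\sqrt{t+\sigma^2}\,|b'|$ varies strictly with $t$. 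Your preliminary sign normalization $\sigma>0$ also matches the paper, which performs it in the text just before the proposition.
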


This is a particular case of 
a \emph{conical Combescure transformation} introduced in~\cite[Definition~4]{kilian2023smooth}.

\begin{proof}
To prove that $f$ is deformable, the continuous family (\ref{eq:cont-family}) has to consist of 
    non-congruent area-preserving Combescure transforms $f^+\colon U\times [0,1]\rightarrow \mathbb R^d$. 

    Since $f^+(u,v,0)=f(u,v)$, the family $f^+$ contains $f$. 
    Clearly, $f^+(u,v,t)$ is continuous.
    Since 
    $$\sigma\cdot b'=f_v(u,v)\parallel f^+_v(u,v,t)=\sqrt{t+\sigma^2}\cdot b' \quad\text{and} \quad a'+\sigma'b=f_u(u,v)\parallel f^+_u(u,v,t)=\frac{\sigma(a'+\sigma'b)}{\sqrt{t+\sigma^2}},$$
    for each $t\in[0,1]$ and $(u,v)\in U$, the nets $f$ and $f^+$ are Combescure transforms. In particular, 
    $f^+(u,v,t)$ is regular for each $t\in[0,1]$.  
    It is a cone-cylinder net for each $t\in[0,1]$ because it is of the form 
    $f^+(u,v,t)=a(u,t)+\sigma(u,t)b(v)$. 
    Area preservation follows from
    $$\langle f^+_u,f^+_u\rangle \langle f^+_v,f^+_v\rangle-\langle f^+_u,f^+_v\rangle^2=\sigma^2 (\|a'+ \sigma'b\|^2\ \|b'\|^2-\langle b',a'+\sigma'b\rangle^2)=\langle f_u,f_u\rangle \langle f_v,f_v\rangle-\langle f_u,f_v\rangle^2.$$
    For distinct $t\in[0,1]$, the nets $f^+(u,v,t)$ are non-congruent because $f^+_v=\sqrt{t+\sigma^2}\cdot b'$ are distinct by the regularity condition $b'\ne 0$.
    %
\end{proof}

As for discrete deformable nets of class (ii), they do not lead to new smooth examples. Their smooth analog is smooth nets having an area-preserving Chrostoffel dual. Those are translational nets, a particular case of~\eqref{eq:cone-cylinder} with $\sigma(u)=1$. Indeed, recall that a \emph{Christoffel dual} $f^*\colon U\to\mathbb{R}^d$ of a smooth net $f\colon U\to\mathbb{R}^d$ is defined by the conditions $f^*_u=f_u/\nu^2$ and $f^*_v=-f_v/\nu^2$ for some smooth function $\nu\colon U\to (0,+\infty)$.
The transform $f^*$ is area-preserving if and only if $\nu=1$. We get $(f+f^*)'_v=0$, $(f-f^*)'_u=0$, and arrive at~\eqref{eq:cone-cylinder} with $\sigma(u)=1$.

\begin{remark} \label{rem-gaussian} Area-preserving Combescure transforms of smooth nets preserve the Gaussian curvature, the isotopic Gaussian curvature, and more generally, the relative Gaussian curvature with respect to any relative sphere. Indeed, since the directions of the tangent planes are preserved, it follows that the Gaussian image is preserved, hence its area. The area of the deforming surface does not change as well. The Gaussian curvature is a limit of the ratio of these areas and is thus preserved.  
%
\end{remark}

%
%
%

\subsection{Principal cone-cylinder nets}

Let us give a classification of smooth \emph{principal} (i.e., orthogonal) cone-cylinder nets. Hereafter we use some terminology from differential geometry; see e.g.~\cite{kilian2023smooth}.

\begin{proposition} 
Smooth orthogonal cone-cylinder nets without umbilics are exactly the principal curvature nets on cylinders,
cones, and rotational surfaces without umbilics.    
\end{proposition}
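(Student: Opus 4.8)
The plan is to reduce the statement to a single functional equation and then run a short case analysis. Throughout I work in $\mathbb{R}^3$ and use the parametrization $f(u,v)=a(u)+\sigma(u)b(v)$ with $\sigma>0$ from~\eqref{eq:cone-cylinder}, for which $f$ is automatically a conjugate net. Since a regular conjugate net is a principal (curvature-line) parametrization if and only if it is orthogonal, the hypothesis ``orthogonal cone-cylinder net'' means exactly that $f$ is a principal parametrization of the form~\eqref{eq:cone-cylinder}. Writing $f_u=a'+\sigma' b$ and $f_v=\sigma b'$ and setting $\beta:=\tfrac12\langle b,b\rangle$, the orthogonality $\langle f_u,f_v\rangle=0$ becomes, after dividing by $\sigma>0$,
\begin{equation}\label{eq-ortho-cc}
\langle a'(u),b'(v)\rangle+\sigma'(u)\beta'(v)=0\qquad\text{for all }(u,v).
\end{equation}
I will classify all $a,b,\sigma$ solving~\eqref{eq-ortho-cc} under regularity (that $f_u,f_v$ be independent) and the no-umbilics assumption.

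For the easy inclusion I would exhibit the standard principal parametrizations: a cylinder as $u\,d+c(v)$ with $c$ planar and $c'\perp d$; a cone as $a_0+\sigma(u)e(v)$ with $e(v)$ on the unit sphere about the apex $a_0$; and a surface of revolution as $a(u)+\rho(u)(\cos v,\sin v,0)$ with $a$ on the axis. Each has the form~\eqref{eq:cone-cylinder}, is orthogonal, and satisfies~\eqref{eq-ortho-cc}, so (away from umbilics) each is an orthogonal cone-cylinder net.

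For the converse I would split on $\sigma$. If $\sigma$ is constant (say $\sigma\equiv1$, a translational net), then~\eqref{eq-ortho-cc} reads $\langle a'(u),b'(v)\rangle=0$, i.e.\ $\mathrm{span}\{a'\}\perp\mathrm{span}\{b'\}$; in $\mathbb{R}^3$, regularity gives $\dim\mathrm{span}\{a'\},\dim\mathrm{span}\{b'\}\ge1$ with orthogonal sum of dimension $\le3$, and the no-umbilics hypothesis excludes the flat (totally umbilic) case $\dim=\dim=1$, forcing dimensions $1$ and $2$: one of $a,b$ is a straight line and the other a planar curve in the orthogonal plane, hence a cylinder. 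If $\sigma$ is non-constant, the decisive step is a \emph{re-centering} of the profile curve: choosing $u_1$ with $\sigma'(u_1)\ne0$ and setting $c:=a'(u_1)/\sigma'(u_1)$, evaluation of~\eqref{eq-ortho-cc} at $u_1$ gives $\langle c,b'(v)\rangle=-\beta'(v)=-\langle b(v),b'(v)\rangle$, so $\tfrac{d}{dv}\langle b+c,b+c\rangle=0$ and $b$ lies on a sphere centered at $-c$. Replacing $b\leftarrow b+c$ and $a\leftarrow a-\sigma c$ (which leaves the surface and~\eqref{eq-ortho-cc} unchanged) makes $\beta'\equiv0$, so~\eqref{eq-ortho-cc} collapses to $\langle a'(u),b'(v)\rangle=0$ with $b$ on a sphere about the origin. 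Then $\dim\mathrm{span}\{b'\}\in\{2,3\}$: if $b$ is non-planar this span is all of $\mathbb{R}^3$, forcing $a'\equiv0$, so $a$ is constant and $f$ is a cone over the spherical curve $b$; if $b$ is planar it is a circle, $\mathrm{span}\{b'\}$ is the plane of the circle, and $a'$ is forced parallel to the fixed normal of that plane, so $a$ is a straight line along the circle's axis and $f$ is a surface of revolution.

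I expect the main obstacle to be the non-constant case, specifically justifying the re-centering cleanly and then reading off the geometry: one must check that after translating the origin of $b$ the curve lies on a sphere for \emph{all} $v$ (which holds because~\eqref{eq-ortho-cc} is valid for all $v$ already at the single value $u_1$), and that in the planar subcase the scaled circles are genuinely coaxial, so that the surface is a surface of revolution rather than something more general. The rest is bookkeeping: tracking the gauge freedoms $b\mapsto b+c$ and $b\mapsto\lambda b$ with $\sigma\mapsto\sigma/\lambda$, and invoking the no-umbilics hypothesis exactly once, to discard the flat translational surface.
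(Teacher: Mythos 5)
Your proposal is correct, but it follows a genuinely different route from the paper's. The paper does not classify by computation: it observes that an orthogonal cone-cylinder net without umbilics is a special principal double cone-net, invokes Darboux's theorem (cited as Theorem~16 of Kilian, M\"uller, and Tervooren) to conclude that any such net is a M\"obius transformation of the principal curvature net on a cylinder, a cone, or a rotational surface, and then devotes the entire proof to a six-case M\"obius-geometric argument: reducing the transformation to an inversion and showing that the ``cylinder property'' (tangents to the $v$-parameter lines along each $u$-parameter line form a cylinder) forces the inversion center to lie on the axis or at the vertex, so the image stays within the same three classes. You instead work directly with $f=a(u)+\sigma(u)b(v)$, reduce orthogonality to the functional equation $\langle a'(u),b'(v)\rangle+\sigma'(u)\langle b(v),b'(v)\rangle=0$, and solve it: constant $\sigma$ gives cylinders once the totally umbilic plane is excluded, while for non-constant $\sigma$ your re-centering $b\mapsto b+c$, $a\mapsto a-\sigma c$ with $c=a'(u_1)/\sigma'(u_1)$ places $b$ on a sphere about the origin, decoupling the equation into $\langle a',b'\rangle=0$ and yielding cones ($b$ non-planar) or rotational surfaces ($b$ a circle). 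The coaxiality issue you flagged is not a genuine gap: since the circle $b$ lies on a sphere centered at the origin, its center $m$ is the foot of the perpendicular from the origin to its plane, hence proportional to the plane's normal $n$, so all circle centers $a(u)+\sigma(u)m$ of the net lie on the single line $a_0+\mathbb{R}n$ and the surface is honestly rotational. What each approach buys: yours is elementary, self-contained (no M\"obius geometry, no appeal to Darboux), and produces explicit normal forms for the three classes; the paper's is shorter given the cited classification and places the proposition in the broader context of principal double cone-nets, explaining which M\"obius transformations preserve the class.
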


\begin{proof}
    Clearly, such principal curvature nets 
    are smooth orthogonal cone-cylinder nets.

    Conversely, a smooth orthogonal cone-cylinder net without umbilics 
    is a particular case of a principal double cone-net, 
    and by a theorem by Darboux \cite[Theorem~16]{kilian2023smooth}, is a M\"obius transformation of the principal curvature net on a cylinder, a cone, or a rotational surface. It remains to show that the M\"obius transformation leads to a net of the same type, once the image has the following \emph{cylinder property}: along each $u$-parameter line, the tangents to all $v$-parameter line form a cylinder. 

    We may assume that the M\"obius transformation is an inversion~$I$. Indeed, any 
    M\"obius transformation is either a similarity or a composition of an inversion and a similarity, and all our nets are invariant under a similarity.
    
    We shall see that for the listed types of nets, each $u$-parameter line lies on a plane or a sphere $S$ orthogonal to the $v$-parameter lines. The cylinder property implies that $I(S)$ is a plane rather than a sphere, otherwise the tangents to the $v$-parameter lines would intersect at the center of the sphere. Thus the inversion center $O$ 
    should belong to the intersection of all such planes or spheres $S$. We get a strong restriction in each of the possible cases:

    Case 1: $u$-parameter lines are profiles of a rotational surface. In this case, $S$ is a plane passing through the rotation axis. Hence $O$ belongs to the axis, and
    the image is still 
    a rotational surface. 

    Case 2: $u$-parameter lines are parallel circles on a rotational surface. In this case, $S$ is a sphere or a plane symmetric with respect to the axis of revolution. Such spheres can have a common point only if they have a common circle or all are tangent at one point. In either case, the images of the profiles are orthogonal to a pencil of planes, and we get still a rotational surface. 
    
    Case 3: $u$-parameter lines are rulings of a non-rotational cylinder. Then $S$ is a plane orthogonal to the profiles (the curves orthogonal to the rulings). The intersection of all such planes $S$ is empty, 
    otherwise, the planes form a pencil, the profiles are circles and we have a rotational cylinder. 

    Case 4: $u$-parameter lines are profiles of a non-rotational cylinder. Then $S$ is a plane orthogonal to rulings. Again, the intersection of all such planes $S$ is empty and there are no suitable inversions $I$. 

    Case 5: $u$-parameter lines are rulings of a non-rotational cone. Then $S$ is a plane orthogonal to the profiles. The intersection of all such planes $S$ is the cone vertex, otherwise, the profiles are circles and we have a rotational cone. Thus $O$ is the cone vertex, and the image is still a cone.

    Case 6: $u$-parameter lines are profiles of a non-rotational cone. Then $S$ is a sphere centered at the cone vertex. The intersection of all such spheres $S$ is empty, which completes the proof.
\end{proof}

It would be interesting to obtain an analogous result in the discrete setup. Cf.~\cite[Theorem~38]{kilian2023smooth}.

\section{Conclusion, 
flexible 
nets, and
future work} \label{sec:final}

Motivated by the design of flexible Q-nets in Euclidean geometry, we first turned to
the isotropic counterpart, and more precisely, to the metric dual in $I^3$. These dual nets are  Q-nets which are deformable in the sense that they
admit a one-parameter family of area preserving Combescure transformations. Using elementary
algebraic and geometric methods, we could
completely classify these nets. They fall into two classes. Class (i) is composed of
two interleaved cone-cylinder nets. Class (ii) is characterized by the existence of a Christoffel dual with equal areas of corresponding faces. 
This implies in general a visually non-smooth behavior. As a result of that, the smooth analogs of (ii) are just translational nets and a special
case of the smooth analogs of type (i), which are smooth cone-cylinder nets, also 
 known as scale-translational surfaces.

While the Euclidean classification of flexible Q-nets has so far only been only done for
nets with $3 \times 3$ faces and led to a large number of classes \cite{izmestiev-2007},
the isotropic classification is for arbitrary $m \times n$ nets and has only two classes. 


\subsection{Flexible 
nets in $I^3$}
Let us briefly mention some basic facts on flexible 
conjugate nets in isotropic 3-space $I^3$ (both smooth and discrete).
They are related to the deformable nets studied in this
paper through metric duality. This duality
can be realized by polarity with respect to the \emph{isotropic unit sphere}
$S_i^2: 2z=x^2+y^2$, which is a rotational paraboloid in Euclidean space with Cartesian
coordinates $(x,y,z)$. It maps a point with coordinates $(u, v, w)$ to a plane and
vice versa:
\[
 \delta \colon (u, v, w) \longleftrightarrow u x + v y - z - w = 0.
\]
Parallel planes $ux+vy-z-w_i=0, \ i=1,2$, correspond to points
$(u,v,w_i)$ with the same projection $(u,v,0)$ onto the plane 
$z=0$ (\emph{top view}). Note that isotropic distances 
are
measured in the top view. A smooth surface $f$ seen as set of contact elements
$(p,T)$ (points plus tangent planes) gets mapped to a surface $\delta(f)$ as a set
of contact elements $(\delta(p),\delta(T))$ (tangent planes plus contact points). If $K_i$ is the isotropic Gaussian curvature at a 
contact element of $f$, the isotropic Gaussian curvature at the
image element is $1/K_i$ \cite{Strubecker78}. 
For a deformation of $f$, both are preserved by Remark~\ref{rem-gaussian}. 
Putting all these facts together we see that \emph{a smooth deformable 
net $f$
corresponds under metric duality $\delta$ to a flexible 
net $\delta(f)$}: The deformation keeps the
top view (thus the intrinsic metric) and the isotropic Gaussian
curvature. This characterizes an isometric deformation in $I^3$
(see \cite{isometric-isotropic}). By definition, \emph{a deformable Q-net $f$
corresponds under metric duality $\delta$ to a flexible Q-net $\delta(f)$.} If a Q-net $f$ is deformable, so are
all Q-nets which have the same top view as $f$. Via duality, we obtain the
expected isotropic counterpart of a property of flexible Q-nets in Euclidean $\RR^3$: \emph{If a Q-net
is flexible in $I^3$, so are all nets related to it by Combescure transformations}. It is interesting if the bellows conjecture (Sabitov's theorem, cf.~\cite{Gaifullin-15}) holds in $I^3$.

Let us briefly discuss the dual $\delta(f)$ of a discrete cone-cylinder net $f$.
Any polarity relates lines through a fixed point to lines in a fixed plane. In particular, $\delta$
maps parallel lines to lines with the same top view. 
Thus, a cone-cylinder net corresponds to \emph{a Q-net with planar parameter lines, where one
family of parameter lines lie in isotropic planes (parallel to
the $z$-axis)}. This family corresponds to the cylindrical strips.
The parameter lines of the other family do in general not lie in isotropic
planes.  Only a translational net $f$
corresponds to a Q-net where both families of parameter lines
lie in isotropic planes. These are the isotropic counterparts
to Voss nets (see also \cite{isometric-isotropic}). Note that the
class of Q-nets whose parameter lines lie in planes is invariant under
C-trafos and so is the property of  parameter lines lying in isotropic planes.

Even without the isotropic flexibility, the nets $\delta(f)$ are of interest for applications,
namely in architectural structures. The planarity
of faces (panels) and of parameter lines (long range supporting beams) provides an advantage for construction (see \cite{PP-2022,Cheng-CAD-2023}). If we consider the $z$-axis vertical,
one family of these long range beams lie in vertical planes, 
which can be an advantage as well. An example is shown in 
Figure~\ref{fig:dualstructure}. 

\begin{figure}[htbp]
    \centering
    \includegraphics[scale=0.041]{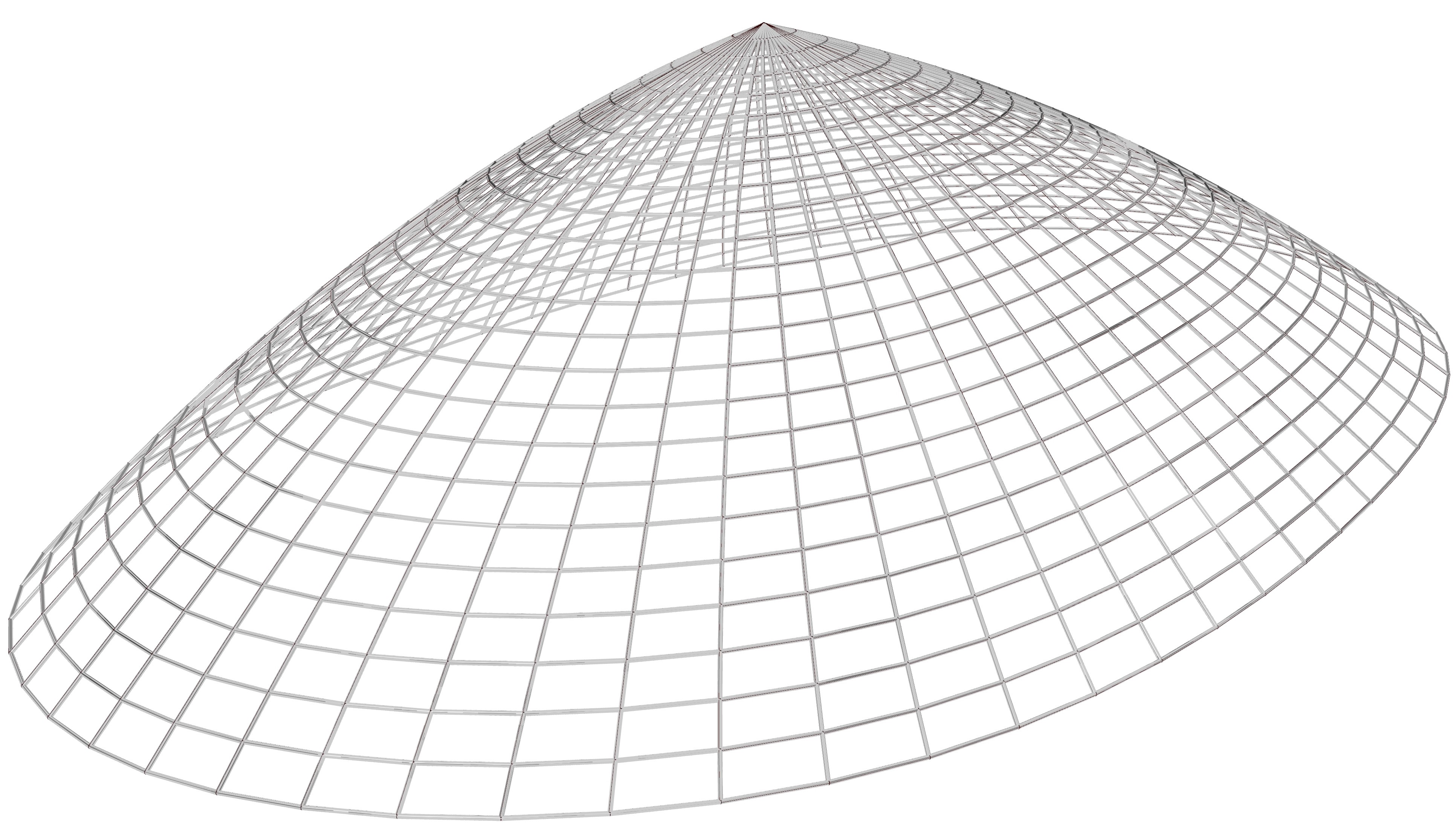}\quad\quad\quad\includegraphics[scale=0.156]{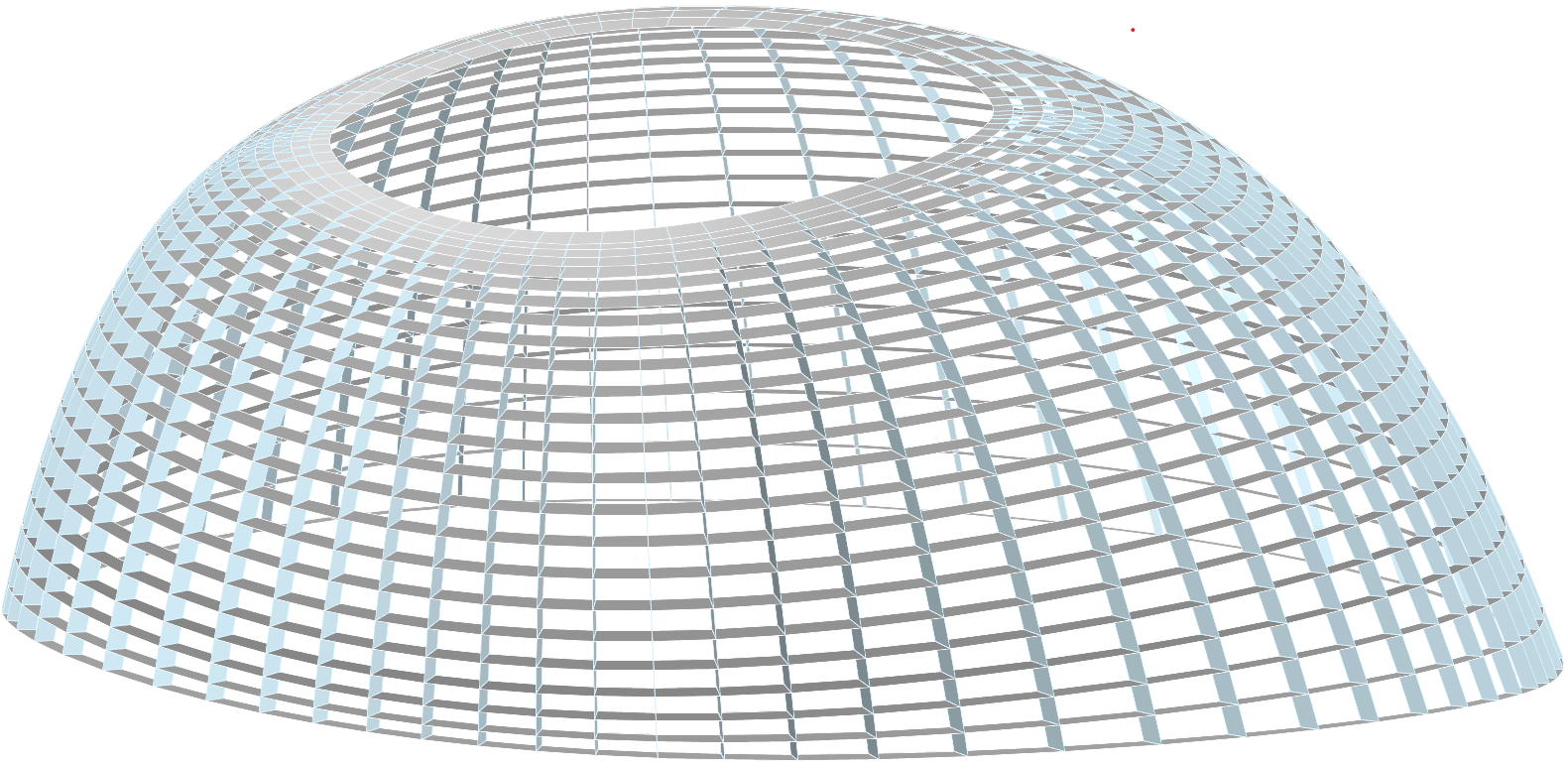}

    \ \ 
    \ \
    \ \
    \caption{A cone-cylinder net (left) and its metric dual (right). The latter is a flexible surface in isotropic geometry having planar discrete parameter lines, one family 
    lying in isotropic planes.
    }
    \label{fig:dualstructure}
\end{figure}

\subsection{Future research} In a subsequent paper, we will discuss flexible nets in $I^3$
in detail. As a conclusion from the present study, we know
already that the only nets with a visually smooth appearance
can be those of class (i). However, flexible mechanisms often
have a folding behavior and therefore we also need to address class
(ii). From a geometric perspective, it is interesting to
study the reciprocal parallel nets of flexible nets in $I^3$.
Their Euclidean counterparts (Bianchi surfaces in the smooth setting) have been addressed by Schief et al.~\cite{Schief2008}. On the
applications side, the main focus is on good ways to
control the shapes of flexible Q-nets in $I^3$  and on numerical optimization algorithms for the transition
to flexible Q-nets in Euclidean 3-space. Our initial numerical
experiments show that this transition works for Voss nets, and thus we are optimistic
that the more interesting general types can be handled as well. 
The isotropic flexible Q-nets are more general than just
isotropic counterparts to the
explicitly known Euclidean 
flexible $m \times n$ nets (Voss nets, T-nets and P-nets \cite{Nawratil2024}).  Possibly the simpler isotropic
versions lead to so far unknown explicit constructions
of certain types of Euclidean flexible Q-nets that do not require numerical optimization. 

The idea of initializing a numerical optimization algorithm
for the computation of a Euclidean structure by an isotropic
counterpart goes beyond the context of flexibility
and deserves to be investigated for other difficult problems as well. For instance, see \cite{MOROZOV2021, Yorov-etal}.

\subsubsection*{Acknowledgements}

The authors are grateful to Mekhron Bobokhonov, Ivan Izmestiev, Oleg Karpenkov,  Naimdzhon Khondzhonov, Niels Lubbes, Dmitry Lyakhov, Christian M\"uller, and Florian Rist for useful discussions. 

\bibliographystyle{siamplain}
\bibliography{references}

\end{document}